\newcommand{\BOX}{\ensuremath\Box}
\newtheorem{theorem}{Theorem}
\newtheorem{proposition}{Proposition}
\newtheorem{lemma}[proposition]{Lemma}
\theoremstyle{remark}
\newtheorem{remark}[proposition]{Remark}
\theoremstyle{definition}
\DeclareMathOperator{\supp}{supp}
\newcommand{\N}{\mathbb{N}}
\newcommand{\R}{\mathbb{R}}
\newcommand{\Z}{\mathbb{Z}}
\newcommand{\dd}{\,{\rm d}}
\newcommand{\ep}{\varepsilon}
\def\Xint#1{\mathchoice
{\XXint\displaystyle\textstyle{#1}}%
{\XXint\textstyle\scriptstyle{#1}}%
{\XXint\scriptstyle\scriptscriptstyle{#1}}%
{\XXint\scriptscriptstyle\scriptscriptstyle{#1}}%
\!\int}
\def\XXint#1#2#3{{\setbox0=\hbox{$#1{#2#3}{\int}$}
\vcenter{\hbox{$#2#3$}}\kern-.5\wd0}}
\def\dashint{\Xint-}
\definecolor{darkgreen}{rgb}{0,0.5,0}
\definecolor{darkblue}{rgb}{0,0,0.7}
\definecolor{darkred}{rgb}{0.9,0.1,0.1}
\definecolor{lightblue}{rgb}{0,0.51,1}
\newenvironment{proofx}[1]%
{\vskip\baselineskip\noindent\textbf{Proof of {#1}:}}%
{\hspace*{.1pt}\hspace*{\fill}\BOX\vskip\baselineskip}
{\vskip\baselineskip\noindent\textbf{Proof of Theorem \protect\ref{#1}:}}%
{\hspace*{.1pt}\hspace*{\fill}\BOX\vskip\baselineskip}
{\vskip\baselineskip\noindent\textbf{Proof of Lemma~\protect\ref{#1}:}}%
{\hspace*{.1pt}\hspace*{\fill}\BOX\vskip\baselineskip}
{\vskip\baselineskip\noindent\textbf{Proof of Proposition~\protect\ref{#1}:}}%
{\hspace*{.1pt}\hspace*{\fill}\BOX\vskip\baselineskip}
{\vskip\baselineskip\noindent\textbf{Proof of Theorems \protect\ref{#1} --
\protect\ref{#2}:}}%
{\hspace*{.1pt}\hspace*{\fill}\BOX\vskip\baselineskip}
\begin{document}

\title[Regularity for stationary flows over bumpy boundaries]{Regularity for the stationary Navier-Stokes equations over bumpy boundaries and a local wall law}

\author[M. Higaki]{Mitsuo Higaki$^\ast$}
\address[M. Higaki]{Kobe University, Kobe, Japan}
\email{higaki@math.kobe-u.ac.jp}

\author[C. Prange]{Christophe Prange}
\address[C. Prange]{Universit\'e de Bordeaux, CNRS, UMR [5251], IMB, Bordeaux, France}
\email{christophe.prange@math.u-bordeaux.fr}

\keywords{}
\subjclass[2010]{}
\date{\today}

\maketitle

\noindent {\bf Abstract.}\, We investigate regularity estimates for the stationary Navier-Stokes equations above a highly oscillating Lipschitz boundary with the no-slip boundary condition. Our main result is an improved Lipschitz regularity estimate at scales larger than the boundary layer thickness. We also obtain an improved $C^{1,\mu}$ estimate and identify the building blocks of the regularity theory, dubbed `Navier polynomials'. In the case when some structure is assumed on the oscillations of the  boundary, for instance periodicity, these estimates can be seen as local error estimates. Although we handle the regularity of the nonlinear stationary Navier-Stokes equations, our results do not require any smallness assumption on the solutions.

\vspace{0.3cm}

\noindent {\bf Keywords}\, Navier-Stokes equations, homogenization, boundary layers, compactness me\-thods, uniform Lipschitz estimates, improved regularity, large-scale regularity, wall laws, effective boundary conditions

\vspace{0.3cm}

\noindent {\bf Mathematics Subject Classification (2010)}\, 35B27 $\cdot$ 35B65 $\cdot$ 35Q30 $\cdot$ 76D03 $\cdot$ 76D05 $\cdot$ 76D10 $\cdot$ 76M50

\footnote[0]{$^\ast$Part of this work was done while the first author was a postdoctoral researcher at Universit\'e de Bordeaux.}

\small

\section{Introduction}

This paper is concerned with the local regularity of viscous incompressible fluid flows above rough bumpy boundaries $x_3>\ep\gamma(x'/\ep)$ with $\gamma$ Lipschitz and the no-slip boundary condition. Although bumpy boundaries  have a complicated geometry and low regularity, the flow may paradoxically be better behaved than for smooth or flat boundaries. It is well documented in the physical \cite{Jim04,Sch16} and the mathematical \cite{JM2,M1,GM,H} literature that roughness favors slip of the fluid on the boundary in certain regimes. In the striking paper \cite{FTBC06} it is even showed experimentally that roughness may delay the transition to turbulence. This also supports the idea that the vanishing viscosity limit from Navier-Stokes to Euler may be less singular above highly oscillating boundaries than above flat ones \cite{IS11,GLNR18,Ngu19}. 

Our goal is to investigate these effects, such as the enhanced slip, or the delay of the transition to turbulence, from the point of view of the regularity theory. Due in particular to vorticity creation at the boundary, the boundary regularity of fluid flows with the no-slip boundary conditions is delicate. In the nonstationary case, it is for instance not known whether there is an analogue of Constantin and Fefferman's \cite{CF} celebrated geometric regularity criteria for supercritical blow-up scenarios. For perfect slip or Navier-slip boundary conditions on the contrary, the situation is brighter. In particular an extension of the criteria of \cite{CF} is known in this case; see the work \cite{BDVB09} by Beir{\~{a}}o da Veiga and Berselli and \cite{Li19} by Li. We expect that fluids over bumpy boundaries have an intermediate behavior between these two extreme no-slip and (full-)slip situations, especially as far as the mesoscopic regularity properties are concerned.

Our approach grounds on the use of asymptotic analysis to prove regularity estimates. The success of such methods to prove the regularity to certain Partial Differential Equations is spectacular. One of the striking examples is that of homogenization. The basic idea is that the large-scale regularity is determined by the macroscopic properties of the systems, i.e. in the homogenization limit, while the small-scale regularity is determined by the regularity of the data (coefficients, boundary). Two approaches were developed: (a) blow-up and compactness arguments in periodic homo\-genization in the wake of the pioneering works \cite{AL1,AL2}, (b) quantitative arguments based on suboptimal local error estimates as developed for periodic homogenization \cite{Shen15,GS19,NSX19}, almost periodic homogenization \cite{AS16}, and stochastic homogenization \cite{GNO14,AKM19}.

In this work, we focus on the regularity for stationary problems. We consider the three-dimensional stationary Navier-Stokes equations
\begin{equation}\tag{NS$^\ep$}\label{intro.NS.ep}
\left\{
\begin{array}{ll}
-\Delta u^\ep+\nabla p^\ep=-u^\ep\cdot\nabla u^\ep&\mbox{in}\ B^\ep_{1,+}(0)\\
\nabla\cdot u^\ep=0&\mbox{in}\ B^\ep_{1,+}(0)\\
u^\ep=0&\mbox{on}\ \Gamma^\ep_1(0)\,,
\end{array}
\right.
\end{equation}
where the functions $u^\ep=u^\ep(x)=(u^\ep_1(x), u^\ep_2(x), u^\ep_3(x))^\top\in\R^3$ and $p^\ep=p^\ep(x)\in\R$ denote respectively the velocity field and the pressure field of the fluid. We have set for $\ep\in(0,1]$ and $r\in(0,1]$,
\begin{align}\label{intro.def1}
\begin{split}
&B^\ep_{r,+}(0) = \{x\in \R^3~|~x'\in(-r,r)^2\,, \ \ \ep\gamma(\frac{x'}{\ep})<x_3< \ep\gamma(\frac{x'}{\ep}) + r \}\,, \\
&\Gamma^\ep_r(0) = \{x\in \R^3~|~x'\in(-r,r)^2\,, \ \ x_3 = \ep\gamma(\frac{x'}{\ep})\}\,.
\end{split}
\end{align}
The boundary function $\gamma\in W^{1,\infty}(\R^2)$ is assumed to satisfy $\gamma(x')\in(-1,0)$ for all $x'\in\R^2$.

Our use of compactness arguments to tackle the regularity for solutions of \eqref{intro.NS.ep} is reminiscent of the pioneering work of Avellaneda and Lin \cite{AL1,AL2} in homogenization, and of the works by G\'{e}rard-Varet \cite{G1}, Gu and Shen \cite{GS15}, and Kenig and Prange \cite{KP1,KP2}. 
We separate the small-scale regularity, i.e. at scales $\lesssim \ep$, from the mescopic- or large-scale regularity, i.e. at scales $\ep\lesssim r\leq 1$. Concerning the small scales, the classical Schauder regularity theory for the Stokes and the Navier-Stokes equations was started by Lady\v{z}enskaja \cite{L} using potential theo\-ry and by Giaquinta and Modica \cite{GiMo} using Campanato spaces. These classical estimates require some smoothness of the boundary and typically depend on the modulus of continuity of $\nabla \gamma$ when the boundary is given by $x_3=\gamma(x')$. Therefore, these estimates degenerate for highly oscillating boundaries $x_3=\ep\gamma(x'/\ep)$ with sufficiently small $\ep\in(0,1)$. As for the large scales, on the contrary, the regularity is inherited from the limit system when $\ep\rightarrow 0$ posed in a domain with a flat boundary. Here no regularity is needed for the original boundary, beyond the boundedness of $\gamma$ and of its gradient. The mechanism for the regularity at small scales and at large scales is hence completely different.  Moreover, it is possible to prove, at the large scales, improved estimates that are known to be false at the small scales. An example of this is our large-scale Lipschitz estimate of Theorem \ref{theo.lip.nonlinear} below that is known to be false over a Lipschitz graph at the small scales even in the case of a linear elliptic operator \cite{KS11,KS11b,Shen15}.

Beyond improved regularity estimates, our objective is to develop local error estimates for the homogenization of viscous incompressible fluids over bumpy boundaries and derive local wall laws. 
The wall law catches an averaged effect from the $O(\ep)$-scale on large scale flows of order $O(1)$ through homogenization. In the wall law, a rough boundary is modeled as a smooth one and an appropriate condition is imposed on it reflecting the roughness of the original boundary. In typical situations, this process gives a Navier-type condition with slip length of $O(\ep)$, the so-called Navier wall law. This effective boundary condition reads for instance in two dimensions
\begin{equation}\label{e.walllaw}
u_1=\ep\alpha\partial_2u_1\,,\quad u_2=0\quad\mbox{on}\quad \partial\R^2_+
\end{equation}
with a constant $\alpha$ depending only on the boundary function $\gamma$. We now briefly review the literature concerned with the derivation of wall laws such as \eqref{e.walllaw} and the proof of error estimates in the global setting. The literature is vast and it is impossible to be exhaustive here. The wall law for simple stationary shear flows is analyzed in the pioneering work J\"{a}ger and Mikeli\'{c} \cite{JM1} when the boundary is periodic. This result is extended to a random setting by G\'erard-Varet \cite{G1} and to the almost periodic setting by G\'{e}rard-Varet and Masmoudi \cite{GM}. Nonstationary cases are studied in Mikeli\'{c}, Ne\u{c}asov\'{a}, and Neuss-Radu \cite{MNN} under the assumption that the limit flows are space-time $C^2$ functions. The strong regularity condition in \cite{MNN} implies that a careful analysis is needed when we study Initial Boundary Value Problems (IBVPs). Indeed, for these cases, no matter how regular the initial data are, there is the loss of regularity of solutions due to the boundary compatibility condition. Higaki \cite{H} considers an IBVP in a bumpy half-space and verifies the Navier wall law for $C^1$ initial data under natural compatibility conditions. A key ingredient is to make use of the $L^\infty$-regularity theory of the Navier-Stokes equations in the half-spaces; see Abe and Giga \cite{AG} for the analyticity of the Stokes semigroup in the $L^\infty$-type spaces. Theorem \ref{theo.lip.nonlinear.periodic} below provides a local counterpart of these global error estimates in the case of the stationary Navier-Stokes equations.

\subsection*{Outline and novelty of our results}

Our main results are given in the two theorems below. In Theorem \ref{theo.lip.nonlinear} we state a uniform Lipschitz estimate. In Theorem \ref{theo.lip.nonlinear.periodic} we give a local error estimate and identify the building blocks of the regularity theory. Both results hold for weak solutions of the nonlinear equations \eqref{intro.NS.ep} and hold without any smallness assumption on the size of the solutions.
%
\begin{theorem}[mesoscopic Lipschitz estimate]\label{theo.lip.nonlinear}
For all $M\in(0,\infty)$, there exists a constant $\ep^{(1)} \in(0,1)$ depending on $\|\gamma\|_{W^{1,\infty}(\R^2)}$ and $M$ such that the following statement holds. For all $\ep\in(0, \ep^{(1)}]$ and $r\in[\ep/\ep^{(1)}, 1]$, any weak solution $u^\ep \in H^1(B^\ep_{1,+}(0))^3$ to \eqref{intro.NS.ep} with
\begin{align}\label{est1.theo.lip.nonlinear}
\bigg(\dashint_{B^\ep_{1,+}(0)} |u^\ep|^2 \bigg)^\frac12 
\le
M
\end{align}
satisfies
\begin{align}\label{est2.theo.lip.nonlinear}
\bigg(\dashint_{B^\ep_{r,+}(0)} |u^\ep|^2 \bigg)^\frac12 
\le
C^{(1)}_M r\,,
\end{align}
where the constant $C^{(1)}_M$ is independent of $\ep$ and $r$, and depends on $\|\gamma\|_{W^{1,\infty}(\R^2)}$ and $M$. Moreover, $C^{(1)}_M$ is a monotone increasing function of $M$ and converges to zero as $M$ goes to zero.
\end{theorem}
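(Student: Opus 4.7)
\textit{Overall strategy.} The plan is to carry out an Avellaneda--Lin style large-scale regularity scheme, adapted to bumpy no-slip boundaries (as in \cite{KP1,G1}) and to the stationary Navier--Stokes nonlinearity. The argument has two pieces: a one-step improvement-of-flatness lemma at a fixed dyadic scale $\theta \in (0, 1/4)$ proved by compactness/contradiction, and an iteration by rescaling down to the boundary-layer thickness $\ep$.

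\textit{One-step improvement.} The key lemma is: there exist $\theta \in (0,1/4)$, depending only on universal constants, and, for every $M \in (0,\infty)$, constants $\ep_\star = \ep_\star(M, \|\gamma\|_{W^{1,\infty}}) \in (0,1)$ and $K_M \ge 0$ with $K_M \to 0$ as $M \to 0$, such that any solution of $-\Delta u^\ep + \nabla p^\ep = -\Lambda\, u^\ep \cdot \nabla u^\ep$ (with $\Lambda \in [0,1]$ a nonlinearity parameter, divergence-free constraint, and no-slip on $\Gamma^\ep_1(0)$) with $\ep \in (0, \ep_\star]$ and $\dashint_{B^\ep_{1,+}(0)}|u^\ep|^2 \le M^2$ satisfies
\begin{equation*}
\dashint_{B^\ep_{\theta,+}(0)}|u^\ep|^2 \,\le\, \theta^2 \max\!\Big\{\,\dashint_{B^\ep_{1,+}(0)}|u^\ep|^2,\ K_M^2\Big\}.
\end{equation*}
The proof is by contradiction. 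From a sequence of counter-examples $(\ep_n, \Lambda_n, u^{\ep_n})$ with $\ep_n \to 0$, a Caccioppoli-type energy inequality for stationary Navier--Stokes --- in which the local pressure is controlled by a Bogovski\u{\i}-type operator whose constants are stable under the $W^{1,\infty}$ bound on $\gamma$, and the cubic term is absorbed via the Sobolev embedding $H^1 \hookrightarrow L^6$ --- yields a uniform $H^1$ bound. After a suitable extension to the flat half-ball, passing to a weakly $H^1$, strongly $L^2$ convergent subsequence produces a limit $u^0$ on $B_1^+ := B_1 \cap \{x_3 > 0\}$. The strong $L^2$ convergence is enough to pass to the limit in the quadratic nonlinearity, so $u^0$ is a weak solution of the stationary Navier--Stokes system on $B_1^+$ with no-slip on $\{x_3 = 0\}$ and $\dashint_{B_1^+}|u^0|^2 \le M^2$. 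Classical boundary regularity on a flat no-slip boundary, bootstrapped via higher integrability of the gradient (Meyers) and half-space Schauder, gives $|u^0(x)| \le L_M\, x_3\,(\dashint_{B_1^+}|u^0|^2)^{1/2}$, hence $\dashint_{B_\theta^+}|u^0|^2 \le C\, L_M^2\, \theta^2\, \dashint_{B_1^+}|u^0|^2$. Choosing $\theta$ small relative to $L_M$, together with $K_M$ as a threshold below which no improvement is needed, contradicts the negation of the lemma.

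\textit{Iteration and conclusion.} The rescaling $v(y) := u^\ep(\theta y)/\theta$, $q(y) := p^\ep(\theta y)$, converts the equation into a system of the same form on a larger domain with the nonlinearity coefficient damped by the factor $\theta^2$; restricting to $B^{\ep/\theta}_{1,+}(0)$ and re-applying the one-step lemma yields, by induction,
\begin{equation*}
\dashint_{B^\ep_{\theta^k,+}(0)}|u^\ep|^2 \,\le\, \theta^{2k}\, \max\{M, K_M\}^2 \qquad \text{for every } k \text{ with } \theta^k \ge \ep/\ep_\star.
\end{equation*}
For arbitrary $r \in [\ep/\ep^{(1)}, 1]$ with $\ep^{(1)} := \theta\, \ep_\star$, sandwiching $r$ between consecutive powers $\theta^{k+1} < r \le \theta^k$ and using $B^\ep_{r,+}(0) \subset B^\ep_{\theta^k,+}(0)$ with a volume comparison yields \eqref{est2.theo.lip.nonlinear} with $C^{(1)}_M = C_\theta \max\{M, K_M\}$, which is monotone increasing in $M$ and vanishes as $M \to 0$.

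\textit{Main difficulty.} The principal technical obstacle is handling the Navier--Stokes nonlinearity without any smallness assumption. Both the uniform Caccioppoli bound on bumpy balls and, more seriously, the boundary Lipschitz regularity of the limit $u^0$ on the flat half-ball at the $L^2$ data level, require a delicate bootstrap --- through Meyers-type higher integrability of the gradient followed by half-space Schauder --- producing a Lipschitz constant $L_M$ that genuinely depends on $M$. This $M$-dependence is precisely the source both of the $M$-dependence of $C^{(1)}_M$ and of the asymptotic vanishing $C^{(1)}_M \to 0$ as $M \to 0$ asserted in the theorem, reflecting the perturbative character of the nonlinear regime at small data.
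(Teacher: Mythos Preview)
Your overall architecture---compactness to a flat half-space limit, then iterate by rescaling---is the same as the paper's, but the one-step lemma you formulate cannot be iterated as written. The issue is that your improvement-of-flatness estimate
\[
\dashint_{B^\ep_{\theta,+}(0)}|u^\ep|^2 \;\le\; \theta^2 \max\Big\{\dashint_{B^\ep_{1,+}(0)}|u^\ep|^2,\ K_M^2\Big\}
\]
carries a constant that genuinely depends on the a priori bound $M$ through the flat-boundary Lipschitz constant $L_M$; one has $K_M \simeq L_M M$ and there is no reason for $L_M\le 1$. After one rescaling the new a priori bound becomes $M_1=\max\{M,K_M\}$, and applying your lemma at that level introduces $K_{M_1}$, not $K_M$. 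Your stated induction $\dashint_{\theta^k}\le\theta^{2k}\max\{M,K_M\}^2$ therefore requires $K_{\max\{M,K_M\}}\le \max\{M,K_M\}$, which you do not (and cannot) verify. This is not a nonlinear artifact: the same obstruction already appears for Stokes, where the flat-boundary estimate reads $\dashint_{B_\theta^+}|u^0|^2\le C_0\theta^2\dashint_{B_1^+}|u^0|^2$ with a universal $C_0>1$, and naive iteration produces $C_0^{k}\theta^{2k}$ rather than $\theta^{2k}$. The $\theta^2$-damping of $\Lambda$ does not help here, since the loss of constant is present even at $\Lambda=0$. (There is also an internal inconsistency: you declare $\theta$ universal but then write ``choosing $\theta$ small relative to $L_M$''.)

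The paper closes this gap by \emph{subtracting} at each step the best linear approximation corrected by the boundary layer,
\[
\sum_{j=1}^{2} a^\ep_{k,j}\big(x_3{\bf e}_j+\ep v^{(j)}(x/\ep)\big),
\]
so that the remainder satisfies the no-slip condition exactly on $\Gamma^\ep$ and decays like $\theta^{1+\mu}$ (a genuine $C^{1,\mu}$ gain) rather than like $\theta$. After the natural rescaling $y=x/\theta^k$, the remainder then obeys the \emph{same} bound $M$ at every step, and the slopes $a^\ep_{k,j}$ are controlled separately by a geometric series via the Caccioppoli inequality. The $M$-dependence is absorbed once and for all by choosing $\theta=\theta(M)$ small enough to satisfy a finite list of algebraic constraints, after which the iteration runs with a fixed $M$. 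In short, the missing idea in your proposal is the subtraction of the affine-plus-corrector profile; without it the compactness constant is paid at every scale and the scheme does not close.
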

%
\begin{remark}\label{rem.theo.lip.periodic1}
(i) By using the Caccioppoli inequality in Appendix \ref{appendix.Caccioppoli}, one can easily prove
\begin{align*}
\bigg(\dashint_{B^\ep_{r,+}(0)} |\nabla u^\ep|^2 \bigg)^\frac12 
\le
\widetilde{C^{(1)}_M}
\end{align*}
for $r\in[\ep/\ep^{(1)}, \frac12]$. Here the constant $\widetilde{C^{(1)}_M}$ satisfies the same property as $C^{(1)}_M$. \\
(ii) In the paper \cite{G1}, G\'erard-Varet obtains a uniform H\"older estimate 
for weak solutions of the Stokes equations when $\gamma\in C^{1,\omega}(\R^2)$ for a fixed modulus of continuity $\omega$. Let us emphasize that there is a gap in difficulty between the uniform H\"older estimate (right-hand side of \eqref{est2.theo.lip.nonlinear} replaced by $Cr^\mu$ with $\mu\in(0,1)$) and the uniform Lipschitz estimate \eqref{est2.theo.lip.nonlinear}. Indeed the Lipschitz estimate requires the analysis of the boundary layer corrector. Moreover, let us emphasize that the Lipschitz estimate is the best that can be proved for $u^\ep$ uniformly in $\ep$. This comment does not contradict the uniform $C^{1,\mu}$ estimate below. Indeed the estimate in Theorem \ref{theo.lip.nonlinear.periodic} is a measure of the oscillation between $u^\ep$ and affine functions, and is not an estimate for $u^\ep$ directly.\\
(iii) As in the works \cite{AL1,G1,KP1} one can combine the mesoscopic regularity estimate with the classical regularity, provided the boundary is regular enough, i.e. when $\nabla\gamma$ is H\"older continuous. In that case, we can prove the full Lipschitz estimate $\|\nabla u^\ep\|_{L^\infty(B^\ep_{1,+}(0))}$ for \eqref{intro.NS.ep}. However, one cannot expect such an estimate to hold in Lipschitz domains even for the Laplace equation with the Dirichlet boundary condition.\\ 
(iv) There is a version of Theorem \ref{theo.lip.nonlinear} for the linear Stokes equations; see Theorem \ref{theo.lip} in Section \ref{sec.linear} below. An important application of such uniform Lipschitz estimates is for estimating the Green and Poisson kernels associated to the Stokes equations in the Lipschitz half-space $\{y_3>\gamma(y')\}$. Following \cite{AL1,AL3}, such estimates were proved for elliptic systems in bumpy domains in \cite{KP1}. Such estimates play a crucial role for the homogenization of boundary layer correctors, in particular in the works \cite{GM2,AKMP17,SZ18}.
\end{remark}
%

Next let us state the result which gives a local justification of the Navier wall law. 
The following theorem is concerned with the polynomial approximation of weak solutions to \eqref{intro.NS.ep} at mesoscopic scales. Remark \ref{rem.theo.lip.periodic2} below states consequences of the estimates in the theorem and Remark \ref{rem.theo.lip.periodic3} establishes the connection between our theorem and the Navier wall law.
%
\begin{theorem}[polynomial approximation]\label{theo.lip.nonlinear.periodic}
Fix $M\in(0,\infty)$ and $\mu\in(0,1)$. Then there exists a constant $\ep^{(2)}\in(0,1)$ depending on $\|\gamma\|_{W^{1,\infty}(\R^2)}$, $M$, and $\mu$ such that for all weak solutions $u^\ep \in H^1(B^\ep_{1,+}(0))^3$ to \eqref{intro.NS.ep} satisfying the bound \eqref{est1.theo.lip.nonlinear}, the following statements hold. 

\noindent {\rm (i)} 
For all $\ep\in(0,\ep^{(2)}]$ and $r\in[\ep/\ep^{(2)}, 1]$, we have
\begin{align}\label{est1.theo.lip.nonlinear.periodic}
\begin{split}
& \bigg(\dashint_{B^\ep_{r,+}(0)} 
\big|u^\ep(x) - \sum_{j=1}^{2} c^\ep_{r,j} x_3 {\bf e}_j\big|^2 \dd x \bigg)^\frac12 
\le
C^{(2)}_M (r^{1+\mu} + \ep^\frac12 r^\frac12)\,,
\end{split}
\end{align}
where the coefficient $c^\ep_{r,j}$, $j\in\{1,2\}$, is a functional of $u^\ep$ depending on $\ep$, $r$, $\|\gamma\|_{W^{1,\infty}(\R^2)}$, $M$, and $\mu$, while the constant $C^{(2)}_M$ is independent of $\ep$ and $r$, and depends on $\|\gamma\|_{W^{1,\infty}(\R^2)}$, $M$, and $\mu$. 

\noindent {\rm (ii)} We assume in addition that $\gamma\in W^{1,\infty}(\R^2)$ is $2\pi$-periodic in each variable. Then there exists a constant vector field $\alpha^{(j)} = (\alpha^{(j)}_1, \alpha^{(j)}_2, 0)^\top \in \R^3$, $j\in\{1,2\}$, depending only on $\|\gamma\|_{W^{1,\infty}(\R^2)}$ such that for all $\ep\in(0,\ep^{(2)}]$ and $r\in[\ep/\ep^{(2)}, 1]$, we have
\begin{align}\label{est2.theo.lip.nonlinear.periodic}
\begin{split}
& \bigg(\dashint_{B^\ep_{r,+}(0)} 
\big|u^\ep(x) - \sum_{j=1}^{2} c^\ep_{r,j} ( x_3 {\bf e}_j + \ep \alpha^{(j)} )\big|^2 \dd x \bigg)^\frac12 
\le
\widetilde{C^{(2)}_M} (r^{1+\mu} + \ep^\frac32 r^{-\frac12})\,,
\end{split}
\end{align}
where the coefficient $c^\ep_{r,j}$, $j\in\{1,2\}$, is same as in the estimate \eqref{est1.theo.lip.nonlinear.periodic}, while the constant $\widetilde{C^{(2)}_M}$ is independent of $\ep$ and $r$, and depends on $\|\gamma\|_{W^{1,\infty}(\R^2)}$, $M$, and $\mu$.
\end{theorem}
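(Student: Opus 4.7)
My plan is to run an Avellaneda--Lin-type compactness and iteration argument, descending dyadically from the unit scale to the boundary-layer scale $r \sim \ep$. Theorem \ref{theo.lip.nonlinear} enters twice: it supplies the uniform $L^2$-Lipschitz bound on $u^\ep$ at all dyadic scales $r \in [\ep/\ep^{(1)}, 1]$, and, crucially, it renders the nonlinearity $u^\ep \cdot \nabla u^\ep$ effectively small at the scale where the blow-up is performed.

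The core technical step is a one-step excess decay lemma: for every $\mu \in (0,1)$ there exist $\theta \in (0, \tfrac12)$ and $\ep_* > 0$ such that, whenever $\ep/r \leq \ep_*$ and $u^\ep$ is as in the statement,
\begin{align*}
& \inf_{c_1,c_2 \in \R}\bigg(\dashint_{B^\ep_{\theta r,+}(0)} \big|u^\ep - \sum_{j=1}^2 c_j x_3 \mathbf{e}_j\big|^2 \bigg)^{\!1/2} \\
& \qquad \leq \theta^{1+\mu} \inf_{c_1,c_2 \in \R}\bigg(\dashint_{B^\ep_{r,+}(0)} \big|u^\ep - \sum_{j=1}^2 c_j x_3 \mathbf{e}_j\big|^2 \bigg)^{\!1/2} + C (\ep r)^{1/2}.
\end{align*}
The lemma is proved by contradiction: if it fails, one extracts sequences $\ep_n / r_n \to 0$ and rescalings $v^n(y) = u^{\ep_n}(r_n y)/r_n$ obeying a rescaled Navier--Stokes system whose Reynolds coefficient is of order $r_n$, again by Theorem \ref{theo.lip.nonlinear}. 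The Caccioppoli inequality of Appendix \ref{appendix.Caccioppoli} and Rellich--Kondrachov yield a strong $L^2$ limit $v$, which is a weak solution of the \emph{linear} Stokes system on the flat half-space $\{y_3 > 0\}$ with no-slip data; the classical $C^{1,\mu}$ theory delivers the first-order approximation $\sum_j (\partial_{y_3} v_j)(0)\, y_3 \mathbf{e}_j$ on $B^+_\theta$, contradicting the assumption.

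Iterating this decay over $r = \theta^k$ as long as $\theta^k \geq \ep/\ep^{(2)}$ and summing the geometric errors produces the bound $C(r^{1+\mu} + \ep^{1/2} r^{1/2})$; the correction $\ep^{1/2} r^{1/2}$ arises as the sum $\sum_{j=0}^{k-1} \theta^{(k-1-j)(1+\mu)} (\ep \theta^j)^{1/2}$, dominated by its contribution at the minimal scale. Taking $c^\ep_{r,j}$ to be the infimum-realising constants in the excess at scale $r$ yields \eqref{est1.theo.lip.nonlinear.periodic}. For part (ii) in the periodic setting, I would refine the approximation using periodic boundary-layer correctors. For $j \in \{1,2\}$, let $(v^{(j)}, q^{(j)})$ solve the Stokes cell problem in the unbounded Lipschitz half-space $\{y_3 > \gamma(y')\}$ with boundary datum $v^{(j)}|_{y_3 = \gamma(y')} = -\gamma(y') \mathbf{e}_j$, so that $x_3 \mathbf{e}_j + \ep v^{(j)}(x/\ep)$ is an exact no-slip Stokes solution in the bumpy half-space. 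The $2\pi$-periodicity of $\gamma$ yields (via Saint-Venant-type estimates in the spirit of G\'erard-Varet--Masmoudi, and as developed in the linear section of the paper) exponential convergence $v^{(j)}(y) \to \alpha^{(j)}$ as $y_3 \to \infty$. Replacing the true corrector by its constant limit $\alpha^{(j)}$ introduces an $L^2$-error concentrated in a boundary strip of width $O(\ep)$, contributing $\ep^{3/2} r^{-1/2}$ to the average on $B^\ep_{r,+}$; re-running the one-step lemma with the improved reference $\sum_j c_j (x_3 \mathbf{e}_j + \ep \alpha^{(j)})$ and iterating gives \eqref{est2.theo.lip.nonlinear.periodic}.

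The main obstacle is the handling of the nonlinearity. Without a smallness hypothesis on $u^\ep$ at the unit scale, a naive compactness argument would produce a Navier--Stokes limit, reintroducing the very regularity difficulties one is trying to avoid. The way out is exactly the mesoscopic Lipschitz estimate of Theorem \ref{theo.lip.nonlinear}: because the normalised $L^2$-average of $u^\ep$ over $B^\ep_{r,+}(0)$ is bounded by $C_M^{(1)} r$, the effective Reynolds coefficient at the scale where compactness is invoked vanishes as the scale shrinks, the limit system is linear Stokes, and the classical $C^{1,\mu}$ theory applies.
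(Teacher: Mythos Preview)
Your overall architecture is reasonable, but the one-step excess decay lemma, as you have stated and argued it, has a genuine gap that the paper's approach is specifically designed to avoid.

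The issue is the \emph{multiplicative} factor $\theta^{1+\mu}$ on the right-hand side. To obtain such a factor by compactness one must, in effect, normalise by the excess at scale $r$ and run the argument on the difference $u^\ep-\sum_j c_j x_3\mathbf e_j$. On the bumpy boundary $\Gamma^\ep_r(0)$ this difference equals $-\sum_j c_j\,\ep\gamma(x'/\ep)\mathbf e_j\neq 0$, so neither the Caccioppoli inequality of Appendix~\ref{appendix.Caccioppoli} nor the Rellich compactness step applies to it. If instead you keep the compactness on $v^n=u^{\ep_n}(r_n\cdot)/r_n$ itself (as you describe), the contradiction does not close: when the limit $v$ happens to be \emph{exactly} a linear shear $\sum_j c_j^\ast y_3\mathbf e_j$, both excesses of $v$ vanish, and the limiting inequality $0\ge 0$ yields no contradiction. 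Nothing in your argument controls the rate at which the excess of $v^n$ at scale $\theta$ tends to zero relative to $n(\ep_n/r_n)^{1/2}$. (A secondary point: since $r_n$ need not tend to $0$ in the contradiction sequence, the limit system is in general the \emph{nonlinear} modified Navier--Stokes system, not linear Stokes; this is not fatal because Appendix~\ref{appendix.Regularity} covers that case, but it is not what you wrote.)

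The paper's remedy is precisely to carry the boundary layer corrector \emph{inside} the iteration: one approximates by $\sum_j a^\ep_{k,j}\bigl(x_3\mathbf e_j+\ep v^{(j)}(x/\ep)\bigr)$, which \emph{does} vanish on $\Gamma^\ep$, so that the rescaled difference $U^{\ep/\theta^k}$ satisfies the modified system \eqref{modified.NS.ep} with homogeneous no-slip data and $\|U^{\ep/\theta^k}\|\le M$ by the induction hypothesis. The compactness lemma (Lemma~\ref{lem.nonlinear.cpt}) is then an \emph{absolute} bound ``excess at $\theta$ $\le M\theta^{1+\mu}$'' rather than a multiplicative one, and the contradiction closes regardless of whether the limit is linear. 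The estimate \eqref{est1.theo.lip.nonlinear.periodic} is obtained only at the end, by stripping off $\ep v^{(j)}(x/\ep)$ via Lemma~\ref{lem.est.boundarylayer}; similarly \eqref{est2.theo.lip.nonlinear.periodic} follows by writing $v^{(j)}=\alpha^{(j)}+(v^{(j)}-\alpha^{(j)})$ and invoking the exponential decay of Proposition~\ref{prop.per.BL}. In particular, Theorem~\ref{theo.lip.nonlinear} is \emph{not} used as an input: both theorems flow from the same iteration Lemma~\ref{lem.nonlinear.itr}, and the nonlinearity is controlled not by a smallness coming from Theorem~\ref{theo.lip.nonlinear} but by choosing $\theta$ small depending on $M$ so that the Caccioppoli constants stay bounded through the induction.
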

%
\begin{remark}\label{rem.theo.lip.periodic2}
(i) Each of the constants $C^{(2)}_M$ and $\widetilde{C^{(2)}_M}$ satisfies the same property as $C^{(1)}_M$ in Theorem \ref{theo.lip.nonlinear} as functions of $M$. \\
(ii) Note that at the small scale, namely when $r=O(\ep)$, the right-hand side in the estimate \eqref{est1.theo.lip.nonlinear.periodic} is no better than the right-hand side of \eqref{est2.theo.lip.nonlinear} in Theorem \ref{theo.lip.nonlinear}. Hence there is no improvement at this scale. On the other hand, if we consider the case $r\in[(\ep/\ep^{(2)})^\delta, 1]$ with $\delta\in(0,1)$, then we see that
\begin{align*}
r^{1+\mu} + \ep^\frac12 r^{\frac12} 
\le (1 + (\ep^{(2)})^\frac12 r^{\frac{1-\delta}{2\delta}-\mu}) r^{1+\mu}\,.
\end{align*}
Therefore, we call the estimate \eqref{est1.theo.lip.nonlinear.periodic} a mesoscopic $C^{1,\mu}$ estimate at the scales $r\in[(\ep/\ep^{(2)})^\delta, 1]$ with $\delta\in(0,(2\mu+1)^{-1}]$.\\
(iii) A comparison between the estimates \eqref{est1.theo.lip.nonlinear.periodic} and \eqref{est2.theo.lip.nonlinear.periodic} highlights the regularity improvement coming from the boundary periodicity. Indeed, 
the estimate \eqref{est2.theo.lip.nonlinear.periodic} is sharper than \eqref{est1.theo.lip.nonlinear.periodic} at mesoscopic scales because $\ep^\frac32 r^{-\frac12} \le \ep^\frac12 r^\frac12$ holds whenever $r\in[\ep, 1]$. 
\end{remark}
%
%
\begin{remark}[relation with the wall law]\label{rem.theo.lip.periodic3}
(i) Let us denote the polynomial in \eqref{est2.theo.lip.nonlinear.periodic} by $P^\ep_{N,j}$, $j\in\{1,2\}$:
\begin{align}\label{polynomial.intro.}
P^\ep_{N,j}(x)=x_3 {\bf e}_j + \ep \alpha^{(j)}\,.
\end{align}
Then each $P^\ep_{N,j}$ is a shear flow in the half-space $\R^3_+$ and is an explicit solution to the  Navier-Stokes equations with a Navier-slip boundary condition
\begin{equation}\tag{NS$^\ep_N$}\label{Navier-slip.ep}
\left\{
\begin{array}{ll}
-\Delta u^\ep_N + \nabla p^\ep_N=-u^\ep_N \cdot \nabla u^\ep_N &\mbox{in}\ \R^3_+\\
\nabla\cdot u^\ep_N=0 &\mbox{in}\ \R^3_+\\
u^\ep_{N,3}=0 &\mbox{on}\ \partial\R^3_+\\
(u^{\ep}_{N,1}, u^{\ep}_{N,2})^\top
= \ep {\overline M} (\partial_3 u^{\ep}_{N,1}, \partial_3 u^{\ep}_{N,2})^\top &\mbox{on}\ \partial\R^3_+
\end{array}
\right.
\end{equation}
with a trivial pressure $p^\ep_{N}=0$. Here the $2\times2$ matrix ${\overline M}=(\alpha^{(j)}_i)_{1\le i,j\le 2}$ can be proved to be positive definite; see Proposition \ref{prop.per.BL} (ii). Thus the estimate \eqref{est2.theo.lip.nonlinear.periodic} in Theorem \ref{theo.lip.nonlinear.periodic} reads as follows: any weak solution $u^\ep$ to \eqref{intro.NS.ep} can be approximated at any mesoscopic scale by a linear combination of the Navier polynomials $P^\ep_{N,1}$ and $P^\ep_{N,2}$ multiplied by constants depending on $u^\ep$. This is a local version of the Navier wall law at the $O(\ep^\delta)$-scales, which has been widely studied in the global framework. \\
(ii) Our result can be extended to the stationary ergodic or the almost periodic setting. We also note that the wall law breaks down when the boundary does not have any structure at all; see \cite{GM}.
\end{remark}
%

The novelty of our results can be summarized as follows:

(I) Singular boundary: it is just Lipschitz and has no structure (except in Theorem \ref{theo.lip.nonlinear.periodic} {\rm (ii)}). 

(II) No smallness assumption on the size of solutions.

(III) Derivation of a local wall law and local error estimates.

As is stated in (I), one of the originalities of Theorem \ref{theo.lip.nonlinear} is that it does not rely on the smoothness of the boundary such as, the H\"older continuity of $\nabla\gamma$. Moreover, one cannot use any Fourier methods due to the lack of structure of the boundary. In fact, when working with Lipschitz boundaries, the classical Schauder theory is not applicable directly since there is no improvement of flatness coming from zooming on the boundary as is explained in \cite{KP2}. The smoothing happens at scales larger than that of the boundary layer thickness. 

Concerning point (II), 
we are able to remove any smallness assumption on the size of the solutions in Theorem \ref{theo.lip.nonlinear} and Theorem \ref{theo.lip.nonlinear.periodic}. This is in stark contrast with previous works concerned with the regularity of elliptic or Stokes systems \cite{AL1,G1,GS15,KP1,KP2}. Moreover, as far as we know the error estimates in the stationary global setting are all in the perturbative regime; see for instance \cite{GM}.

Point (III) is concerned with Theorem \ref{theo.lip.nonlinear.periodic}. It is important physically as well as mathematically since we are interested in the effects of rough boundaries on viscous fluids. Our result is a first-step toward understanding roughness effects on the Navier-Stokes flows in view of regularity improvement. As far as we know, estimate \eqref{est2.theo.lip.nonlinear.periodic} is the first justification of a local wall law.

These three aspects are further discussed in connection with our strategy in the paragraph below.

\subsection*{Difficulties and strategy}

The proof of Theorem \ref{theo.lip.nonlinear} and Theorem \ref{theo.lip.nonlinear.periodic} is based on a compactness argument as in \cite{KP1,KP2} originating from the works \cite{AL1, AL2} on uniform estimates in homogenization. In principle, we follow the strategy of \cite{KP2} concerned with the regularity theory of elliptic systems in bumpy domains. The main points in \cite{KP2} are: (1) construction of a boundary layer corrector in the Lipschitz half-space, (2) proof of the mesoscopic regularity by compactness and iteration. This strategy entails difficulties related to the lack of structure of the boundary which implies a lack of compactness of the solution to the boundary layer problem, and to the unavailability of Fourier me\-thods up to the boundary. In addition to these difficulties, our proof is more involved due to: (i) the vectoriality of the equations \eqref{intro.NS.ep} and the divergence-free condition, (ii) the nonlocal pressure, (iii) the nonlinearity of the Navier-Stokes equations and the lack of smallness of the solutions.

Concerning the first point, the (vectorial) divergence-free condition $\nabla\cdot u^\ep=0$ causes a difficulty in the compactness argument even for the Stokes equations; see Section \ref{sec.linear}, especially Lemma \ref{lem.cpt} and its proof. A key idea is that no boundary layer is needed on the vertical component of the velocity. Therefore the boundary layer corrector is naturally constructed as a divergence-free function.

Concerning the second point, let us stress a key difference between the stationary Navier-Stokes equations and the nonstationary ones. For the stationary Stokes equations imposed in a ball $B_1(0)$, one can estimate the pressure directly in terms of the velocity as follows:
\begin{equation}\label{e.estpressure}
\begin{split}
\big\|p- (\overline{p})_{B_1(0)}\big\|_{L^2(B_1(0))}
&\leq C\|\nabla p\|_{H^{-1}(B_1(0))}\\
&\leq C\|\Delta u\|_{H^{-1}(B_1(0))}\leq C\|\nabla u\|_{L^2(B_1(0))}.
\end{split}
\end{equation}
Similar estimates in balls intersecting the boundary and for the Navier-Stokes equations are intensively used in our paper. This is in strong contrast with the nonstationary Navier-Stokes equations where the pressure interacts with the time derivative of the velocity. This yields parasitic solutions, which are responsible for a lack of local smoothing in time and also for a more serious lack of local smoothing in space of the gradient of the velocity in the half-space; see Kang \cite{K} and Seregin and \v{S}ver\'{a}k \cite{SS}. 

The third aspect is partly related to (ii). In typical statements of the partial regularity theory for the nonstationary Navier-Stokes equations, one assumes smallness of certain scale-critical quantities in $\ep$ and hence one obtains linear equations in the limit $\ep\rightarrow 0$. Then the regularity theory for the linear equations yields a space-time H\"older regularity improvement for the original solution; see Lin \cite{Li}, Lady\v{z}enskaja and Seregin \cite{LS99}, and Mikhailov \cite{Mi} for example. However, for the stationary Navier-Stokes equations discussed in our paper, we do not need such a smallness condition; see Theorem \ref{theo.lip.nonlinear}. The limit equations when $\ep\rightarrow 0$ are not linear, but we can prove the smoothness of weak solutions because $H^1$ bounds are enough to control both the nonlinear term and pressure term in $L^2$ space (see Appendix \ref{appendix.Regularity} for details). Then bootstrapping using the standard elliptic regularity in a smooth domain leads to the (spatial) $C^\infty$-regularity for the limit equations. Estimate \eqref{e.estpressure} is the reason why one can bootstrap the regularity. Once the regularity is inherited at a fixed scale $\theta\in(0,1)$, a serious difficulty arises in the iteration of such an estimate. At each step in the induction, we need to use the Caccioppoli inequality from Appendix \ref{appendix.Caccioppoli} to control the norm $\|u^\ep\|_{L^2}$. A naive approach yields an estimate that depends algebraically on the size $M$ of $u^{\ep}$ as in \eqref{est1.theo.lip.nonlinear}. Hence the naive estimate becomes unbounded in $M$ as the iteration proceeds. This prevents one from closing the induction due to the lack of uniformity. We overcome this difficulty by choosing the free parameter $\theta$ in the compactness lemma in terms of the data $\gamma$ and $M$. This is done in the spirit of the Newton shooting method. We will make this idea precise in Section \ref{sec.nonlinear}. It should finally be emphasized that the boundary layer corrector, entering the scheme for the nonlinear Navier-Stokes equations \eqref{intro.NS.ep}, solves the linear Stokes equations. This is expected from the following formal heuristics. Indeed, in the boundary layer $u^\ep\simeq \ep v(x/\ep)$, so that $v$ solves $-\frac1\ep\Delta v+\ep v\cdot\nabla v+\nabla q=0$, $\nabla\cdot v=0$.

\subsection*{Outline of the paper} The following two sections are devoted to the analysis of the boundary layer equations. In Section \ref{sec.Preliminaries} we collect preliminary results on the well-posedness for the Stokes problem and on the Dirichlet-to-Neumann operator ${\rm DN}$ in the framework of non-localized Sobolev data. In Section \ref{sec.BL} we study the boundary layer equations by formulating equivalent equations on a strip bounded in the vertical direction and involving the nonlocal operator Dirichlet-to-Neumann ${\rm DN}$. Our goal is to prove the unique existence of solutions of the equivalent equations. We study the asymptotic behavior of the solution away from the boundary when the boundary is periodic in Subsection \ref{subsec.per.BL}. In Section \ref{sec.linear} we prove the linear version of Theorem \ref{theo.lip.nonlinear} in order to show how the compactness method works in the regularity argument. In Section \ref{sec.nonlinear} we prove the main results namely Theorem \ref{theo.lip.nonlinear} and Theorem \ref{theo.lip.nonlinear.periodic}. The regularity theory in a domain with a flat boundary and the Caccioppoli inequality are stated respectively in Appendices \ref{appendix.Regularity} and \ref{appendix.Caccioppoli}.

\subsection*{Notations} Let us summarize the notations in this paper for easy reference. For $x=(x_1, x_2, x_3)^\top\in\R^3$, we denote by $x'$ its tangential part $(x_1, x_2)^\top$. For $d\in\{2,3\}$ and $x, y\in\R^d$, we denote by $x\cdot y$ the inner product of $x$ and $y$. Then $|\cdot|$ denotes the corresponding norm in $\R^d$. For $r\in(0,1]$ and $\ep\in(0,1]$, we define $B^\ep_{r,+}(0)$ and $\Gamma^\ep_r(0)$ as is done in \eqref{intro.def1} and set 
\begin{align*}
&B_{r}(0) = \{x\in \R^3~|~x'\in(-r,r)^2\,, \ \ x_3\in(-r,r) \}=(-r,r)^3\,, \\
&B_{r,+}(0) = \{x\in \R^3~|~x'\in(-r,r)^2\,, \ \ x_3\in(0,r) \} \,, \\
&\Gamma_r(0) = \{x\in \R^3~|~x'\in(-r,r)^2\,, \ \ x_3 = 0\}\,.
\end{align*}
Note that formally we have $B_{r,+}(0)=B^0_{r,+}(0)$ and $\Gamma_r(0)=\Gamma^0_r(0)$. For an open set $\Omega\subset\R^3$ and a Lebesgue measurable function $f$ on $\Omega$, we set
\begin{align}\label{intro.def3}
\dashint_{\Omega} |f| &= \frac{1}{|\Omega|} \int_{\Omega} |f|\,, \qquad
(\overline{f})_{\Omega} = \frac{1}{|\Omega|} \int_{\Omega} f\,,
\end{align}
where $|\Omega|$ denotes the Lebesgue measure of $\Omega$. Finally, we define the Sobolev-Kato space $H^s_{uloc}(\R^2)$: let $\vartheta\in C^\infty_0(\R^2)$ be such that $\supp \vartheta \subset [-1,1]^2$, $\vartheta=1$ on $[-\frac14,\frac14]^2$, and 
\begin{align*}
\sum_{k\in \Z^2} \vartheta_k(x)=1\,,\quad x\in \R^2\,, 
\qquad \vartheta_k(x)=\vartheta(x-k)\,.
\end{align*}
Then, for $s\in[0,\infty)$, we define the space $H^s_{uloc}(\R^2)$ of functions of non-localized $H^s$ energy by
\begin{align*}
H^s_{uloc}(\R^2)
=\Big\{u\in H^s_{loc}(\R^2)~\Big|~ 
\sup_{k\in \Z^2}\|\vartheta_k u\|_{H^s(\R^2)}<\infty \Big\}
\end{align*}
and the space $L^2_{uloc}(\R^2)$ by $L^2_{uloc}(\R^2)=H^0_{uloc}(\R^2)$. We emphasize that $H^s_{uloc}(\R^2)$ is well-defined independently of the choice of $\vartheta$ for any $s\in[0,\infty)$ (see \cite[Lemma 7.1]{ABZ} for the proof) and admits the embedding $W^{1,\infty}(\R^2) \hookrightarrow H^s_{uloc}(\R^2)$ when $s\in[0,1)$.

Note that, since our interest is in the local boundary regularity of \eqref{intro.NS.ep}, the boundary condition is prescribed only on the lower part of $\partial B^\ep_{r,+}(0)$. We work in the framework of weak solutions of \eqref{intro.NS.ep}. A vector function $u^\ep\in H^1(B^\ep_{1,+}(0))^3$ is said to be a weak solution to \eqref{intro.NS.ep} if $u^\ep$ satisfies $\nabla\cdot u^\ep=0$ in the sense of distributions, $u^\ep|_{\Gamma^\ep_1(0)}=0$ in the trace sense, and 
\begin{align}\label{intro.def2}
\int_{B^\ep_{1,+}(0)} \nabla u^\ep \cdot \nabla \varphi
= -\int_{B^\ep_{1,+}(0)} (u^\ep\cdot\nabla u^\ep) \cdot \varphi
\end{align}
for any $\varphi\in C^{\infty}_{0,\sigma}(B^\ep_{1,+}(0))$. Here $C^\infty_{0,\sigma}(\Omega)$ denotes the space of test functions $\{ f \in C^\infty_0(\Omega)^3~|~\nabla\cdot f=0\}$ when $\Omega$ is an open set in $\R^3$. 
For the pressure $p^\ep$, we emphasize that the unique existence in $L^2(B^\ep_{1,+}(0))$ up to an additive constant can be proved in a functional analytic way using the weak formulation \eqref{intro.def2}; see a textbook \cite[Lemma 3.3.1 and Remark 3.3.2, III]{So} for details. 
%
\section{Preliminaries}\label{sec.Preliminaries}
%
\noindent In this section we give preliminary results which will be used in the next section. In Subsection \ref{subsec.half.prob} we prove a well-posedness result for the Stokes problem in the half-space with nonhomogeneous Dirichlet boundary data in $H^{\frac12}_{uloc}(\R^2)^3$. Subsection \ref{subsec.DN} is devoted to the definition and basic properties of the Dirichlet-to-Neumann operator on $H^{\frac12}_{uloc}(\R^2)^3$ associated with the half-space problem. Throughout this section, we use the Fourier transform and its inverse transform respectively defined by 
\begin{align*}
&\mathcal{F}[f](\xi)=\hat{f}(\xi)
=\int_{\R^2} f(x) e^{-ix\cdot\xi}\dd x\,,\quad \xi\in\R^2\,, \\
&\mathcal{F}^{-1}[f](x)
=\frac{1}{(2\pi)^2} \int_{\R^2} f(\xi) e^{ix\cdot\xi}\dd \xi\,,\quad x\in\R^2,
\end{align*}
for $f\in\mathcal S(\R^2)$. 
We also use their extensions on the space of tempered distributions $\mathcal{S}'(\R^2)$.
\subsection{Analysis of the half-space problem}\label{subsec.half.prob}
%
We consider the Stokes equations in the half-space $\R^3_+=\{y=(y',y_3)^\top\in\R^3~|~y_3>0\}$ with a non-localized boundary data $u_0\in H^{\frac12}_{uloc}(\R^2)^3$
\begin{equation}\tag{SH}\label{S.half}
\left\{
\begin{array}{ll}
-\Delta u+\nabla p=0&\mbox{in}\ \R^3_+ \\
\nabla\cdot u=0&\mbox{in}\ \R^3_+ \\
u=u_0&\mbox{on}\ \partial\R^3_+\,.
\end{array}
\right.
\end{equation}
The well-posedness of the problem \eqref{S.half} is stated as follows.
%
\begin{proposition}\label{prop.WP.S.half}
Let $u_0\in H^{\frac12}_{uloc}(\R^2)^3$. Then there exists a unique weak solution $(u,p)\in H^1_{loc}(\overline{\R^3_+})^3\times L^2_{loc}(\overline{\R^3_+})$ to \eqref{S.half} satisfying
\begin{align}\label{est1.prop.WP.S.half}
\sup_{\eta\in\Z^2}\int_{\eta+(0,1)^2}
\int_0^\infty 
|\nabla u(y',y_3)|^2 \dd y_3 \dd y'
\le
C \|u_0\|_{H^{\frac12}_{uloc}(\R^2)}^2\,,
\end{align}
where $C$ is a numerical constant.
\end{proposition}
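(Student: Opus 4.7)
The plan is to establish existence by Fourier analysis for localized data and then extend to $H^{\frac12}_{uloc}$ data via a near--far decomposition using the pointwise decay of the Stokes Poisson kernel; uniqueness will follow from an energy/Liouville argument combined with the local pressure bound \eqref{e.estpressure}.

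For $u_0\in H^{\frac12}(\R^2)^3$, I apply the partial Fourier transform in $y'$. For each $\xi\in\R^2$ the Stokes system becomes a constant-coefficient ODE in $y_3$ coupling $\hat u(\xi,y_3)$ and $\hat p(\xi,y_3)$; selecting the branch decaying at $y_3=\infty$ and using $\nabla\cdot u=0$ to close the boundary values yields
\begin{align*}
\hat u(\xi,y_3)=\bigl(\Idd+|\xi|y_3\,N(\xi)\bigr)e^{-|\xi|y_3}\hat u_0(\xi),
\end{align*}
with $N$ a bounded matrix symbol, and $p$ recovered from the momentum equation. Plancherel together with $\int_0^\infty(|\xi|^2+|\xi|^4y_3^2)e^{-2|\xi|y_3}\,\dd y_3\lesssim|\xi|$ gives $\|\nabla u\|_{L^2(\R^3_+)}^2\le C\int_{\R^2}|\xi||\hat u_0(\xi)|^2\dd\xi\le C\|u_0\|_{H^{\frac12}(\R^2)}^2$, which already implies \eqref{est1.prop.WP.S.half} in the localized case.

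For $u_0\in H^{\frac12}_{uloc}(\R^2)^3$, I fix $\eta\in\Z^2$ and split $u_0=u_0^{\mathrm n}+u_0^{\mathrm f}$, with $u_0^{\mathrm n}=\sum_{|k-\eta|\le 3}\vartheta_k u_0$. Then $\|u_0^{\mathrm n}\|_{H^{\frac12}(\R^2)}\le C\|u_0\|_{H^{\frac12}_{uloc}}$, so the first step controls the associated solution $u^{\mathrm n}$ globally in $\R^3_+$. For the far part $u^{\mathrm f}=\sum_{|k-\eta|\ge 4}u^k$, where $u^k$ solves \eqref{S.half} with data $\vartheta_k u_0$, I invert the symbol of the first step to obtain a Poisson kernel $K(y',y_3)$ homogeneous of degree $-2$ in $(y',y_3)$ and smooth away from the origin, hence $|\nabla K(y',y_3)|\le C(|y'|+y_3)^{-3}$. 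Since $\|\vartheta_k u_0\|_{L^1(\R^2)}\le C\|u_0\|_{H^{\frac12}_{uloc}}$ by compact support, one obtains, for $y'\in\eta+(0,1)^2$ and any $z'$ in the support of $\vartheta_k u_0$ with $|k-\eta|\ge 4$,
\begin{align*}
|\nabla u^k(y',y_3)|\le\frac{C\,\|u_0\|_{H^{\frac12}_{uloc}}}{(|k-\eta|+y_3)^{3}},
\end{align*}
which integrates to $\int_{\eta+(0,1)^2}\!\int_0^\infty|\nabla u^k|^2\,\dd y_3\dd y'\le C\|u_0\|_{H^{\frac12}_{uloc}}^2\,|k-\eta|^{-5}$. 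Minkowski's inequality and the summability of $\sum_{k\in\Z^2}|k-\eta|^{-5/2}$ (which holds since $5/2>2$) then produce \eqref{est1.prop.WP.S.half}.

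For uniqueness, the difference of two solutions is a weak solution with $u_0=0$ satisfying \eqref{est1.prop.WP.S.half}. A cutoff/Caccioppoli argument on a tangential cylinder of radius $R$, using the local pressure estimate \eqref{e.estpressure} to absorb the nonlocal pressure and the one-dimensional Poincar\'e inequality in $y_3$ (valid because $u$ vanishes on $\partial\R^3_+$) to bound $\|u\|_{L^2}$ by $\|\nabla u\|_{L^2}$ on vertical strips, produces a Saint--Venant-type decay chain that forces $u\equiv 0$. The main obstacle is the sharp gradient bound $|\nabla K|\le C(|y'|+y_3)^{-3}$: it yields the exponent $5/2$ needed for $\Z^2$-summability, and any weaker decay would break the scheme. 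It follows from the Fourier symbol constructed in the first step by a homogeneity rescaling and standard multiplier estimates, but it is the crucial quantitative input of the proof.
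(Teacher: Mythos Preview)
Your existence argument is essentially the paper's: both rely on the explicit Stokes Poisson kernel in $\R^3_+$ and its pointwise decay $|\nabla U(y)|\le C|y|^{-3}$. The paper writes the solution directly as a convolution with the kernel \eqref{def1.proof.prop.WP.S.half} and splits the $y_3$-integral at a fixed height to get the locally uniform bound, whereas you first pass through the Fourier symbol and then use a near--far decomposition in $y'$; these are two presentations of the same estimate and both are fine.

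The uniqueness sketch, however, has a genuine gap. You invoke ``the one-dimensional Poincar\'e inequality in $y_3$'' on vertical strips to control $\|u\|_{L^2}$ by $\|\nabla u\|_{L^2}$, but the strip is $(0,\infty)$ and no such inequality holds there: a function with $u(\cdot,0)=0$ and $\nabla u\in L^2$ can tend to a nonzero constant as $y_3\to\infty$. Likewise, the pressure bound \eqref{e.estpressure} is stated for bounded balls and does not transfer to infinite tangential cylinders, so the Caccioppoli step does not close. In the paper the Saint--Venant machinery is used only for the boundary layer problem on the \emph{bounded} strip $\Omega^{{\rm bl},-}$, precisely because Poincar\'e is available there; for the half-space the paper takes a different route. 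It first upgrades the $L^2_{uloc}$ gradient bound to all derivatives by interior/boundary regularity, so that $u(\cdot,y_3),\,p(\cdot,y_3)\in\mathcal S'(\R^2)$ for each $y_3$, then Fourier-transforms in $y'$ to obtain $(|\xi|^2-\partial_3^2)^2\hat u_3=0$. Testing against cut-offs vanishing near $\xi=0$ and using decay in $y_3$ forces $\operatorname{supp}\hat u_3\subset\{0\}$, hence $u_3$ is a polynomial in $y'$; the uniform gradient bound then makes $u_3=u_3(y_3)$. The same reasoning reduces $\nabla p$ to a function of $y_3$, after which $u'$ solves the Laplace equation with zero Dirichlet data and the Liouville theorem in the class \eqref{est1.prop.WP.S.half} gives $u'=0$; an ODE argument finishes $u_3=0$. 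If you want to salvage a Saint--Venant proof, you would need an additional ingredient replacing Poincar\'e on $(0,\infty)$ --- for instance, a quantitative decay of $u$ as $y_3\to\infty$ derived from the equations --- and a pressure estimate valid on unbounded cylinders; neither is supplied in your sketch.
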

%
\begin{remark}\label{rem.prop.WP.S.half}
The pressure $p$ can be chosen to satisfy
\begin{align*}
\sup_{\eta\in\Z^2}\int_{\eta+(0,1)^2}
\int_0^\infty 
|p(y',y_3)|^2 \dd y_3 \dd y'
\le
C \|u_0\|_{H^{\frac12}_{uloc}(\R^2)}^2\,.
\end{align*}
\end{remark}
%
\begin{proof} We follow the proof of \cite[Proposition 6]{GM} for the two-dimensional Stokes equations.

\noindent {\bf (Existence)} We give only the outline here since this part is parallel to \cite[Proposition 6]{GM}. Let $u_0\in H^{\frac12}_{uloc}(\R^2)^3$. Then a solution to \eqref{S.half} can be constructed by using the Poisson kernel $(U,P)$ as
\begin{align}\label{rep1.proof.prop.WP.S.half}
\begin{split}
u(y',y_3) &= \int_{\R^2} U(y'-\tilde{y}',y_3) u_0(\tilde{y}') \dd \tilde{y}'\,, \\
p(y',y_3) &= \int_{\R^2} \nabla P(y'-\tilde{y}',y_3) \cdot u_0(\tilde{y}') \dd \tilde{y}'\,,
\end{split}
\end{align}
where the kernels $U=U(y)$ and $P=P(y)$ are respectively defined by
\begin{align}\label{def1.proof.prop.WP.S.half}
\begin{split}
&U(y) = \frac{3y_3}{2\pi(|y'|^2+y_3^2)^\frac52} 
\begin{pmatrix}
y_1^2  & y_1 y_2 & y_1 y_3 \\
y_1 y_2  & y_2^2 & y_2 y_3 \\
y_1 y_3  & y_2 y_3 & y_3^2 \\
\end{pmatrix}\,, \qquad \\
&P(y) = -\frac{y_3}{\pi(|y'|^2+y_3^2)^\frac32}\,.
\end{split}
\end{align}
We easily check that $u$ and $p$ belong to $C^\infty(\R^3_+)$ by the derivative estimates of $(U,P)$
\begin{align*}
|\nabla^m U(y)| 
\le 
\frac{C_m y_3^{\delta_{0m}}}{(|y'|^2+y_3^2)^\frac{m+\delta_{0m}+2}{2}}\,, \qquad
|\nabla^m \nabla P(y)| 
\le
\frac{C_m}{(|y'|^2+y_3^2)^\frac{m+3}{2}}
\end{align*}
for $m\in\N\cup\{0\}$, which can be verified by direct computation. Here $\delta_{0m}$ denotes the Kronecker delta. Moreover, we can prove the following estimates for $a\in(0,\infty)$
\begin{align*}
&\sup_{\eta\in\Z^2}\int_{\eta+(0,1)^2}
\int_a^\infty 
\Big(|\nabla u(y',y_3)|^2 + |p(y',y_3)|^2 \Big) \dd y_3 \dd y' \\
& \quad
\le \frac{C}{a^5} \|u_0\|_{L^2_{uloc}(\R^{2})}^2\,, \\
&\sup_{\eta\in\Z^2}\int_{\eta+(0,1)^2}
\int_0^a
\Big(|\nabla u(y',y_3)|^2 + |p(y',y_3)|^2 \Big) \dd y_3 \dd y' \\
& \quad
\le
C \max\{1,a\} \|u_0\|_{H^{\frac12}_{uloc}(\R^{2})}^2\,,
\end{align*}
which lead to \eqref{est1.prop.WP.S.half}. We can also check that $(u,p)$ solves \eqref{S.half} and $u=u_0$ on $\partial\R^3_+$ in the trace sense.

\noindent {\bf (Uniqueness)} Suppose that $u_0=0$ in \eqref{S.half}. Then we aim at proving $u=0$ in the class 
\begin{align}\label{est0.proof.prop.WP.S.half}
\sup_{\eta\in\Z^2}\int_{\eta+(0,1)^2}
\int_0^\infty 
|\nabla u(y',y_3)|^2 \dd y_3 \dd y'
<\infty\,.
\end{align}
By the regularity theory of the Stokes equations and by the no-slip condition on $\partial\R^3_+$, we have
\begin{align}\label{est1.proof.prop.WP.S.half}
\begin{split}
& \sup_{\eta\in\Z^2}\int_{\eta+(0,1)^2}
\int_0^\infty 
\Big(|\nabla^m \nabla u(y',y_3)|^2 + |\nabla^m \nabla p(y',y_3)|^2 \Big) \dd y_3 \dd y' \\
\le
& 
C \sup_{\eta\in\Z^2}\int_{\eta+(0,1)^2}
\int_0^\infty 
|\nabla u(y',y_3)|^2 \dd y_3 \dd y'
\end{split}
\end{align}
for $m\in\N\cup\{0\}$ with a constant $C$ depending on $m$; see the proof of \cite[Proposition 6]{GM} for the two-dimensional case. Thus, for all fixed $y_3\in(0,\infty)$, we see that $u(\cdot,y_3)$ and $p(\cdot,y_3)$ belong to the space of tempered distributions $\mathcal{S}'(\R^2)$. Hence we can take the (partial) Fourier transform of \eqref{S.half} with $u_0=0$ in $y'$. By letting $\xi\in\R^2$ be the dual variable of $y'$, we have the equations
\begin{equation}\label{eq1.proof.prop.WP.S.half}
\left\{
\begin{array}{ll}
(|\xi|^2-\partial_3^2) \hat{u}'(\xi,y_3) + i\xi \hat{p}(\xi,y_3) = 0\,, &\xi\in \R^2 \\
(|\xi|^2-\partial_3^2) \hat{u}_3(\xi,y_3) + \partial_3 \hat{p}(\xi,y_3) = 0\,, &\xi\in \R^2 \\
i\xi \cdot \hat{u}'(\xi,y_3) + \partial_3 \hat{u}_3(\xi,y_3) = 0\,,&\xi\in \R^2 \\
\hat{u}(\xi,0) = 0\,. &\xi\in \R^2\,.
\end{array}
\right.
\end{equation}
By eliminating the pressure $\hat{p}(\xi,y_3)$ and using the divergence-free condition, we find
\begin{align}\label{eq2.proof.prop.WP.S.half}
(|\xi|^2-\partial_3^2)^2 \hat{u}_3(\xi,y_3) = 0 \ \ \mbox{in} \ \ \mathcal{S}'(\R^2)\,.
\end{align}
To avoid the singularity at $\xi=0$, we introduce a function $\varphi\in C^\infty_0(\R^2)$ satisfying $\varphi(\xi)=0$ in a neighborhood of $\xi=0$. Since $\varphi(\xi) \hat{u}_3(\xi,y_3)$ satisfies the equation \eqref{eq2.proof.prop.WP.S.half} replacing $\hat{u}_3(\xi,y_3)$ by $\varphi(\xi) \hat{u}_3(\xi,y_3)$, there exist compactly supported $A_i\in\mathcal{S}'(\R^2)$ and $B_i\in\mathcal{S}'(\R^2)$, $i\in\{1,2\}$, such that
\begin{align*}
\varphi(\xi) \hat{u}_3(\xi,y_3)
= (A_1 + y_3 A_2) e^{-|\xi| y_3} + (B_1 + y_3 B_2) e^{|\xi| y_3} \ \ \mbox{in} \ \ \mathcal{S}'(\R^2)\,.
\end{align*}
The integrability in the $y_3$ variable in \eqref{est1.proof.prop.WP.S.half} leads to $B_1 = B_2 = 0$, while the boundary conditions $\hat{u}_3(\xi,0)=\partial_3 \hat{u}_3(\xi,0)=0$ imply $A_1=A_2=0$. Hence we have $\varphi(\xi) \hat{u}_3(\xi,y_3)=0$ in $\mathcal{S}'(\R^2)$ for any cut-off function $\varphi\in C^\infty_0(\R^2)$ vanishing near the origin, which yields that $\hat{u}_3(\xi,y_3)$ is supported at $\xi=0$. Thus $u_3(y',y_3)$ is a polynomial in $y'$ with coefficients depending on $y_3$, and therefore, because of \eqref{est1.proof.prop.WP.S.half} for $m=0$, we see that $u_3(y',y_3)$ is in fact independent of $y'$:
\begin{align}\label{eq3.proof.prop.WP.S.half}
u_3(y',y_3)=u_3(y_3)\,.
\end{align}
On the other hand, since $\varphi(\xi) \hat{u}_3(\xi,y_3)=0$ in $\mathcal{S}'(\R^2)$, from the equations \eqref{eq1.proof.prop.WP.S.half}$_1$ and \eqref{eq1.proof.prop.WP.S.half}$_3$ we have $\varphi(\xi) |\xi|^2 \hat{p}(\xi,y_3)=0$ in $\mathcal{S}'(\R^2)$ for any cut-off function $\varphi\in C^\infty_0(\R^2)$ vanishing near $\xi=0$. Thus, by a similar reasoning as for $u_3$, we conclude that there exists a function $f=f(y_3)$ such that
\begin{align}\label{eq4.proof.prop.WP.S.half}
\nabla p(y',y_3)=(0,0,f(y_3))^\top\,.
\end{align}
Then, going back to the original equations \eqref{S.half} with $u_0=0$, we see that $u'(y',y_3)$ solves the Laplace equation with the Dirichlet boundary condition
\begin{equation*}
\left\{
\begin{array}{ll}
-\Delta u'=0&\mbox{in}\ \R^3_+ \\
u'=0&\mbox{on}\ \partial \R^3_+\,.
\end{array}
\right.
\end{equation*}
The Liouville theorem in the class \eqref{est0.proof.prop.WP.S.half} implies that $u'(y',y_3)$ is a constant vector field, and hence, $u'=0$ by the boundary condition. Hence the proof will be complete if we prove $u_3=0$. From $\Delta p=0$ following from \eqref{S.half}, the equality \eqref{eq4.proof.prop.WP.S.half} leads to $\nabla p(y',y_3)=(0,0,a)^\top$ with some $a\in\R$. After inserting \eqref{eq3.proof.prop.WP.S.half} and $\partial_3 p=a$ to \eqref{S.half}, we have $u_3(y_3)=ay_3^2 + by_3 + c $ with some $(b,c)\in\R^2$, which implies $u_3=\partial_3 p=0$ from \eqref{est1.proof.prop.WP.S.half} and the boundary condition. This completes the proof.
\end{proof}
%
\subsection{The Dirichlet-to-Neumann operator}\label{subsec.DN}
In this subsection we recall the definition and basic properties of the Dirichlet-to-Neumann operator ${\rm DN}$ on $H^{\frac12}_{uloc}(\R^2)^3$ associated with the Stokes equations \eqref{S.half}. We follow the procedure in \cite[Subsection 2.2]{GM} treating the two-dimensional problem; see also \cite{ABZ} studying the water-waves equations, \cite{DP} and \cite{KP2} for related studies. Before going into the details, we give a useful lemma for estimating elements in $H^{\frac12}_{uloc}(\R^2)$.
%
\begin{lemma}\label{lem.est.uloc}
Let $u_0\in H^{\frac12}_{uloc}(\R^2)$ and $\chi\in C^\infty_0(\R^2)$ with $\supp\chi\subset(-R,R)^2$ for some $R\in(0,\infty)$. Then we have $\chi u_0\in H^{\frac12}(\R^2)$ and
\begin{align*}
\|\chi u_0\|_{H^{\frac12}(\R^2)}
\le CR \|u_0\|_{H^{\frac12}_{uloc}(\R^2)}\,,
\end{align*}
where the constant $C$ depends only on $\|\chi\|_{W^{1,\infty}(\R^2)}$.
\end{lemma}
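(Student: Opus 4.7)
The plan is to decompose $\chi u_0$ via the partition of unity $\{\vartheta_k\}_{k\in\Z^2}$ used to define $H^{\frac12}_{uloc}(\R^2)$, bound each localized piece by a pointwise multiplier estimate, and then reassemble using the bounded-overlap structure of the supports. Concretely, I would set $K=\{k\in\Z^2 : \supp \vartheta_k\cap (-R,R)^2\neq\emptyset\}$. Since $\supp \vartheta_k\subset k+[-1,1]^2$, we have $|K|\le C(R+1)^2$ with $C$ absolute, hence $|K|\le CR^2$ in the relevant regime. Writing $\chi u_0=\sum_{k\in K}\chi(\vartheta_k u_0)$, the definition of $H^{\frac12}_{uloc}(\R^2)$ gives $\|\vartheta_k u_0\|_{H^{\frac12}(\R^2)}\le\|u_0\|_{H^{\frac12}_{uloc}(\R^2)}$ uniformly in $k$.

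Next I would establish the standard multiplier bound $\|\chi v\|_{H^{\frac12}(\R^2)}\le C\|\chi\|_{W^{1,\infty}(\R^2)}\|v\|_{H^{\frac12}(\R^2)}$ with $C$ absolute, through the Gagliardo characterization of $H^{\frac12}$: split
\begin{align*}
|\chi(x)v(x)-\chi(y)v(y)|^2\le 2\|\chi\|_{L^\infty}^2|v(x)-v(y)|^2+2|v(y)|^2|\chi(x)-\chi(y)|^2,
\end{align*}
and use $|\chi(x)-\chi(y)|^2\le \|\chi\|_{W^{1,\infty}}^2\min(4,|x-y|^2)$, so that after division by $|x-y|^3$ the residual kernel is integrable in $\R^2$. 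Applied with $v=\vartheta_k u_0$ this yields $\|\chi(\vartheta_k u_0)\|_{H^{\frac12}(\R^2)}\le C\|\chi\|_{W^{1,\infty}(\R^2)}\|u_0\|_{H^{\frac12}_{uloc}(\R^2)}$ uniformly in $k\in K$.

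The main obstacle is to reassemble the $|K|\sim R^2$ localized contributions into a single $H^{\frac12}$ norm without losing a factor $R^2$ (which the naive triangle inequality would). The key is the bounded-overlap inequality
\begin{align*}
\Big\|\sum_{k\in K}f_k\Big\|_{H^{\frac12}(\R^2)}^2\le C\sum_{k\in K}\|f_k\|_{H^{\frac12}(\R^2)}^2\qquad\text{for}\ \supp f_k\subset k+[-1,1]^2.
\end{align*}
The $L^2$-part of this is immediate. For the Gagliardo seminorm I would split the double integral into $\{|x-y|\le 1\}$, where both $x,y$ belong to $O(1)$ supports simultaneously, so Cauchy--Schwarz gives the pointwise bound $\big|\sum_{k\in K}(f_k(x)-f_k(y))\big|^2\le C\sum_{k\in K}|f_k(x)-f_k(y)|^2$, and $\{|x-y|>1\}$, where the crude estimate $|F(x)-F(y)|^2\le 2|F(x)|^2+2|F(y)|^2$ with $F=\sum_{k\in K}f_k$, combined with the integrability of $|x-y|^{-3}$ at infinity in $\R^2$, reduces the contribution to an $L^2$ estimate. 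Combining everything gives
\begin{align*}
\|\chi u_0\|_{H^{\frac12}(\R^2)}^2\le C|K|\,\|\chi\|_{W^{1,\infty}(\R^2)}^2\|u_0\|_{H^{\frac12}_{uloc}(\R^2)}^2\le CR^2\|u_0\|_{H^{\frac12}_{uloc}(\R^2)}^2,
\end{align*}
and taking square roots yields the claimed estimate, with $C$ depending only on $\|\chi\|_{W^{1,\infty}(\R^2)}$.
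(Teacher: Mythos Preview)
Your argument is correct. The paper itself does not give a proof of this lemma; it simply refers to \cite[Lemma 2.26]{DP}. Your route---decompose via the partition of unity $\{\vartheta_k\}$, apply the $W^{1,\infty}$ multiplier bound on $H^{\frac12}$ to each piece, and reassemble via the almost-orthogonality (bounded-overlap) inequality for the Gagliardo seminorm---is precisely the standard way to establish such localization estimates in $H^s_{uloc}$, and is in the spirit of the cited reference. One very minor remark: your count $|K|\le CR^2$ is valid only for $R\gtrsim 1$, as you note; for arbitrary $R\in(0,\infty)$ the argument yields $C(R+1)$ rather than $CR$. This is harmless here since the lemma is only invoked in the paper with $R>1$ (see the definition of $\mathrm{DN}$ in Subsection~\ref{subsec.DN}).
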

%

We refer to the proof of \cite[Lemma 2.26]{DP}.

%
\noindent Let ${\rm M}={\rm M}(\xi)$ and $\widetilde{{\rm M}}=\widetilde{{\rm M}}(\xi)$ be $3\times3$ matrices defined by
\begin{align}\label{def1.subsec.DN}
\begin{split}
&{\rm M}(\xi)
= \begin{pmatrix}
|\xi|+\xi_1^2 |\xi|^{-1} & \xi_1\xi_2 |\xi|^{-1} & i\xi_1 \\
\xi_1\xi_2 |\xi|^{-1} & |\xi|+\xi_2^2 |\xi|^{-1} & i\xi_2 \\
-i\xi_1 & -i\xi_2 & 2|\xi| 
\end{pmatrix}\,, \\
&\widetilde{{\rm M}}(\xi)
= \begin{pmatrix}
0 & 0 & i\xi_1 \\
0 & 0 & i\xi_2 \\
-i\xi_1 & -i\xi_2 & 0
\end{pmatrix}\,.
\end{split}
\end{align}
Here ${\rm M}$ is the symbol of the Dirichlet-to-Neumann operator of \eqref{S.half} on $H^{\frac12}(\R^2)^3$, while $\widetilde{{\rm M}}$ is the singular part of ${\rm M}$ because it is the Fourier transform of a derivative of a Dirac mass. Morever, let $K=K(y')$ be a $3\times3$ matrix defined by
\begin{align*}
K(y')=\mathcal{F}^{-1}[{\rm M}-\widetilde{{\rm M}}](y')\,,
\end{align*}
which must be understood in distributional sense: it is the inverse transform of a tempered distribution ${\rm M}-\widetilde{{\rm M}}$. From the theory of distributions, we see that $K$ is a function on $\R^2\setminus\{0\}$ satisfying
\begin{align}\label{est1.subsec.DN}
|K(y')| \le C|y'|^{-3}\,.
\end{align}
Then the operator ${\rm DN}$ on $H^{\frac12}_{uloc}(\R^2)^3$ of \eqref{S.half} is defined in the following manner. Fix $u_0\in H^{\frac12}_{uloc}(\mathbb R^2)^3$ and $R\in(1,\infty)$, and let $\chi\in C^\infty_0(\R^2)$ be a cut-off function such that
\begin{align*}
& \chi\in[0,1]\,, \quad
\supp\chi\subset (-R-2,R+2)^2\,, \quad \\
& \chi=1 \ \, {\rm in} \ \, [-R-1,R+1]^2\,, \quad 
\|\chi\|_{W^{1,\infty}(\R^2)} \le 2\,.
\end{align*}
Then we define ${\rm DN}(u_0)$ as a functional on the set of test functions $\varphi$ supported in $(-R,R)^2$
\begin{align}\label{def2.subsec.DN}
\begin{split}
\langle {\rm DN}(u_0), \varphi \rangle_{\mathcal{D}', \mathcal{D}}
& = \langle \mathcal{F}^{-1}[{\rm M} \widehat{\chi u_0}], \varphi \rangle_{H^{-\frac12}, H^{\frac12}} \\
&\quad
+ \int_{\R^2} \big(K\ast (1-\chi)u_0\big)(y') \cdot\varphi(y') \dd y'\,,
\end{split}
\end{align}
where $\langle\cdot, \cdot\rangle_{\mathcal{D}', \mathcal{D}}$ and 
$\langle\cdot, \cdot\rangle_{H^{-\frac12}, H^{\frac12}}$ respectively denote the duality product between $\mathcal{D}'(\R^2)^3$ and $\mathcal{D}(\R^2)^3=C^\infty_0(\R^2)^3$, and $H^{-\frac12}(\R^2)^3$ and $H^{\frac12}(\R^2)^3$. Moreover, $\ast$ denotes the usual convolution product. Let us emphasize that the singular part of the Dirichlet-to-Neumann operator $\widetilde{{\rm M}}$ does not appear in \eqref{def2.subsec.DN} because $\supp\varphi\cap\supp(1-\chi)=\emptyset$. Thanks to the properties of $\chi$ and $\varphi$, the second term in the right-hand side of \eqref{def2.subsec.DN} converges, and consequently, the operator ${\rm DN}$ gives an extension of the ``standard" Dirichlet-to-Neumann operator on $H^{\frac12}(\R^2)^3$. One can also check that ${\rm DN}$ is well-defined independently of the choice of $\chi$ in a similar manner as in \cite[Lemma 7]{GM}. We summarize the basic facts of ${\rm DN}$ as follows.
%
\begin{lemma}\label{lem.DN}
{\rm (i)} For $u_0\in H^{\frac12}_{uloc}(\R^2)^3$ and $\varphi\in C^\infty_0(\R^2)^3$ with $\supp\varphi\subset (-R,R)^2$, we have
\begin{align}\label{est1.lem.DN}
|\langle {\rm DN}(u_0), \varphi \rangle_{\mathcal{D}', \mathcal{D}}|
\le C R \|u_0\|_{H^{\frac12}_{uloc}(\R^2)} \|\varphi\|_{H^{\frac12}(\R^2)}\,,
\end{align}
where the constant $C$ is independent of $R\in(1,\infty)$. \\
\noindent {\rm (ii)} Let $u_0\in H^{\frac12}_{uloc}(\R^2)^3$ and let $\{u_{0,n}\}_{n=1}^\infty \subset H^{\frac12}(\R^2)^3$ with $\sup_{n\in\N} \|u_{0,n}\|_{L^2(\R^2)} <\infty$ satisfy
\begin{align}\label{assump1.lem.DN}
u_{0,n} \ \rightharpoonup \ u_0 \quad {\rm in} \ \ H^{\frac12}((-k,k)^2)
\end{align}
for all $k\in\N$. Then for $\varphi\in C^\infty_0(\R^2)^3$ with $\supp\varphi\subset (-R,R)^2$, we have
\begin{align}\label{conv1.lem.DN}
\lim_{n\to\infty} 
\langle {\rm DN}(u_{0,n}), \varphi \rangle_{\mathcal{D}', \mathcal{D}}
= \langle {\rm DN}(u_{0}), \varphi \rangle_{\mathcal{D}', \mathcal{D}}\,.
\end{align}
\noindent {\rm (iii)} For $u_0\in H^{\frac12}_{uloc}(\R^2)^3$ and $\varphi\in C^\infty_0(\overline{\R^3_+})^3$ with $\nabla \cdot \varphi=0$ and $\supp\varphi\subset B_{R}(0)$, we have
\begin{align}\label{rep2.lem.std.DN}
\langle {\rm DN}(u_0), \varphi|_{y_3=0} \rangle_{\mathcal{D}', \mathcal{D}}
= \int_{\R^3_+} \nabla u \cdot \nabla \varphi\,,
\end{align}
where $u\in H^1_{loc}(\overline{\R^3_+})^3$ is the weak solution to \eqref{S.half} with $u=u_0$ on $\partial \R^3_+$ provided by Proposition \ref{prop.WP.S.half}. In particular, if $u_0$ is nonzero and compactly supported, then we have
\begin{align}\label{est2.lem.DN}
\langle {\rm DN}(u_0), u_0 \rangle_{\mathcal{D}', \mathcal{D}} > 0\,.
\end{align}
\end{lemma}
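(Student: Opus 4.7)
The plan is to follow the template of \cite[Lemma 7]{GM}, where the analogous two-dimensional result is established, working with the decomposition of ${\rm DN}$ into its local (Fourier multiplier) and nonlocal (kernel convolution) parts given by \eqref{def2.subsec.DN}. For part (i), the first piece $\mathcal{F}^{-1}[{\rm M}\widehat{\chi u_0}]$ is controlled using that the symbol ${\rm M}(\xi)$ is homogeneous of degree one, so $\mathcal{F}^{-1}[{\rm M}\hat{\cdot}\,]$ maps $H^{\frac12}(\R^2)^3 \to H^{-\frac12}(\R^2)^3$ continuously; combined with Lemma \ref{lem.est.uloc} this yields a bound of the form $CR\|u_0\|_{H^{\frac12}_{uloc}(\R^2)}\|\varphi\|_{H^{\frac12}(\R^2)}$. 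For the second piece, I would decompose the domain $\{|\tilde y'|_\infty \ge R+1\}$ into unit squares $\eta + (0,1)^2$ indexed by $\eta \in \Z^2$, apply Cauchy--Schwarz on each square using the pointwise bound $|K(y')| \le C|y'|^{-3}$ from \eqref{est1.subsec.DN}, and sum the resulting convergent two-dimensional series; the gap $\dist(\supp\varphi, \supp(1-\chi)) \ge 1$ keeps the kernel regular, and $\int_{(-R,R)^2}|\varphi| \le CR\|\varphi\|_{H^{\frac12}(\R^2)}$ supplies the missing factor $R$.

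For part (ii), the compact support of $\chi$ and the assumption \eqref{assump1.lem.DN} on arbitrarily large cubes imply $\chi u_{0,n} \rightharpoonup \chi u_0$ in $H^{\frac12}(\R^2)^3$, so the Fourier multiplier piece converges by weak continuity in $H^{-\frac12}(\R^2)^3$. For the kernel piece, apply Fubini to rewrite the pairing as $(u_{0,n}, \Psi)_{L^2(\R^2)}$ with $\Psi(\tilde y') = (1-\chi(\tilde y')) \int_{\R^2} K(y'-\tilde y')^\top \varphi(y') \dd y'$. The compact support of $\varphi$ and the decay of $K$ yield $|\Psi(\tilde y')| \lesssim (1+|\tilde y'|)^{-3}$, so $\Psi \in L^2(\R^2)^3$. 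Splitting the pairing into a large cube $(-k,k)^2$ and its complement, on the cube Rellich upgrades weak $H^{\frac12}$ convergence to strong $L^2$ convergence; on the complement, the uniform $L^2(\R^2)$-bound on $u_{0,n}$ together with $\|\Psi\|_{L^2(\{|\tilde y'|>k\})} \to 0$ as $k\to\infty$ closes the argument.

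For part (iii), I would first treat the subcase $u_0 \in H^{\frac12}(\R^2)^3$ compactly supported, where one may take $\chi \equiv 1$ on $\supp u_0$ so \eqref{def2.subsec.DN} reduces to ${\rm DN}(u_0) = \mathcal{F}^{-1}[{\rm M}\hat{u}_0]$. The identity \eqref{rep2.lem.std.DN} then becomes the classical $H^{\frac12}$-Stokes DN identity, proved by integrating $\int_{\R^3_+} \nabla u \cdot \nabla \varphi$ by parts, using $\nabla \cdot \varphi = 0$ to kill the pressure bulk term, and verifying on the Fourier side via the Poisson kernel \eqref{def1.proof.prop.WP.S.half} that the boundary expression $\partial_3 u_i - \delta_{i3} p$ on $\{y_3=0\}$ assembles into ${\rm M}(\xi) \hat{u}_0(\xi)$. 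For general $u_0 \in H^{\frac12}_{uloc}(\R^2)^3$, split $u = u_1 + u_2$ corresponding to $u_0 = \chi u_0 + (1-\chi)u_0$: the previous case handles $u_1$, while for $u_2$ the separation between $\supp(1-\chi)u_0$ and $\supp\varphi|_{y_3=0}$ annihilates the singular symbol $\widetilde{{\rm M}}$, and the explicit Poisson kernel yields $\int_{\R^3_+} \nabla u_2 \cdot \nabla \varphi = \int_{\R^2} K*((1-\chi)u_0) \cdot \varphi|_{y_3=0}$, matching the second term of \eqref{def2.subsec.DN}.

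The strict positivity \eqref{est2.lem.DN} would then be obtained by approximating $u$ by admissible divergence-free test fields: pick a 3D cut-off $\chi_R$ supported in $B_{2R}(0)$ and equal to $1$ on $B_R(0)$, and correct the divergence of $\chi_R u$ using Bogovski\u{\i}'s operator on $(B_{2R}(0)\setminus B_R(0))\cap \R^3_+$, producing $\varphi_R \in C^\infty_0(\overline{\R^3_+})^3$ divergence-free with $\varphi_R|_{y_3=0}=u_0$. The polynomial decay of $u$ inherited from the Poisson kernel \eqref{def1.proof.prop.WP.S.half} makes the Bogovski\u{\i} corrector vanish as $R \to \infty$, so \eqref{rep2.lem.std.DN} in the limit yields $\langle {\rm DN}(u_0), u_0\rangle = \int_{\R^3_+}|\nabla u|^2$, which is strictly positive since otherwise $u$ would be constant and $u_0$ would vanish. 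The main obstacle I anticipate is the rigorous verification in part (iii) that the nonlocal piece $K$ indeed represents the trace of $\partial_3 u_2 - p_2 e_3$ when the data and test function have disjoint supports (so that $\widetilde{{\rm M}}$ genuinely drops out), together with the quantitative decay estimates on $u$ needed to make the Bogovski\u{\i}-based approximation viable for the positivity claim.
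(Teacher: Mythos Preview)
Your treatment of parts (i), (ii), and the identity in (iii) follows essentially the same decomposition and estimates as the paper: the Fourier multiplier piece is handled via the $H^{\frac12}\to H^{-\frac12}$ boundedness of ${\rm M}$ together with Lemma~\ref{lem.est.uloc}, the kernel piece via the pointwise decay \eqref{est1.subsec.DN} and a Cauchy--Schwarz splitting, and the identity \eqref{rep2.lem.std.DN} via the split $u_0=\chi u_0+(1-\chi)u_0$ with corresponding solutions $V,W$.

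The one genuine divergence is in the positivity \eqref{est2.lem.DN}. The paper's argument is considerably shorter than yours: since $u_0$ is compactly supported, one simply enlarges $R$ so that $\chi u_0=u_0$ and $(1-\chi)u_0\equiv0$, whence the definition \eqref{def2.subsec.DN} collapses to
\[
\langle {\rm DN}(u_0), u_0 \rangle_{\mathcal{D}', \mathcal{D}}
= \langle \mathcal{F}^{-1}[{\rm M} \widehat{u_0}], u_0 \rangle_{H^{-\frac12}, H^{\frac12}}\,,
\]
and positivity follows by an elementary check that the Hermitian symbol ${\rm M}(\xi)$ is positive definite for $\xi\neq0$. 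Your route via cut-off plus Bogovski\u{\i} corrector to obtain $\langle{\rm DN}(u_0),u_0\rangle=\int_{\R^3_+}|\nabla u|^2$ is correct in principle and has the conceptual appeal of identifying the quadratic form with a Dirichlet energy, but it requires you to produce a Bogovski\u{\i}-type operator on the half-annulus whose output has vanishing trace on $\{y_3=0\}$ (otherwise $\varphi_R|_{y_3=0}\neq u_0$), together with the decay estimates you mention. These are the technicalities you yourself flag as the main obstacle; the paper avoids them entirely by staying on the Fourier side.
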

%
\begin{proof}
(i) The first term in the right-hand side of \eqref{def2.subsec.DN} is estimated as
\begin{align}\label{est1.proof.lem.DN}
\begin{split}
|\langle \mathcal{F}^{-1}[{\rm M} \widehat{\chi u_0}], \varphi \rangle_{H^{-\frac12}, H^{\frac12}}|
& \le C \|\chi u_0\|_{H^{\frac12}(\R^2)} \|\varphi\|_{H^{\frac12}(\R^2)} \\
& \le C R \|u_0\|_{H^{\frac12}_{uloc}(\R^2)} \|\varphi\|_{H^{\frac12}(\R^2)}\,,
\end{split}
\end{align}
where Lemma \ref{lem.est.uloc} is applied in the second line. From the definition we have
\begin{align*}
& \int_{\R^2} \big(K\ast (1-\chi)u_0\big)(y') \cdot\varphi(y') \dd y' \\
&= 
\int_{\R^2} \int_{\R^2}
K(y'-\tilde{y}') (1-\chi(\tilde{y}'))u_0(\tilde{y}') \cdot\varphi(y') \dd \tilde{y}' \dd y'\,.
\end{align*}
By using \eqref{est1.subsec.DN} and the properties of $\chi$ and $\varphi$, 
we estimate the second term in the right-hand side of \eqref{def2.subsec.DN} as
\begin{align}\label{est2.proof.lem.DN}
\begin{split}
&\bigg|\int_{\R^2} \big(K\ast (1-\chi)u_0\big)(y') \cdot\varphi(y') \dd y'\bigg| \\
&\le 
C\int_{\R^2} \int_{\R^2}
\frac{1-\chi(\tilde{y}')}{|y'-\tilde{y}'|^3}
|u_0(\tilde{y}')| |\varphi(y')| \dd \tilde{y}' \dd y' \\
&\le 
C \int_{\R^2}
\bigg(\int_{\R^2} \frac{1-\chi(\tilde{y}')}{|y'-\tilde{y}'|^3} \dd \tilde{y}' \bigg)^{\frac12}
\bigg(\int_{\R^2} \frac{1-\chi(\tilde{y}')}{|y'-\tilde{y}'|^3} 
|u_0(\tilde{y}')|^2 \dd \tilde{y}' \bigg)^{\frac12}
|\varphi(y')| \dd y'\\
&\le
C \|u_0\|_{L^2_{uloc}(\R^2)}
\int_{\R^2} |\varphi(y')| \dd y' \\
&\le 
CR \|u_0\|_{L^2_{uloc}(\R^2)} \|\varphi\|_{L^2(\R^2)}\,,
\end{split}
\end{align}
which with \eqref{est1.proof.lem.DN} implies the desired estimate \eqref{est1.lem.DN}. \\
\noindent (ii) From the assumption \eqref{assump1.lem.DN} we see that
\begin{align}\label{conv1.proof.lem.DN}
\lim_{n\to\infty} \langle \mathcal{F}^{-1}[{\rm M} \widehat{\chi u_{0,n}}], \varphi \rangle_{H^{-\frac12}, H^{\frac12}}
= \langle \mathcal{F}^{-1}[{\rm M} \widehat{\chi u_0}], \varphi \rangle_{H^{-\frac12}, H^{\frac12}}\,.
\end{align}
Fix $k\in\N$ arbitrarily. Then again from \eqref{assump1.lem.DN} we have 
\begin{align}\label{conv2.proof.lem.DN}
\lim_{n\to\infty} \int_{\R^2} \int_{|\tilde{y}'|\le k}
K(y'-\tilde{y}') (1-\chi(\tilde{y}'))
\big(u_{0,n}(\tilde{y}') - u_0(\tilde{y}')\big)\cdot\varphi(y') \dd \tilde{y}' \dd y' 
= 0\,.
\end{align}
On the other hand, if we choose $k\in\N$ to satisfy $k>\max\{R+2,2R\}$, then $\chi(\tilde{y}')=0$ and $2|y'| < |\tilde{y}'|$ as long as $|\tilde{y}'| > k$ and $|y'|\le R$. Thus, in a similar way as in \eqref{est2.proof.lem.DN}, we see that
\begin{align}\label{est3.proof.lem.DN}
\begin{split}
&\bigg|\int_{\R^2} \int_{|\tilde{y}'|> k}
K(y'-\tilde{y}') (1-\chi(\tilde{y}'))
\big(u_{0,n}(\tilde{y}') - u_0(\tilde{y}')\big)\cdot\varphi(y') \dd \tilde{y}' \dd y'\bigg| \\
&\le 
C \big(\sup_{n\in\N}\|u_{0,n}\|_{L^2(\R^2)}+ \|u_0\|_{L^2_{uloc}(\R^2)}\big)
\int_{\R^2}
\bigg(\int_{|\tilde{y}'|> k} \frac{\dd \tilde{y}'}{|\tilde{y}'|^3} \bigg)^{\frac12}
|\varphi(y')| \dd y'\\
&\le
\frac{C}{k^\frac12} \big(\sup_{n\in\N}\|u_{0,n}\|_{L^2(\R^2)}+ \|u_0\|_{L^2_{uloc}(\R^2)}\big)\,.
\end{split}
\end{align}
Then from \eqref{conv2.proof.lem.DN} and \eqref{est3.proof.lem.DN} we have
\begin{align*}
\lim_{n\to\infty} \int_{\R^2} \int_{\R^2}
K(y'-\tilde{y}') (1-\chi(\tilde{y}'))
\big(u_{0,n}(\tilde{y}') - u_0(\tilde{y}')\big)\cdot\varphi(y') \dd \tilde{y}' \dd y' 
= 0\,,
\end{align*}
which with \eqref{conv1.proof.lem.DN} implies the assertion \eqref{conv1.lem.DN}. \\
\noindent (iii) Since the both sides of \eqref{rep2.lem.std.DN} are continuous with respect to $u_0\in H^{\frac12}_{uloc}(\R^2)^3$, it suffices to prove it for smooth $u_0$ with all derivatives bounded. Let $V$ and $W$ denote the solutions to \eqref{S.half} with the boundary data $\chi u_0$ and $(1-\chi) u_0$. Then we have $u=V+W$ and one can check that
\begin{align*}
\begin{split}
&\int_{\R^3_+} \nabla V \cdot \nabla \varphi 
=  \langle \mathcal{F}^{-1}[{\rm M} \widehat{\chi u_0}], \varphi \rangle_{H^{-\frac12}, H^{\frac12}}\,, \\
&\int_{\R^3_+} \nabla W \cdot \nabla \varphi 
= \int_{\R^2} \big(K\ast (1-\chi)u_0\big)(y') \cdot\varphi(y') \dd y'\,.
\end{split}
\end{align*}
Thus we obtain \eqref{rep2.lem.std.DN}. If $u_0$ has a compact support, by choosing $R\in(0,\infty)$ sufficiently large, we see that $\chi u_0=u_0$ holds and $(1-\chi)u_0$ is identically zero. Then from the definition of ${\rm DN}$ we have
\begin{align*}
\langle {\rm DN}(u_0), u_0 \rangle_{\mathcal{D}', \mathcal{D}}
= \langle \mathcal{F}^{-1}[{\rm M} \widehat{u_0}], u_0 \rangle_{H^{-\frac12}, H^{\frac12}}\,.
\end{align*}
By the definition of ${\rm M}$ in \eqref{def1.subsec.DN}, one can easily check that the right-hand side is positive as long as $u_0$ is nonzero. Hence we conclude \eqref{est2.lem.DN}. The proof of Lemma \ref{lem.DN} is complete.
\end{proof}
%
\section{The boundary layer corrector}\label{sec.BL}
%
\noindent In this section we study the boundary layer equations in Lipschitz half-spaces. In Subsection \ref{subsec.general} we state the problem and introduce its equivalent formulation imposed in an infinite channel but invol\-ving the Dirichlet-to-Neumann operator ${\rm DN}$ of Subsection \ref{subsec.DN}. This formulation allows us to apply the Poincar\'e inequality in the vertical direction when estimating the local energy. In Subsection \ref{subsec.proof.BL} we prove the unique solvability of the equivalent equations in a similar way to \cite[Proposition 10]{GM} and \cite[Section 5]{KP2} via local energy estimates. The asymptotic behavior of the boundary layer corrector is investigated in the case when the boundary is periodic in Subsection \ref{subsec.per.BL}. We also collect norm estimates for the boundary layer correctors in Subsection \ref{subsec.useful.BL}, which will be useful in Sections \ref{sec.linear} and \ref{sec.nonlinear}.
%
\subsection{General case}\label{subsec.general}
We consider the boundary layer equations for $j\in\{1,2\}$
\begin{equation}\tag{BL$^{(j)}$}\label{BLj}
\left\{
\begin{array}{ll}
-\Delta v+\nabla q=0\,, &y\in\Omega^{\rm bl} \\
\nabla\cdot v=0\,, &y\in\Omega^{\rm bl} \\
v(y',\gamma(y'))=-\gamma(y') {\bf e}_j\,,
\end{array}
\right.
\end{equation}
where $\gamma\in W^{1,\infty}(\R^2)$ and $\Omega^{\rm bl}$ denotes the Lipschitz half-space $\Omega^{\rm bl}=\{y\in\R^3~|~\gamma(y')<y_3<\infty\}$. The unique existence of weak solutions to \eqref{BLj} is stated as follows.
%
\begin{proposition}\label{prop.BL}
Fix $j\in\{1,2\}$ and let $\gamma\in W^{1,\infty}(\R^2)$. Then there exists a unique weak solution $(v,q)=(v^{(j)},q^{(j)})\in H^1_{loc}(\overline{\Omega^{\rm bl}})^3\times L^2_{loc}(\overline{\Omega^{\rm bl}})$ to \eqref{BLj} satisfying
\begin{align}\label{est1.prop.BL}
\sup_{\eta\in\Z^2}\int_{\eta+(0,1)^2}
\int_{\gamma(y')}^\infty 
|\nabla v^{(j)}(y',y_3)|^2 \dd y_3 \dd y'
\le 
C\,,
\end{align}
where the constant $C$ depends only on $\|\gamma\|_{W^{1,\infty}(\R^2)}$. 
\end{proposition}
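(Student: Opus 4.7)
The plan is to follow the truncation strategy of \cite{GM,KP2}: reduce the Lipschitz half-space problem to a bumpy strip coupled to the flat half-space $\{y_3 > a\}$ through the Dirichlet-to-Neumann operator ${\rm DN}$ of Subsection \ref{subsec.DN}. Fix any $a > \sup\gamma$ (we may take $a=1$ since $\gamma\in(-1,0)$) and set $\Omega^{\rm bl}_a := \{(y',y_3)~|~\gamma(y') < y_3 < a\}$. By Proposition \ref{prop.WP.S.half}, the restriction of any candidate solution $v$ to $\{y_3 > a\}$ is uniquely determined by its trace $v(\cdot,a)\in H^{\frac12}_{uloc}(\R^2)^3$; matching normal stresses at $y_3=a$ produces an equivalent problem on $\Omega^{\rm bl}_a$ with the original bumpy-wall Dirichlet data $v(y',\gamma(y')) = -\gamma(y'){\bf e}_j$ and a nonlocal transmission condition of the form $\partial_3 v - q{\bf e}_3 = {\rm DN}(v(\cdot,a))$ on $\{y_3=a\}$. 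The bounded vertical extent of the reformulation makes the Poincar\'e inequality available in the $y_3$-direction, which is what renders the energy form coercive.

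\textbf{Existence.} Lift the inhomogeneous wall data by constructing a divergence-free $V_0\in H^1_{uloc}(\Omega^{\rm bl}_a)^3$ supported in a fixed neighborhood of the wall with $V_0(y',\gamma(y')) = -\gamma(y'){\bf e}_j$: the tangential part is extended directly, and the nonzero normal flux on the wall is cancelled by a Bogovskii-type correction localized in a narrow sub-slab, the whole construction yielding $H^1_{uloc}$ bounds that depend only on $\|\gamma\|_{W^{1,\infty}(\R^2)}$. The shifted unknown $w:=v-V_0$ then satisfies a Stokes system on $\Omega^{\rm bl}_a$ with zero trace on the bumpy wall, the same DN condition on top, and a compactly-supported source $\Delta V_0$. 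Approximate solutions $w_N$ on the lateral truncations $(-N,N)^2\times\{\gamma(y')<y_3<a\}$ with zero lateral Dirichlet data are produced by Lax--Milgram; coercivity comes from (a) the Poincar\'e inequality in $y_3$ applied to the vanishing wall trace and (b) the nonnegativity of the diagonal DN pairing $\langle{\rm DN}(w_N(\cdot,a)+V_0(\cdot,a)),w_N(\cdot,a)+V_0(\cdot,a)\rangle$ provided by Lemma \ref{lem.DN} (iii).

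\textbf{Uniform local bound and conclusion.} The crucial step is the local $H^1$ bound on $w_N$ uniform in $N$, allowing passage to the limit $N\to\infty$. Following \cite[Proposition 10]{GM} and \cite[Section 5]{KP2}, test the equations against $\chi^2 w_N$ where $\chi=\chi(y')$ is a tangential cutoff supported in $(-R,R)^2$ and equal to one on $(-R+1,R-1)^2$. The resulting Caccioppoli-type identity generates three kinds of terms: commutators of $\chi$ with $\nabla$, absorbed by Young's inequality; the pressure, controlled by $\nabla w_N$ via a local analogue of \eqref{e.estpressure} in a slab of fixed thickness using the divergence-free condition; and the DN boundary contribution, which splits into a nonnegative local piece via Lemma \ref{lem.DN} (iii) and a nonlocal tail bounded by Lemma \ref{lem.DN} (i) together with the kernel decay $|K(y')|\le C|y'|^{-3}$ of \eqref{est1.subsec.DN}. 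One then obtains an induction-on-scales inequality of the form $E(R)\le\theta E(R+1) + C$ with $\theta\in(0,1)$, where $E(R)$ denotes the local $H^1$ energy; iterating yields $E(R)\le CR^2$, which is equivalent to the bound \eqref{est1.prop.BL}. Combining with Proposition \ref{prop.WP.S.half} applied to $v(\cdot,a)$ controls the energy in $\{y_3>a\}$. Uniqueness follows by applying the same localized energy argument directly to a solution with zero wall data. The main obstacle is the simultaneous treatment of the nonlocal ${\rm DN}$ operator at the top of the strip and the bumpy Lipschitz wall at the bottom inside the Caccioppoli inequality: reconciling these requires both the pointwise kernel decay $|K(y')|\le C|y'|^{-3}$ and the precise continuity estimate \eqref{est1.lem.DN} to absorb the tail contributions without any structure on $\gamma$ beyond $\|\gamma\|_{W^{1,\infty}(\R^2)}$.
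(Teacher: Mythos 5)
Your overall route is the paper's: reduce \eqref{BLj} to a problem on a bumpy strip coupled to the flat half-space through ${\rm DN}$ (the paper puts the interface at $y_3=0$ and lifts the wall data with the divergence-free shear $w=v^{-}+y_3{\bf e}_j$, so your Bogovskii lift at level $y_3=a$ is unnecessary though harmless), then solve laterally truncated problems by Lax--Milgram using the positivity of ${\rm DN}$, prove a uniform local energy bound, and pass to the limit. The genuine gap is at the uniform bound, which is the heart of the proof. Testing with $w\chi_k^2$ does \emph{not} produce an inequality of the form $E(R)\le\theta E(R+1)+C$: first, the constant forcing ${\bf e}_j$ contributes $O(k^2)$, and, more seriously, the far-field part of the ${\rm DN}$ pairing, estimated through the kernel decay \eqref{est1.subsec.DN} and \eqref{int.est.proof.prop.BL}, sees the trace of $w$ on unit tiles at \emph{arbitrary} distance from $\supp\chi_k$ and yields the additional term $\frac{k^4}{m^6}\sup_{T\in\mathcal C_{k,m}}E_T[w]$ appearing in the Saint-Venant estimate \eqref{SVest.proof.prop.BL}. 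This global term cannot be bounded by $E(R+1)$, and for the truncated approximations $w_n$ the only a priori global control is the total energy, which is $O(n^2)$ and blows up; so a purely local damped iteration does not close, and the uniform-in-$n$ bound (hence the limit passage) is not justified by what you wrote.

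The paper's resolution of precisely this point is the part your sketch omits: the tile width $m=2\widetilde m$ is fixed large depending only on the universal constant $C^\ast$, through the series $A$ and $B$ in \eqref{def1.proof.prop.BL} and the choice \eqref{def2.proof.prop.BL}; the solution is translated so that a maximal-energy tile sits at the origin, which gives $\sup_{T}E_T[w_n^\ast]\le E_{\widetilde m}[w_n^\ast]$; and a downward induction in \eqref{SVest.proof.prop.BL} over $k=2n+\widetilde m,\,2n+\widetilde m-m,\ldots,\widetilde m$ produces $E_{\widetilde m}\le A\widetilde m^2+\frac{B}{64\widetilde m^2}E_{\widetilde m}$, which is absorbed by the choice of $\widetilde m$. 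The same mechanism, with an $\ep$-almost maximal tile (the supremum need not be attained) and the limit $l\to\infty$, is what gives uniqueness; "the same localized energy argument" without this device does not suffice. To repair your proposal you would need to derive the tile-supremum version of the Caccioppoli inequality and reproduce this absorption argument (or provide an alternative control of the nonlocal tail that is uniform in the truncation parameter).
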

%
\noindent We postpone the proof of Proposition \ref{prop.BL} to the next subsection. The basic idea of the proof is to decompose the domain $\Omega^{\rm bl}$ into $\overline{\R^3_+}\cup(\Omega^{\rm bl}\setminus\overline{\R^3_+})$ and to derive an equivalent equations to \eqref{BLj} on the infinite channel $\Omega^{\rm bl}\setminus\overline{\R^3_+}$. In the following lemma we introduce the new equations involving the Dirichlet-to-Neumann operator of Subsection \ref{subsec.DN}. Let us set $\Omega^{{\rm bl},-}=\Omega^{\rm bl}\setminus\overline{\R^3_+}$.
%
\begin{lemma}\label{lem.equiv.BL}
Fix $j\in\{1,2\}$ and let $v\in H^1_{loc}(\overline{\Omega^{\rm bl}})^3$ be a weak solution to \eqref{BLj} such that 
\begin{align}\label{est1.lem.equiv.BL}
\sup_{\eta\in\Z^2}\int_{\eta+(0,1)^2}
\int_{\gamma(y')}^\infty 
|\nabla v(y',y_3)|^2 \dd y_3 \dd y'
< \infty\,.
\end{align}
Then the restriction $v^{-}=v|_{\Omega^{{\rm bl},-}}$ satisfies
\begin{equation}\tag{BL$^{(j),-}$}\label{BLj-}
\left\{
\begin{array}{ll}
-\Delta v^{-}+\nabla q^{-} = 0\,, &y\in\Omega^{{\rm bl},-} \\
\nabla\cdot v^{-} = 0\,, &y\in\Omega^{{\rm bl},-} \\
v^{-}(y',\gamma(y')) = -\gamma(y') {\bf e}_j \\
(-\partial_3 v^{-}+q^{-}{\bf e}_3)|_{y_3=0} = {\rm DN}(v^{-}|_{y_3=0})
\end{array}
\right.
\end{equation}
and $v|_{\R^3_+}$ is given by $v(y) = \int_{\R^2} U(y'-\tilde{y}',y_3) v(\tilde{y}',0) \dd \tilde{y}$ using the Poisson kernel $U$ in \eqref{def1.proof.prop.WP.S.half} in the proof of Proposition \ref{prop.WP.S.half}. Conversely, let $v^{-}\in H^1_{loc}(\overline{\Omega^{{\rm bl},-}})^3$ be a weak solution to \eqref{BLj-} such that 
\begin{align}\label{est2.lem.equiv.BL}
\sup_{\eta\in\Z^2}\int_{\eta+(0,1)^2}
\int_{\gamma(y')}^0 
|\nabla v^{-}(y',y_3)|^2 \dd y_3 \dd y'
< \infty\,.
\end{align}
Then the extension $v$ of $v^{-}$ to $\Omega^{{\rm bl}}$ defined by
\begin{align*}
v(y) = 
\left\{
\begin{array}{ll}
v^{-}(y)\,, &y\in \Omega^{{\rm bl},-} \\
\displaystyle \int_{\R^2} U(y'-\tilde{y}',y_3) v^{-}(\tilde{y}',0) \dd \tilde{y}'\,, 
&y \in\R^3_+
\end{array}
\right.
\end{align*}
satisfies both \eqref{BLj} and \eqref{est1.lem.equiv.BL}.
\end{lemma}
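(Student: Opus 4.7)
The plan is to exploit the decomposition $\Omega^{\rm bl} = \R^3_+ \cup \Omega^{{\rm bl},-} \cup \{y_3=0\}$ and to couple the two pieces through the Dirichlet-to-Neumann operator. The two main tools are Proposition \ref{prop.WP.S.half} (well-posedness of the half-space Stokes problem with $H^{\frac12}_{uloc}$ boundary data) and the identity \eqref{rep2.lem.std.DN} from Lemma \ref{lem.DN}\,{\rm (iii)}, which expresses the contribution of $\R^3_+$ to the weak formulation as a duality pairing against ${\rm DN}(v|_{y_3=0})$.

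\textbf{Forward direction.} Assume $v$ solves \eqref{BLj} and satisfies \eqref{est1.lem.equiv.BL}. First I would verify that the trace $v(\cdot,0)$ lies in $H^{\frac12}_{uloc}(\R^2)^3$: combining \eqref{est1.lem.equiv.BL} with the pointwise bound $|v|=|\gamma|\le\|\gamma\|_{L^\infty(\R^2)}$ on $\{y_3=\gamma(y')\}$ and a Poincar\'e inequality in the vertical direction on unit cylinders yields a uniform local $H^1$ bound across slabs containing $\{y_3=0\}$, so the standard trace theorem applies. The restriction $v|_{\R^3_+}$ is then a weak solution of \eqref{S.half} with boundary datum $v(\cdot,0)$ that satisfies \eqref{est1.prop.WP.S.half}, hence equals the Poisson integral displayed in the statement by the uniqueness part of Proposition \ref{prop.WP.S.half}. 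The restriction $v^{-}=v|_{\Omega^{{\rm bl},-}}$ inherits the Stokes system, the divergence-free condition, and the boundary condition at $y_3=\gamma(y')$. For the DN boundary condition at $y_3=0$, I would take a divergence-free test function $\varphi\in C^\infty_0(\R^3)^3$ with $\supp\varphi\cap\{y_3=\gamma(y')\}=\emptyset$, use the weak form of \eqref{BLj} to write
\[\int_{\R^3_+}\nabla v\cdot\nabla\varphi+\int_{\Omega^{{\rm bl},-}}\nabla v^{-}\cdot\nabla\varphi=0,\]
and apply \eqref{rep2.lem.std.DN} to the first integral. Interpreting the second integral, via integration by parts (introducing the pressure $q^{-}$ to handle non-divergence-free corrections), as the pairing $\langle(-\partial_3 v^{-}+q^{-}{\bf e}_3)|_{y_3=0},\varphi|_{y_3=0}\rangle$ gives exactly the fourth line of \eqref{BLj-}.

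\textbf{Reverse direction.} Given $v^{-}$ solving \eqref{BLj-} with \eqref{est2.lem.equiv.BL}, the same trace/Poincar\'e argument shows $v^{-}(\cdot,0)\in H^{\frac12}_{uloc}(\R^2)^3$, with norm controlled by $\|\gamma\|_{W^{1,\infty}(\R^2)}$ and the local energy. I would then define $v$ on $\R^3_+$ by the Poisson formula applied to $v^{-}(\cdot,0)$, so that by Proposition \ref{prop.WP.S.half} this extension solves the Stokes system in $\R^3_+$ and satisfies \eqref{est1.prop.WP.S.half}. Since the traces match by construction, $v\in H^1_{loc}(\overline{\Omega^{\rm bl}})^3$. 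To check that $v$ weakly solves \eqref{BLj}, I would reverse the computation above: for any divergence-free $\varphi\in C^\infty_0(\Omega^{\rm bl})^3$, splitting the domain and using \eqref{rep2.lem.std.DN} on the $\R^3_+$-piece turns the fourth line of \eqref{BLj-} into the cancellation of the boundary terms at $y_3=0$, leaving $\int_{\Omega^{\rm bl}}\nabla v\cdot\nabla\varphi=0$. The uniform energy bound \eqref{est1.lem.equiv.BL} is then assembled from \eqref{est1.prop.WP.S.half} and \eqref{est2.lem.equiv.BL}.

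\textbf{Main obstacle.} The delicate step is the interpretation of the stress at $y_3=0$: since $q^{-}$ lies only in $L^2_{loc}$, its trace on $\{y_3=0\}$ is not defined pointwise and the identity $(-\partial_3 v^{-}+q^{-}{\bf e}_3)|_{y_3=0}={\rm DN}(v^{-}|_{y_3=0})$ must be read as an equality of distributions tested against compactly supported $\varphi|_{y_3=0}$. The cleanest way to handle this is to never isolate $-\partial_3 v^{-}$ from $q^{-}{\bf e}_3$, but to keep them coupled inside the weak formulation, where \eqref{rep2.lem.std.DN} allows the $\R^3_+$-contribution to be replaced directly by the DN pairing. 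A parallel check that no jumps in $v$ or in the normal stress appear across $\{y_3=0\}$ is then automatic: continuity of the trace follows from $v\in H^1_{loc}$ in the forward direction, and continuity of the stress is precisely what the DN matching condition enforces in the reverse direction.
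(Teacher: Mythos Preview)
Your proposal is correct and follows exactly the route the paper indicates: the paper's proof consists of a single sentence stating that the lemma ``easily follows from Proposition~\ref{prop.WP.S.half} and Lemma~\ref{lem.DN}\,(iii),'' and you have supplied precisely the details of that argument, including the trace regularity, the uniqueness-based identification of $v|_{\R^3_+}$ with the Poisson integral, and the use of \eqref{rep2.lem.std.DN} to convert the $\R^3_+$-integral into the DN pairing. Your discussion of the distributional interpretation of the stress condition at $y_3=0$ via the weak formulation \eqref{eq1.rem.lem.equiv.BL} is also on point and consistent with how the paper sets things up in Remark~\ref{rem.lem.equiv.BL}.
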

%
\begin{remark}\label{rem.lem.equiv.BL}
\noindent We call $v^{-}$ a weak solution to \eqref{BLj-} if $v^{-}$ satisfies \eqref{BLj-}$_3$ in the trace sense and 
\begin{align}\label{eq1.rem.lem.equiv.BL}
\begin{split}
\int_{\Omega^{{\rm bl},-}} \nabla v^{-} \cdot \nabla \varphi
= -\langle{\rm DN}(v^{-}|_{y_3=0}), \varphi|_{y_3=0}\rangle_{\mathcal{D}', \mathcal{D}}
\end{split}
\end{align}
holds for all $\varphi\in C^\infty_0(\overline{\Omega^{{\rm bl},-}})$ with $\nabla\cdot \varphi=0$ and $\varphi(y',\gamma(y'))=0$.
\end{remark}
%
\begin{proofx}{Lemma \ref{lem.equiv.BL}}
We omit the details since the statement easily follows from Proposition \ref{prop.WP.S.half} and Lemma \ref{lem.DN} (iii). This completes the proof.
\end{proofx}
%
\subsection{Proof of Proposition \ref{prop.BL}}\label{subsec.proof.BL}
We prove Proposition \ref{prop.BL} by using Lemma \ref{lem.equiv.BL}. The goal is to show the unique existence of $v^{-}$ solving \eqref{BLj-} in the class \eqref{est2.lem.equiv.BL}. In order to have a homogeneous Dirichlet boundary condition, we introduce new unknowns $w=v^{-}+y_3{\bf e}_j$ and $s=q^{-}$, and consider 
\begin{equation}\tag{$\widetilde{{\rm BL}}^{(j),-}$}\label{eq1.subsec.proof.BL}
\left\{
\begin{array}{ll}
-\Delta w+\nabla s = 0\,, &y\in\Omega^{{\rm bl},-} \\
\nabla\cdot w = 0\,, &y\in\Omega^{{\rm bl},-} \\
w(y',\gamma(y')) = 0 \\
(-\partial_3 w+s{\bf e}_3)|_{y_3=0} = {\rm DN}(w|_{y_3=0})-{\bf e}_j\,.
\end{array}
\right.
\end{equation}
The weak formulation of \eqref{eq1.subsec.proof.BL} is defined in a similar way to  \eqref{eq1.rem.lem.equiv.BL}. Before going into a detailed analysis, let us collect notations used in this subsection. For $r\in(0,\infty)$ and $y'_0\in\R^2$, we set
\begin{align*}
&\Omega^{{\rm bl},-}_{r,y'_0}
= \{y\in \R^3~|~y'\in y'_{0} + (-r, r)^2\,, \ \gamma(y') < y_3 < 0\}\,, \\
&\Sigma_{r,y'_0}
= \{(y',0)^\top\in \R^3~|~y'\in y'_{0} + (-r, r)^2\}\,, 
\end{align*}
and $\Omega^{{\rm bl},-}_{k}=\Omega^{{\rm bl},-}_{k,0}$ and $\Sigma_{k}=\Sigma_{k,0}$ when $k\in\N$. For even $m\in\N$ with $m=2\widetilde{m}$ and $k\in\N$ with $k\ge\widetilde{m}$, let $\mathcal C_{k,m}$ and $\mathcal C_m$ be sets of tiles $T$ in $\R^2$ of area $m^2$ respectively defined by
\begin{align*}
&\mathcal C_{k,m}
=\{T=\eta+(-\widetilde{m}, \widetilde{m})^2
~|~\eta\in\Z^2 \ \ {\rm and} \ \ T\subset\R^2\setminus\Sigma_{k+m-1}\}\,, \\
& \mathcal C_m 
= \{T=\eta+(-\widetilde{m}, \widetilde{m})^2~|~\eta\in\Z^2\}\,.
\end{align*}
Finally, by letting $w=w_{r,y'_0}$ be a weak solution to
\begin{equation}\tag{$\widetilde{{\rm BL}}^{(j),-}_{\,r,y_0}$}\label{BLj-ry0}
\left\{
\begin{array}{ll}
-\Delta w+\nabla s = 0\,, &y\in\Omega^{{\rm bl},-}_{r,y'_0} \\
\nabla\cdot w = 0\,, &y\in\Omega^{{\rm bl},-}_{r,y'_0} \\
w = 0\,, &y\in\partial\Omega^{{\rm bl},-}_{r,y'_0}\setminus\Sigma_{r,y'_0} \\
(-\partial_3 w+s{\bf e}_3)|_{y_3=0} = {\rm DN}(w|_{y_3=0})-{\bf e}_j\,,
\end{array}
\right.
\end{equation}
we extend $w_{r,y'_0}$ by zero to $\Omega^{{\rm bl},-}$ and denote by $E_{k}[w_{r,y'_0}]$ and $E_{T}[w_{r,y'_0}]$ its energy respectively on $\Omega^{{\rm bl},-}_{k}$ for $k\in\N$ and on $\{y\in\Omega^{{\rm bl},-}~|~y'\in T\,, \ \gamma(y') < y_3 < 0\}$ for $T\in\mathcal C_m$:
\begin{align*}
E_{k}[w_{r,y'_0}] = \int_{\Omega^{{\rm bl},-}_{k}} |\nabla w_{r,y'_0}|^2\,,\qquad
E_{T}[w_{r,y'_0}] = \int_{T} |\nabla w_{r,y'_0}|^2\,.
\end{align*} 
Before giving a detailed proof, let us sketch the argument of Proposition \ref{prop.BL}. We approximate a solution $w$ of \eqref{eq1.subsec.proof.BL} by a sequence of functions $\{w_n\}\subset H^1(\Omega^{{\rm bl},-})^3$ where each $w_n$ satisfies $({\rm BL}^{(j),-}_{\,n,0})$ in $\Omega^{{\rm bl},-}_{n}$ and $w_n=0$ in $\Omega^{{\rm bl},-}\setminus\Omega^{{\rm bl},-}_{n}$. We note that, for any $r\in(0,\infty)$ and $y'_0\in\R^2$, the well-posedness of \eqref{BLj-ry0} follows from an energy estimate and from the Lax-Milgram lemma thanks to the positivity of ${\rm DN}$; see \eqref{est2.lem.DN} in Lemma \ref{lem.DN}. Hence we aim at getting a uniform estimate for $w_n$ of the type of the estimate \eqref{est2.lem.equiv.BL} for $v^{-}$.  This uniform bound will be proved using a local energy estimate, the Saint-Venant estimate \eqref{SVest.proof.prop.BL} below, for the equations \eqref{BLj-ry0}. The reader is referred to the pioneering work by Lady\v{z}enskaja and Solonnikov \cite{LS} considering such a locally uniform bound for the Navier-Stokes flow in a channel under the no-slip condition. In contrast, since our problem \eqref{BLj-ry0} involves a non-local operator ${\rm DN}$ as a boundary condition, an additional term $\sup_{T\in\mathcal C_{k,m}} E_{T}[w_{r,y'_0}]$ arises in \eqref{SVest.proof.prop.BL}, and hence, a careful analysis is needed when proving the uniform bound of $\{w_n\}$. Then the existence of a solution $w$ to \eqref{eq1.subsec.proof.BL} is verified by taking the weak limit of $\{w_n\}$.  Uniqueness follows from a variant of the Saint-Venant estimate \eqref{SVest.proof.prop.BL}.
%
\begin{proofx}{Proposition \ref{prop.BL}}
We basically follow the procedure and the notations in \cite[Section 5]{KP2}. We also use the argument in the proof of \cite[Proposition 10]{GM} when estimating the pressure.

\noindent {\bf (Existence)} Firstly we prove the following Saint-Venant estimate to the equations \eqref{BLj-ry0}: fix $r\in(0,\infty)$ and $y'_0\in\R^2$. Let $w_{r,y'_0}\in H^1(\Omega^{{\rm bl},-}_{r,y'_0})^3$ be a solution to \eqref{BLj-ry0}. Then for even $m=2\widetilde{m}\in\N$ with $\widetilde{m}>1$ and $k\in\N$ with $k\ge\widetilde{m}$, the zero-extension of $w_{r,y'_0}$ to $\Omega^{{\rm bl},-}$ satisfies
\begin{align}\tag{SV}\label{SVest.proof.prop.BL}
E_{k}[w_{r,y'_0}]
\le C^\ast \Big(k^2 + E_{k+m}[w_{r,y'_0}] - E_{k}[w_{r,y'_0}]
+ \frac{k^4}{m^6} \sup_{T\in\mathcal C_{k,m}} E_{T}[w_{r,y'_0}] \Big)\,,
\end{align}
where $C^\ast$ is independent of $r$, $y'_0$, $m$, and $k$, and depends on $\|\gamma\|_{W^{1,\infty}(\R^2)}$. In the proof of \eqref{SVest.proof.prop.BL}, we denote $(w_{r,y'_0},s_{r,y'_0})$ by $(w,s)$ and $\langle \cdot, \cdot \rangle_{\mathcal{D}', \mathcal{D}}$ by $\langle \cdot, \cdot \rangle$ in order to simplify notation. Let us take a cut-off function $\chi_k\in C^\infty_0(\R^2)$ satisfying
\begin{align*}
&\chi_k\in[0,1]\,, \quad
\supp\chi_k\subset (-k-1,k+1)^2\,, \\
&\chi_k=1 \ \, {\rm in} \ \, [-k,k]^2\,, \quad 
\|\chi_k\|_{W^{1,\infty}(\R^2)} \le 2\,.
\end{align*}
Then we test $w \chi_k^2$ against \eqref{BLj-ry0} to see that
\begin{align}\label{eq1.proof.prop.BL}
\begin{split}
\int_{\Omega^{{\rm bl},-}} |\nabla w|^2 \chi_k^2
&=
-2 \int_{\Omega^{{\rm bl},-}} \nabla w \cdot (w \nabla \chi_k) \chi_k 
+ \langle{\bf e}_j, w|_{y_3=0} \chi_k^2 \rangle \\
& \quad
- \langle{\rm DN}(w|_{y_3=0}), w|_{y_3=0} \chi_k^2 \rangle
+ 2 \int_{\Omega^{{\rm bl},-}} s (w\cdot\nabla \chi_k) \chi_k\,.
\end{split}
\end{align}
We estimate each term in the right-hand side of \eqref{eq1.proof.prop.BL}. For the first term we have
\begin{align}\label{est1.proof.prop.BL}
\begin{split}
\bigg|2 \int_{\Omega^{{\rm bl},-}} \nabla w \cdot (w \nabla \chi_k) \chi_k\bigg|
&\le
2 \bigg(\int_{\Omega^{{\rm bl},-}} |\nabla w|^2 \chi_k^2\bigg)^{\frac12}
\bigg(\int_{\Omega^{{\rm bl},-}} |w|^2 |\nabla \chi_k|^2\bigg)^{\frac12} \\
&\le
C \bigg(\int_{\Omega^{{\rm bl},-}} |\nabla w|^2 \chi_k^2\bigg)^{\frac12}
\big(E_{k+1}[w]-E_k[w]\big)^{\frac12}\,,
\end{split}
\end{align}
while for the second term we see that from Lemma \ref{lem.DN} (i), 
\begin{align}\label{est2.proof.prop.BL}
\begin{split}
|\langle{\bf e}_j, w|_{y_3=0} \chi_k^2 \rangle|
&\le 
Ck\|w|_{y_3=0} \chi_k^2\|_{H^\frac12(\R^2)} \\
&\le
Ck\|\nabla(w \chi_k^2)\|_{L^2(\Omega^{{\rm bl},-})} \\
&\le
Ck\big(E_{k+1}[w]-E_k[w]\big)^\frac12
+ Ck\bigg(\int_{\Omega^{-}} |\nabla w|^2 \chi_k^2 \bigg)^\frac12\,.
\end{split}
\end{align}
Here the trace theorem and the Poincar\'e inequality have been applied in the second line. Next we estimate the third term in the right-hand side of \eqref{eq1.proof.prop.BL} involving the nonlocal operator ${\rm DN}$. We split it into
\begin{align}\label{eq2.proof.prop.BL}
\begin{split}
-\langle{\rm DN}(w|_{y_3=0}), w|_{y_3=0} \chi_k^2 \rangle
&=
-\langle{\rm DN}(w|_{y_3=0} \chi_k^2), w|_{y_3=0} \chi_k^2 \rangle \\
&\quad
- \langle{\rm DN}(w|_{y_3=0}(\chi_{k+m-1}^2-\chi_k^2), w|_{y_3=0} \chi_k^2 \rangle \\
&\quad
- \langle{\rm DN}(w|_{y_3=0}(1-\chi_{k+m-1}^2)), w|_{y_3=0} \chi_k^2 \rangle\,.
\end{split}
\end{align}
Then from \eqref{est2.lem.DN} in Lemma \ref{lem.DN} we have 
\begin{align}\label{est3.proof.prop.BL}
\begin{split}
-\langle{\rm DN}(w|_{y_3=0}), w|_{y_3=0} \chi_k^2 \rangle
& \le
-\langle{\rm DN}(w|_{y_3=0}), w|_{y_3=0} \chi_k^2 \rangle \\
& \quad
+ \langle{\rm DN}(w|_{y_3=0} \chi_k^2), w|_{y_3=0} \chi_k^2 \rangle\,,
\end{split}
\end{align}
and from the definition in \eqref{def2.subsec.DN}, by taking $R\in(0,\infty)$ sufficiently large depending on $k$, we also have
\begin{align}\label{est4.proof.prop.BL}
\begin{split}
&|\langle{\rm DN}(w|_{y_3=0}(\chi_{k+m-1}^2-\chi_k^2), w|_{y_3=0} \chi_k^2 \rangle| \\
&\le 
C \|w|_{y_3=0}(\chi_{k+m-1}^2-\chi_k^2)\|_{H^{\frac12}(\R^2)}
\|w|_{y_3=0} \chi_k^2\|_{H^{\frac12}(\R^2)} \\
&\le
C \big(E_{k+m}[w]-E_k[w]\big)^{\frac12}
\bigg(\big(E_{k+1}[w]-E_k[w]\big)^\frac12
+ \bigg(\int_{\Omega^{{\rm bl},-}} |\nabla w|^2 \chi_k^2 \bigg)^\frac12 \bigg)\,,
\end{split}
\end{align}
where the trace theorem and the Poincar\'e inequality are used again to obtain the last line. In estimating the third term in the right-hand side of \eqref{eq2.proof.prop.BL}, the following estimate is useful: for even $m=2\widetilde{m}\in\N$ with $\widetilde{m}>1$ and $k\in\N$ with $k\ge\widetilde{m}$, one has
\begin{align}\label{int.est.proof.prop.BL}
\int_{\Sigma_{k+1}} 
\bigg(
\int_{\R^2} \frac{1-\chi^2_{k+m-1}(\tilde{y}')}{|y'-\tilde{y}'|^3}
|w(y',0)|\dd \tilde{y}' \bigg)^2 \dd y'
\le C\frac{k^4}{m^6} \sup_{T\in\mathcal C_{k,m}}E_T[w]\,,
\end{align}
where $C$ is a numerical constant. For the proof of \eqref{int.est.proof.prop.BL} we refer to \cite[Lemma 14]{KP2}. Then, by the definition in \eqref{def2.subsec.DN}, since the supports of $1-\chi^2_{k+m-1}$ and $\chi_k^2$ are disjoint and $m>4$, we obtain
\begin{align}\label{est5.proof.prop.BL}
\begin{split}
&|\langle{\rm DN}(w|_{y_3=0}(1-\chi_{k+m-1}^2)), w|_{y_3=0} \chi_k^2\rangle|\\
&\le
C\int_{\R^2}\int_{\R^2}
\frac{1-\chi_{k+m-1}^2(\tilde{y}')}{|y'-\tilde{y}'|^3}
|w(\tilde{y}',0)| |w(y',0) \chi_k^2(y')| \dd \tilde{y}'\dd y' \\
&\le
C\bigg(\int_{\Sigma_{k+1}} \bigg(\int_{\R^2}
\frac{1-\chi^2_{k+m-1}(\tilde{y}')}{|y'-\tilde{y}'|^3} 
|w(y',0)|\dd \tilde{y}' \bigg)^2 \dd y' \bigg)^{\frac12} \\
& \qquad
\times \bigg(\int_{\Sigma_{k+1}} |w(y',0)|^2 \chi_k^2 \dd y' \bigg)^{\frac12} \\
&\le
C\frac{k^2}{m^3}
\Big(\sup_{T\in \mathcal C_{k,m}} E_T[w] \Big)^\frac12
\bigg(\big(E_{k+1}[w]-E_k[w]\big)^\frac12
+ \bigg(\int_{\Omega^{{\rm bl},-}} |\nabla w|^2 \chi_k^2 \bigg)^\frac12 \bigg)\,.
\end{split}
\end{align}
Thus, by applying \eqref{est3.proof.prop.BL}, \eqref{est4.proof.prop.BL}, and \eqref{est5.proof.prop.BL} to \eqref{eq2.proof.prop.BL}, we have
\begin{align}\label{est6.proof.prop.BL}
\begin{split}
&-\langle{\rm DN}(w|_{y_3=0}), w|_{y_3=0} \chi_k^2 \rangle \\
&\le
C\bigg(
\big(E_{k+m}[w]-E_k[w]\big)^\frac12
+ \frac{k^2}{m^3} \Big(\sup_{T\in \mathcal C_{k,m}} E_T[w] \Big)^\frac12
\bigg) \\
&\qquad
\times \bigg(\big(E_{k+1}[w]-E_k[w]\big)^\frac12
+ \bigg(\int_{\Omega^{{\rm bl},-}} |\nabla w|^2 \chi_k^2 \bigg)^{\frac12} \bigg)\,.
\end{split}
\end{align}
Now we estimate the last term in the right-hand side of \eqref{eq1.proof.prop.BL}. The fundamental theorem of calculus leads to
\begin{align*}
\begin{split}
s(y) = s(y',0) - \int_{y_3}^{0} \partial_3 s(y',t)\dd t\,.
\end{split}
\end{align*}
From $s(y',0)=\partial_3 w_3(y',0) + {\rm DN}(w|_{y_3=0})\cdot {\bf e}_3$ and the equation $\partial_3 s=\Delta w_3$, one finds
\begin{align*}
\begin{split}
s(y) = {\rm DN}(w|_{y_3=0})\cdot {\bf e}_3 
- \sum_{j=1}^{2} \partial_j \bigg(\int_{y_3}^{0} \partial_j w_3(y',t)\dd t\bigg)
+ \partial_3 w_3(y)\,.
\end{split}
\end{align*}
Hence, by setting $W(y')=\int_{\gamma(y')}^0 w(y',y_3)\dd y_3$ and using integration by parts, we see that
\begin{align}\label{eq3.proof.prop.BL}
\begin{split}
& \int_{\Omega^{{\rm bl},-}} s (w\cdot\nabla \chi_k) \chi_k \\
&=
\langle {\rm DN}(w|_{y_3=0})\cdot {\bf e}_3, (W\cdot\nabla \chi_k) \chi_k \rangle \\
&\quad
+ \sum_{j=1}^{2} \int_{\Omega^{{\rm bl},-}}
\bigg(\int_{y_3}^{0} \partial_j w_3(y',t)\dd t \bigg)
\partial_j \big((w\cdot\nabla \chi_k) \chi_k\big) \dd y \\
&\quad
+ \int_{\Omega^{{\rm bl},-}}
\partial_3 w_3 (w\cdot\nabla \chi_k) \chi_k\,.
\end{split}
\end{align}
For the first term in the right-hand side of \eqref{eq3.proof.prop.BL}, in similar ways as in \eqref{eq2.proof.prop.BL}, \eqref{est4.proof.prop.BL}, and \eqref{est5.proof.prop.BL}, we have
\begin{align}\label{est8.proof.prop.BL}
\begin{split}
& |\langle {\rm DN}(w|_{y_3=0})\cdot {\bf e}_3, (W\cdot\nabla \chi_k) \chi_k \rangle| \\
&\le
|\langle {\rm DN}(w|_{y_3=0} \chi_{k}^2)\cdot {\bf e}_3, (W\cdot\nabla \chi_k) \chi_k \rangle| \\
&\quad
+ |\langle {\rm DN}(w|_{y_3=0} (\chi_{k+m-1}^2 - \chi_{k}^2))\cdot {\bf e}_3, (W\cdot\nabla \chi_k) \chi_k \rangle| \\
&\quad
+ |\langle {\rm DN}(w|_{y_3=0} (1-\chi_{k+m-1}^2))\cdot {\bf e}_3, 
(W\cdot\nabla \chi_k) \chi_k \rangle| \\
&\le
C \bigg(\big(E_{k+1}[w]-E_k[w]\big)^\frac12
+ \bigg(\int_{\Omega^{{\rm bl},-}} |\nabla w|^2 \chi_k^2 \bigg)^\frac12 \bigg)
\big(E_{k+1}[w]-E_k[w]\big)^\frac12 \\
&\quad
+ C\big(E_{k+m}[w]-E_k[w]\big)^\frac12
\big(E_{k+1}[w]-E_k[w]\big)^\frac12 \\
&\quad
+ C\frac{k^2}{m^3}
\Big(\sup_{T\in \mathcal C_{k,m}} E_T[w] \Big)^\frac12
\big(E_{k+1}[w]-E_k[w]\big)^\frac12\,,
\end{split}
\end{align}
while for the second and the last terms we have
\begin{align}\label{est9.proof.prop.BL}
\begin{split}
&\sum_{j=1}^{2} \bigg|\int_{\Omega^{{\rm bl},-}}
\bigg(\int_{y_3}^{0} \partial_j w_3(y',t)\dd t \bigg)
\partial_j \big((w\cdot\nabla \chi_k) \chi_k\big) \dd y\bigg| \\
& \quad
+ \bigg|\int_{\Omega^{{\rm bl},-}}
\partial_3 w_3 (w\cdot\nabla \chi_k) \chi_k\bigg| \\
&\le
C\big(E_{k+1}[w]-E_k[w]\big)\,.
\end{split}
\end{align}
Therefore, by applying \eqref{est8.proof.prop.BL} and \eqref{est9.proof.prop.BL} to \eqref{eq3.proof.prop.BL}, we obtain
\begin{equation}\label{est10.proof.prop.BL}
\begin{aligned}
&\bigg|2 \int_{\Omega^{{\rm bl},-}} s (w\cdot\nabla \chi_k) \chi_k \bigg| \\
&\le
C \big(E_{k+1}[w]-E_k[w]\big)^\frac12
\bigg(\big(E_{k+1}[w]-E_k[w]\big)^\frac12
+ \bigg(\int_{\Omega^{{\rm bl},-}} |\nabla w|^2 \chi_k^2 \bigg)^\frac12 \bigg) \\
&\quad
+ C\big(E_{k+m}[w]-E_k[w]\big)^\frac12
\big(E_{k+1}[w]-E_k[w]\big)^\frac12 \\
&\quad
+ C\frac{k^2}{m^3}
\Big(\sup_{T\in \mathcal C_{k,m}} E_T[w] \Big)^\frac12
\big(E_{k+1}[w]-E_k[w]\big)^\frac12\,.
\end{aligned}
\end{equation}
Hence, by using \eqref{est1.proof.prop.BL}, \eqref{est2.proof.prop.BL}, \eqref{est6.proof.prop.BL}, \eqref{est10.proof.prop.BL}, and $E_{k+1}[w]\le E_{k+m}[w]$, we estimate the right-hand side of \eqref{eq1.proof.prop.BL} as
\begin{align*}
&\int_{\Omega^{{\rm bl},-}} |\nabla w|^2 \chi_k^2 \\
&\le 
Ck\big(E_{k+m}[w]-E_k[w]\big)^\frac12 \\
& \quad
+ C\Big(k + \big(E_{k+m}[w]-E_k[w]\big)^{\frac12}\Big)
\bigg(\int_{\Omega^{{\rm bl},-}} |\nabla w|^2 \chi_k^2\bigg)^{\frac12} \\
&\quad
+ C\bigg(
\big(E_{k+m}[w]-E_k[w]\big)^\frac12
+ \frac{k^2}{m^3} \Big(\sup_{T\in \mathcal C_{k,m}} E_T[w] \Big)^\frac12
\bigg) \\
&\qquad\quad
\times \bigg(\big(E_{k+m}[w]-E_k[w]\big)^\frac12
+ \bigg(\int_{\Omega^{{\rm bl},-}} |\nabla w|^2 \chi_k^2 \bigg)^{\frac12} \bigg)\,,
\end{align*}
where $C$ is independent of $r$, $y'_0$, $m$, and $k$. Then the Young inequality leads to the assertion \eqref{SVest.proof.prop.BL}.

Applying the estimate \eqref{SVest.proof.prop.BL}, we prove the existence of a weak solution $w$ to \eqref{eq1.subsec.proof.BL} in the class
\begin{align}\label{goal1.proof.prop.BL}
\sup_{\eta\in\Z^2} \int_{\eta+(0,1)^2}
\int_{\gamma(y')}^0 
|\nabla w(y',y_3)|^2 \dd y_3 \dd y'
< +\infty\,.
\end{align}
Let $\{w_n\}_{n=N}^\infty$ be a sequence of solutions to $({\rm BL}^{(j),-}_{\,n,0})$ with a fixed $N\in\N$. Then our goal is to prove an estimate of the type \eqref{goal1.proof.prop.BL} for $\{w_n\}_{n=N}^\infty$ uniformly bounded in $n$. More precisely, by setting
\begin{align}\label{def1.proof.prop.BL}
\begin{split}
&A=(C^\ast+1) \sum_{p=0}^\infty\Big(\frac{C^*}{C^*+1}\Big)^{p+1} (2p+1)^2\,,\\ &B=\sum_{p=0}^\infty \Big(\frac{C^*}{C^*+1}\Big)^{p+1} (2p+1)^4\,,
\end{split}
\end{align}
where $C^\ast$ is the numerical constant in \eqref{SVest.proof.prop.BL}, we will show that there exists an even number $m=2\widetilde{m}\in\N$ with $\widetilde{m}>1$ such that for all $l\in\N$ and $n=lm$ we have 
\begin{align}
\label{goal2.proof.prop.BL}
\sup_{\eta\in\Z^2}
\int_{\eta+(0,1)^2}
\int_{\gamma(y')}^0 |\nabla w_n(y',y_3)|^2 \dd y_3 \dd y'
\le \frac{Am^2}{2}\,.
\end{align}
Here and in the rest of the proof we do not distinguish $w_n$ on $\Omega^{{\rm bl},-}_{n}$ and its zero-extension to $\Omega^{{\rm bl},-}$. Although the uniform estimate \eqref{goal2.proof.prop.BL} can be proved in the same way as in \cite[Subsection 2.2]{KP2}, we give a brief description of it for the sake of completeness. Firstly we choose $\widetilde{m}\in\N$ sufficiently large so that 
\begin{equation}\label{def2.proof.prop.BL}
\widetilde{m} > \max\Big\{1\,, \sqrt{\frac{B}{32}}\Big\}\,.
\end{equation}
Fix $l\in\N$. Then, since $w_n$ is supported in $\Omega^{{\rm bl},-}_{n}$, by the choice of $n=lm$ there exists $T^\ast\in \mathcal C_{m}$ such that $T^\ast\subset(-n,n)^2$ and $E_{T^*}[w_n]=\sup_{T\in\mathcal C_{m}} E_T[w_n]$. Hence we aim at showing that
\begin{align}\label{goal3.proof.prop.BL}
E_{T^*}[w_n]
\le \frac{Am^2}{2}\,,
\end{align}
which immediately leads to \eqref{goal2.proof.prop.BL} by the definition. On the other hand, the existence of $T^\ast$ implies that there is a point $\eta^\ast\in\R^2$ such that $T^\ast=\eta^\ast+(-\widetilde{m},\widetilde{m})^2$ and $\eta^\ast\in(-n,n)^2$. We define the translation $w^*_n(y)=w_n(y'+\eta^*,y_3)$. Then $E_{\widetilde{m}}[w^*_n]=E_{T^*}[w_n]$ and $w^*_n$ is a solution to $({\rm BL}^{(j),-}_{\,n,-\eta^*})$ replacing and the boundary $\gamma$ by $\gamma^*(y')=\gamma(y'+\xi^*)$, and therefore, $w^*_n$ satisfies
\begin{align}\label{SVest1.proof.prop.BL}
E_{k}[w^*_n]
\le C^\ast \Big(k^2 + E_{k+m}[w^*_n] - E_{k}[w^*_n]
+ \frac{k^4}{m^6} \sup_{T\in\mathcal C_{k,m}} E_{T}[w^*_n] \Big)
\end{align}
for all $k\ge \widetilde{m}$ with the same constant  $C^\ast$ as in \eqref{SVest.proof.prop.BL}. In the following we prove \eqref{goal3.proof.prop.BL} replacing $E_{T^*}[w_n]$ by $E_{\widetilde{m}}[w^*_n]$ by downward induction in \eqref{SVest1.proof.prop.BL} for $k=2n+\widetilde{m}, 2n+\widetilde{m}-m,\ldots,\widetilde{m}$. When $k=2n+\widetilde{m}$, since $w^*_n$ is supported in $\Omega^{-}_{2n}$ by its definition, we have $E_{k+m}[w^*_n]=E_{k}[w^*_n]$ and $E_T[w^*_n]=0$ for any $T\in\mathcal C_{2n+\widetilde{m},m}$. Thus the estimate \eqref{SVest1.proof.prop.BL} implies that
\begin{equation*}
E_{2n+\widetilde{m}}[w^*_n]
\le C^* (2n+\widetilde{m})^2\,.
\end{equation*}
When $k=2n+\widetilde{m}-m$, since $E_T[w^*_n]=0$ for any $T\in\mathcal C_{2n+\widetilde{m}-m,m}$ again by $\widetilde{m}>1$, we have
\begin{align*}
E_{2n+\widetilde{m}-m}[w^*_n]
& \le
\frac{C^*}{C^*+1} 
\Big((2n+\widetilde{m}-m)^2 + E_{2n+\widetilde{m}}[w^*_n]\Big) \\
& \le
\Big(\frac{C^*}{C^*+1}\Big) (2n+\widetilde{m}-m)^2 
+ \Big(\frac{C^*}{C^*+1}\Big) C^* (2n+\widetilde{m})^2\,.
\end{align*}
When $k=2n+\widetilde{m}-\widetilde{l}m$ with $\widetilde{l}\in\{2,\cdots,2l\}$, from $\sup_{T\in\mathcal C_{k,m}} E_{T}[w^*_n]\le E_{\widetilde{m}}[w^*_n]$ we have 
\begin{align*}
E_{2n+\widetilde{m}-\widetilde{l}m}[w^*_n] 
&\le
(C^*+1)
\sum_{p=0}^{\widetilde{l}}
\Big(\frac{C^*}{C^*+1}\Big)^{p+1} \big(2n+\widetilde{m}-(\widetilde{l}-p)m\big)^2 \\
& \quad
+ \frac{E_{\widetilde{m}}[w^*_n]}{m^2} \sum_{p=0}^{\widetilde{l}-2}
\Big(\frac{C^*}{C^*+1}\Big)^{p+1} 
\frac{\big(2n+\widetilde{m}-(\widetilde{l}-p)m\big)^4}{m^4}\,,
\end{align*}
which implies, when $\widetilde{l}=2l$, by $2n=2lm$ and $m=2\widetilde{m}$ that
\begin{align*}
E_{\widetilde{m}}[w^*_n]
&\le
\widetilde{m}^2 (C^*+1)
\sum_{p=0}^{2l}
\Big(\frac{C^*}{C^*+1}\Big)^{p+1} (2p+1)^2 \\
& \quad
+ \frac{E_{\widetilde{m}}[w^*_n]}{64\widetilde{m}^2} \sum_{p=0}^{2l-2}
\Big(\frac{C^*}{C^*+1}\Big)^{p+1} (2p+1)^4 \\
&\le
A \widetilde{m}^2 + \frac{B}{64\widetilde{m}^2} 
E_{\widetilde{m}}[w^*_n]\,.
\end{align*}
Hence we obtain \eqref{goal3.proof.prop.BL} from the choice of $\widetilde{m}$ in \eqref{def2.proof.prop.BL}, and therefore, the desired estimate \eqref{goal2.proof.prop.BL}.

Now we can prove the existence of a weak solution $w$ to \eqref{eq1.subsec.proof.BL} in the class \eqref{goal1.proof.prop.BL}. After taking a sequence $\{w_n\}_{n=m}^\infty$ of weak solutions to $({\rm BL}^{(j),-}_{\,n,0})$, we extract a subsequence which converges to a function $w$ satisfying both \eqref{goal1.proof.prop.BL} and the weak formulation of \eqref{eq1.subsec.proof.BL} thanks to \eqref{goal2.proof.prop.BL}. We note that the convergence of the operator ${\rm DN}$ on the subsequence can be inferred from Lemma \ref{lem.DN} (ii). Hence we have proved the existence of a weak solution of \eqref{BLj} 
in the class \eqref{est1.prop.BL} from Lemma \ref{lem.equiv.BL}.

\noindent {\bf (Uniqueness)} Suppose that $w\in H^{1}_{loc}(\Omega^{{\rm bl},-})$ satisfies both
\begin{align}\label{est11.proof.prop.BL}
\sup_{\eta\in\Z^2} \int_{\eta+(0,1)^2}
\int_{\gamma(y')}^0 
|\nabla w(y',y_3)|^2 \dd y_3 \dd y'
\le C_0\,,
\end{align}
with some bound $C_0\in(0,\infty)$, and
\begin{equation}\label{eq5.proof.prop.BL}
\left\{
\begin{array}{ll}
-\Delta w+\nabla s = 0\,, &y\in\Omega^{{\rm bl},-} \\
\nabla\cdot w = 0\,, &y\in\Omega^{{\rm bl},-} \\
w(y',\gamma(y')) = 0 \\
(-\partial_3 w+r{\bf e}_3)|_{y_3=0} = {\rm DN}(w|_{y_3=0})\,.
\end{array}
\right.
\end{equation}
Then we aim at proving $w=0$. By following the existence part and by defining $E_{k}[w]$ and $E_{T}[w]$ in an obvious way, we see that for even $m=2\widetilde{m}\in\N$ with $\widetilde{m}>1$ and $k\in\N$ with $k\ge\widetilde{m}$, 
\begin{align}\label{SVest2.proof.prop.BL}
E_k[w]
\le C^\ast \Big(E_{k+m}[w] - E_{k}[w]
+ \frac{k^4}{m^6} \sup_{T\in\mathcal C_{k,m}} E_{T}[w] \Big)
\end{align}
holds with the same constant $C^\ast$ as in \eqref{SVest.proof.prop.BL}. In particular, we note that the estimate \eqref{int.est.proof.prop.BL} is valid even if the function $w$ in \eqref{int.est.proof.prop.BL} does not vanish outside a bounded domain. Since $\sup_{T\in\mathcal C_{m}} E_T[w]$ is finite, for any $\ep\in(0,1)$ there exists $T^*_\ep=\eta^*_\varepsilon+(-\widetilde{m},\widetilde{m})^2\in\mathcal C_{m}$ with some $\eta^\ast_\ep\in\Z^2$ such that 
\begin{align}\label{est12.proof.prop.BL}
\sup_{T\in\mathcal C_{m}}E_T[w]-\ep
\le E_{T^\ast_\ep}[w]
\le \sup_{T\in\mathcal C_{m}}E_T[w]\,.
\end{align}
As was done in the existence part of the proof, by defining $w^*(y)=w(y'+\eta^*_\ep,y_3)$, we have 
$\sup_{T\in\mathcal C_{m}}E_T[w^\ast]=\sup_{T\in\mathcal C_{m}}E_T[w]$, 
$E_{\widetilde{m}}[w^*]=E_{T^\ast_\ep}[w]$, and 
\begin{align}\label{SVest3.proof.prop.BL}
E_k[w^*]
\le 
C^* \Big(E_{k+m}[w^*] - E_k[w^*] 
+ \frac{k^4}{m^6} \sup_{T\in\mathcal C_{k,m}} E_T[w^*] \Big)
\end{align}
with the same $C^\ast$ as in \eqref{SVest.proof.prop.BL} again. Fix $\widetilde{m}\in\N$ satisfying \eqref{def2.proof.prop.BL}, let $l\in\N$, and set $m=2\widetilde{m}$ and $n=lm$. Then we derive an estimate of $E_{\widetilde{m}}[w^*]$ by downward induction in \eqref{SVest3.proof.prop.BL} for $k=n+\widetilde{m}, n+\widetilde{m}-m,\ldots,\widetilde{m}$. When $k=n+\widetilde{m}$ we have $E_{n+\widetilde{m}}[w^\ast] \le C_0 (n+\widetilde{m})^2$ from the assumption \eqref{est11.proof.prop.BL} without using \eqref{SVest3.proof.prop.BL}. When $k=n+\widetilde{m}-\widetilde{l}m$ with $\widetilde{l}\in\{1,\cdots,l\}$, we have 
\begin{align*}
E_{n+\widetilde{m}-\widetilde{l}m}[w^\ast] 
&\le
\Big(\frac{C^*}{C^*+1}\Big)^{\widetilde{l}} E_{n+\widetilde{m}}[w^\ast] \\
& \quad
+ \frac{1}{m^2} \sup_{T\in\mathcal C_{k,m}} E_T[w^*]
\sum_{p=0}^{\widetilde{l}-1}
\Big(\frac{C^*}{C^*+1}\Big)^{p+1} 
\frac{\big(n+\widetilde{m}-(\widetilde{l}-p)m\big)^4}{m^4}\,,
\end{align*}
and hence, when $\widetilde{l}=l$, we have by $n=lm$, $m=2\widetilde{m}$, and \eqref{est12.proof.prop.BL}, 
\begin{align}\label{est13.proof.prop.BL}
E_{\widetilde{m}}[w^\ast]
&\le
C_0 \widetilde{m}^2 \Big(\frac{C^*}{C^*+1}\Big)^{l} (2l+1)^2
+ \frac{E_{\widetilde{m}}[w^*] + \ep}{64\widetilde{m}^2}
\sum_{p=0}^{l-1}
\Big(\frac{C^*}{C^*+1}\Big)^{p+1} (2p+1)^4\,.
\end{align}
Then, from the definition of $B$ in \eqref{def1.proof.prop.BL} and the choice of $\widetilde{m}$ in \eqref{def2.proof.prop.BL}, we see that
\begin{align*}
E_{\widetilde{m}}[w^\ast]
\le 
\frac{B\ep}{32\widetilde{m}^2}\,,
\end{align*}
by taking the limit $l\to\infty$ in \eqref{est13.proof.prop.BL}. Then we have $E_{\widetilde{m}}[w^\ast]=0$ and hence $\sup_{T\in\mathcal C_{m}}E_T[w]=0$, which finally implies $w=0$. Therefore we have proved the uniqueness of weak solutions to \eqref{BLj} in the class \eqref{est1.prop.BL} from Lemma \ref{lem.equiv.BL}. The proof of Proposition \ref{prop.BL} is complete.
\end{proofx}
%
\subsection{Periodic case}\label{subsec.per.BL}
%
\noindent We investigate the asymptotic behavior of the boundary layer corrector at spatial infinity when the boundary is periodic. Similar arguments can be found for instance in Achdou, Pironneau, and Valentin \cite{APV}, J\"{a}ger and Mikeli\'{c} \cite{JM2}, and Amirat, Bodart, De Maio, and Gaudiello \cite{ABDG}.
%
\begin{proposition}\label{prop.per.BL}
Fix $j\in\{1,2\}$ and let $\gamma\in W^{1,\infty}(\R^2)$ be $2\pi$-periodic in each variable. Then the weak solution $(v^{(j)}, q^{(j)})$ to \eqref{BLj} provided by Proposition \ref{prop.BL} satisfies the following properties. \\
\noindent {\rm (i)} There exists a constant vector field $\alpha^{(j)}=(\alpha^{(j)}_1, \alpha^{(j)}_2, 0)^\top\in\R^3$ such that 
\begin{align}\label{est1.prop.per.BL}
|v^{(j)}(y)-\alpha^{(j)}| + y_3 |q^{(j)}(y)|
\le C\|v^{(j)}(\cdot,0)\|_{L^2((0,2\pi)^2)}\,e^{-\frac{y_3}{2}}\,, \quad y_3>1\,,
\end{align}
where $C$ is a numerical constant. \\
\noindent {\rm (ii)} The $2\times2$ matrix ${\overline M}\in\R^{2\times2}$ defined by ${\overline M}=(\alpha^{(j)}_i)_{1\le i,j\le 2}$ is symmetric and positive definite.
\end{proposition}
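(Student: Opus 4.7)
The plan is to handle (i) and (ii) separately, using the periodicity to reduce (i) to a Fourier-series analysis in the upper half-space $\{y_3>0\}\subset\Omega^{\rm bl}$ and then using integration by parts, combined with (i), to prove (ii).

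For (i), by uniqueness in Proposition~\ref{prop.BL} and the $2\pi$-periodicity of $\gamma$, the corrector $(v^{(j)},q^{(j)})$ is itself $2\pi$-periodic in $y'$, so I may expand $v^{(j)}(y',y_3)=\sum_{k\in\Z^2}\hat v_k(y_3)e^{ik\cdot y'}$ and solve the Stokes system mode-by-mode in $\{y_3>0\}$. For $k=0$, the one-dimensional ODE and boundedness at infinity force $\hat v_0$ to be constant in $y_3$; moreover, applying the divergence theorem to $v^{(j)}$ on one period of the strip $\{\gamma(y')<y_3<0\}$ and using $v^{(j)}(y',\gamma)=-\gamma e_j$ yields $\int v^{(j)}_3(y',0)\dd y'=-\int\gamma\partial_j\gamma\dd y'=-\tfrac{1}{2}\int\partial_j(\gamma^2)\dd y'=0$ by periodicity, so $\hat v_{0,3}=0$, and I set $\alpha^{(j)}:=\hat v_0\in\R^2\times\{0\}$. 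For $k\neq 0$, the bounded solutions of the mode-$k$ Stokes ODE are $(A_k+B_ky_3)e^{-|k|y_3}$ with $A_k,B_k$ controlled by the trace at $y_3=0$; since $|k|\ge 1$, one has $(1+y_3)e^{-|k|y_3}\le Ce^{-y_3/2}$ for $y_3>1$, and Parseval together with the trace norm gives the $L^2$ decay of $v^{(j)}-\alpha^{(j)}$ at rate $e^{-y_3/2}$ in terms of $\|v^{(j)}(\cdot,0)\|_{L^2((0,2\pi)^2)}$. An interior Stokes regularity argument on unit balls in $\{y_3>1\}$ upgrades this to the pointwise estimate \eqref{est1.prop.per.BL}, and the same Fourier analysis (the harmonic pressure having vanishing zero mode after normalization) handles the $y_3|q^{(j)}|$ contribution.

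For the symmetry part of (ii), introduce $V^{(j)}:=v^{(j)}+y_3e_j$, which vanishes on $\{y_3=\gamma\}$, solves the same Stokes system with pressure $q^{(j)}$, and satisfies $V^{(j)}-y_3e_j\to\alpha^{(j)}$ exponentially by (i). Testing $V^{(j)}$ against $V^{(i)}$ on the truncated period cell $D_L:=\Omega^{\rm bl}\cap((0,2\pi)^2\times(-\infty,L))$ and integrating by parts, the pressure term drops by $\nabla\cdot V^{(i)}=0$, the lateral sides cancel by periodicity, the bottom contributes zero, and the top yields
\[
\int_{D_L}\nabla V^{(j)}:\nabla V^{(i)}\dd y=\int_{(0,2\pi)^2}[\partial_3V^{(j)}-q^{(j)}e_3]\cdot V^{(i)}\big|_{y_3=L}\dd y'.
\]
By (i), $\partial_3V^{(j)}\to e_j$ and $q^{(j)}\to 0$ exponentially while $V^{(i)}\to\alpha^{(i)}+Le_i$, so the right-hand side equals $(2\pi)^2(\alpha^{(i)}_j+L\delta_{ij})+o(1)$ as $L\to\infty$. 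Symmetry of the left-hand side in $i\leftrightarrow j$ then gives $\alpha^{(i)}_j=\alpha^{(j)}_i$, i.e.\ $\overline M=\overline M^{\top}$.

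Summing this identity with weights $a_ia_j$ for $a=(a_1,a_2)\in\R^2\setminus\{0\}$, with $V_a:=\sum a_jV^{(j)}$, $v_a:=\sum a_jv^{(j)}$, and $A:=(a_1,a_2,0)\otimes e_3$, and expanding $|\nabla V_a|^2=|\nabla v_a|^2+2\nabla v_a:A+|A|^2$, a short computation using the trace of $v_a$ at top and bottom produces the identity
\[
\int_{D_\infty}|\nabla v_a|^2\dd y=(2\pi)^2\big(|\bar\gamma||a|^2-a^{\top}\overline M a\big),\qquad \bar\gamma:=\dashint_{(0,2\pi)^2}\gamma<0,
\]
which gives only $a^{\top}\overline M a\le|\bar\gamma||a|^2$. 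The main obstacle is promoting this to strict positivity, which I resolve by introducing the divergence-free comparison field
\[
\phi(y):=\begin{cases}-y_3(a_1,a_2,0)&\text{if }\gamma(y')<y_3<0,\\ 0&\text{if }y_3\ge 0.\end{cases}
\]
One checks that $\phi$ is continuous across $y_3=0$, is $H^1$ in each period cell, has the same bottom trace $-\gamma(a_1,a_2,0)$ as $v_a$, and has energy $\int|\nabla\phi|^2=|a|^2\cdot|\Omega^{\rm bl}\cap((0,2\pi)^2\times\{y_3<0\})|=(2\pi)^2|\bar\gamma||a|^2$. Since $\phi-v_a$ is divergence-free, vanishes on $\{y_3=\gamma\}$, and tends to the constant $-\tilde\alpha_a$ exponentially, integrating $\int_{D_L}\nabla(\phi-v_a):\nabla v_a$ by parts against the Stokes equation for $v_a$ produces only a top boundary term of the form (bounded)$\times$(exponentially small), hence $o(1)$ as $L\to\infty$. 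The orthogonality identity $\int|\nabla\phi|^2=\int|\nabla v_a|^2+\int|\nabla(\phi-v_a)|^2$ therefore gives
\[
(2\pi)^2\,a^{\top}\overline M a=\int|\nabla\phi|^2-\int|\nabla v_a|^2=\int_{D_\infty}|\nabla(\phi-v_a)|^2\ge 0,
\]
and strict positivity for $a\neq 0$ follows because $\partial_3\phi$ has a nonzero jump $(a_1,a_2,0)$ at $y_3=0$ while $v_a$ is smooth there by interior Stokes regularity, so $\phi-v_a$ cannot be constant. This choice of $\phi$—non-smooth across $y_3=0$ but still admissible in the Dirichlet energy—is the crux of the positive definiteness.
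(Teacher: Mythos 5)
Your proof is correct, and while part (i) follows essentially the paper's own route (Fourier series in $y'$ on $\{y_3>0\}$, the constant zero mode giving $\alpha^{(j)}$, exponential decay of the modes $|k|\ge 1$, and $\alpha^{(j)}_3=0$ from the divergence-free condition — your divergence-theorem computation over a single period cell is a slightly cleaner version of the paper's large-box argument with $t=2\pi N$), your part (ii) takes a genuinely different path. The paper works entirely on the bounded region $\{\gamma(y')<y_3<0\}$ and expresses $(2\pi)^2 X^\top\overline M X$ as the Dirichlet energy of $X_1v^{(1)}+X_2v^{(2)}+y_3(X,0)^\top$ there plus the periodic Dirichlet-to-Neumann quadratic form at $y_3=0$; symmetry then comes from self-adjointness of ${\rm DN_{per}}$, nonnegativity from nonnegativity of the DN form, and strict positivity from a rigidity argument (the putative minimizer would be affine below $y_3=0$ and constant above, contradicting interior smoothness). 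You instead avoid the DN operator altogether: symmetry comes from letting $L\to\infty$ in a Green identity on the tall truncated cell $D_L$, and positive definiteness from comparing $v_a$ with the explicit divergence-free competitor $\phi$ (equal to $-y_3(a_1,a_2,0)^\top$ below $y_3=0$ and $0$ above), which yields the identity $(2\pi)^2a^\top\overline M a=\int|\nabla(\phi-v_a)|^2\ge 0$ with strictness because $\nabla\phi$ jumps across $y_3=0$ while $v_a$ is smooth there — the same rigidity mechanism as the paper's, but packaged variationally, and you get the complementary bound $a^\top\overline M a\le|\overline\gamma|\,|a|^2$ for free. The trade-off is that your argument needs exponential decay of $\nabla v^{(j)}$ and $q^{(j)}$ at infinity (which \eqref{est1.prop.per.BL} plus interior Stokes estimates indeed provide, but you should say so explicitly) and a justification of the Green identities on $D_L$ via the weak formulation, since the bottom boundary is only Lipschitz; this is routine and at the same level of detail the paper itself leaves implicit, so I see no genuine gap.
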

%
\begin{proof}
(i) We give the proof only when $j=1$ since the case $j=2$ can be treated in a similar manner. The proof is based on the Fourier series expansion of $(v^{(1)}, q^{(1)})$ in $(y_1, y_2)$. For a $2\pi$-periodic function $f(y)$ in $y_1$ and $y_2$, we define $\hat{f}_k=\hat{f}_k(y_3)$ for $k=(k_1,k_2)\in\Z^2$ by
\begin{align*}
\hat{f}_k(y_3) 
= \frac{1}{(2\pi)^2} \int_{(0,2\pi)^2} f(y',y_3) e^{-ik\cdot y'} \dd y'\,,\quad y_3\ge0\,.
\end{align*}
Let us write $(v,q)=(v^{(1)},q^{(1)})$ and $\alpha^{(1)}=\alpha$ for simplicity. Since $(v, q)|_{y_3>0}$ solves the equations \eqref{S.half} in Subsection \ref{subsec.half.prob} with the boundary data $b(y')=v(y',0)$, in a similar way as in the uniqueness step in the proof of Proposition \ref{prop.WP.S.half}, we obtain the Fourier series representation of $(v, q)|_{y_3>0}$:
\begin{equation}\label{rep1.proof.prop.per.BL}
\begin{split}
v(y) & = 
\hat{b}_0
+ \sum_{k\in\Z^2\setminus\{(0,0)\}}
\bigg( \hat{b}_k e^{-|k|y_3} 
+ \begin{pmatrix} -ik \\ |k| \end{pmatrix}
(\hat{b}_{k,3} - i \frac{k}{|k|}\cdot \hat{b}'_k) y_3 e^{-|k|y_3}
\bigg) e^{ik\cdot y'}\,, \\
q(y) & = \sum_{k\in\Z^2\setminus\{(0,0)\}} 
\bigg( 2 |k| (\hat{b}_{k,3} - i \frac{k}{|k|}\cdot \hat{b}'_k) e^{-|k|y_3} \bigg) e^{ik\cdot y'}\,.
\end{split}
\end{equation}
Then by a direct computation and using $\|b\|_{L^2((0,2\pi)^2)}=\|v(\cdot, 0)\|_{L^2((0,2\pi)^2)}$ we see that
\begin{equation}\label{est1.proof.prop.per.BL}
\begin{split}
& |v(y) - \alpha|
\le C \|v(\cdot, 0)\|_{L^2((0,2\pi)^2)} e^{-\frac{y_3}{2}}\,, \quad y_3>1\,, \\
& |q(y)|
\le C \|v(\cdot, 0)\|_{L^2((0,2\pi)^2)} \frac{e^{-\frac{y_3}{2}}}{y_3}\,, \quad y_3>1\,,
\end{split}
\end{equation}
where we have set $\alpha=\hat{b}_{0}$ and $C$ is a numerical constant. Hence the proof of (i) is complete if we show $\alpha_3=\hat{b}_{0,3}=0$. From $\nabla \cdot (v + y_3 {\bf e}_1)=0$, we have for any $t\in(0,\infty)$,
\begin{align*}
\int_{(0,t)^2} \int_{\gamma(y')}^{0} 
\big( \partial_1 (v_1(y) + y_3) + \partial_2 v_2(y) + \partial_3 v_3(y) \big) \dd y_3 \dd y' = 0\,.
\end{align*}
Then an integration by parts leads to
\begin{align*}
&\int_{(0,t)^2} b_3(y',0) \dd y' \\
& = 
-\int_0^t \bigg(\int_{\gamma(t,y_2)}^0 v_1(t,y_2,y_3) \dd y_3 
- \int_{\gamma(0,y_2)}^0 v_1(0,y_2,y_3) \dd y_3 \bigg) \dd y_2 \\
& \quad
- \frac{t\big(\gamma(0,y_2)^2 - \gamma(t,y_2)^2 \big)}{2} \\
& \quad
- \int_0^t \bigg(\int_{\gamma(y_1,t)}^0 v_2(y_1,t,y_3) \dd y_3 
- \int_{\gamma(y_1,0)}^0 v_2(y_1,0,y_3) \dd y_3 \bigg) \dd y_1\,.
\end{align*}
Thus, by setting $t=2\pi N$ with $N\in\N$, we see that $N^2 \,\hat{b}_{0,3} = O(N)$ and consequently that $\hat{b}_{0,3}=0$ in the limit $N\to\infty$. Hence we obtain \eqref{est1.prop.per.BL} from \eqref{est1.proof.prop.per.BL} and the proof of (i) is complete. \\
\noindent (ii) Firstly we express $\alpha^{(j)}_i$ in terms of $v^{(j)}(y',0)$. For $\alpha^{(j)}_j$ from the equations we have
\begin{align*}
\int_{(0,2\pi)^2} \int_{\gamma(y')}^{0} 
(-\Delta v^{(j)}(y) + \nabla q^{(j)}(y)) \cdot (v^{(j)}(y) + y_3 {\bf e}_j) \dd y_3 \dd y' = 0\,.
\end{align*}
Then by integration by parts we get
\begin{align*}
(2\pi)^2\alpha^{(j)}_j& =\int_{(0,2\pi)^2} v^{(j)}_j(y',0) \dd y'\\
& = \int_{(0,2\pi)^2} \int_{\gamma(y')}^{0} |\nabla (v^{(j)}(y) + y_3 {\bf e}_j)|^2 \dd y_3 \dd y' \\
& \quad
+ \int_{(0,2\pi)^2} \big(-\partial_3 v^{(j)}(y',0) + q^{(j)}(y',0) {\bf e}_3 \big) 
\cdot v^{(j)}(y',0) \dd y'\\
& = \int_{(0,2\pi)^2} \int_{\gamma(y')}^{0} |\nabla (v^{(j)}(y) + y_3 {\bf e}_j)|^2 \dd y_3 \dd y' \\
& \quad
+\langle {\rm DN_{per}} (v^{(j)}(y',0)),v^{(j)}(y',0) \rangle_{H^{-\frac12}, H^{\frac12}}\,.
\end{align*}
Note that the Dirichlet-to-Neumann operator ${\rm DN_{\rm per}}$ can be represented as
\begin{align*}
{\rm DN_{per}} (v^{(j)}(y',0))
= \sum_{k\in\Z^2\setminus\{(0,0)\}}
\bigg( |k| \hat{b}^{(j)}_k 
+ \begin{pmatrix} ik \\ |k| \end{pmatrix}
\left(\hat{b}^{(j)}_{k,3} 
- i \frac{k}{|k|}\cdot \hat{b}^{(j)'}_k\right) \bigg) e^{ik\cdot y'}
\end{align*}
in this periodic setting by using \eqref{rep1.proof.prop.per.BL}. For $\alpha^{(1)}_2$ and $\alpha^{(2)}_1$ from the equations again we have
\begin{align*}
\int_{(0,2\pi)^2} \int_{\gamma(y')}^{0} 
(-\Delta v^{(2)}(y) + \nabla q^{(2)}(y)) \cdot (v^{(1)}(y) + y_3 {\bf e}_1) \dd y_3 \dd y' = 0\,.
\end{align*}
Then by integration by parts we have
\begin{equation*}
\begin{split}
(2\pi)^2 \alpha^{(1)}_2 & = \int_{(0,2\pi)^2} v^{(1)}_2(y',0) \dd y' \\
& = 
\int_{(0,2\pi)^2} \int_{\gamma(y')}^{0} 
\nabla (v^{(2)}(y) + y_3 {\bf e}_2) \cdot \nabla (v^{(1)}(y) + y_3 {\bf e}_1) \dd y_3 \dd y' \\
& \quad
+ \langle {\rm DN_{per}}(v^{(2)}(y',0)),v^{(1)}(y',0) \rangle_{H^{-\frac12}, H^{\frac12}}\\
&= (2\pi)^2 \alpha^{(2)}_1\,,
\end{split}
\end{equation*}
where we have used the relation
\begin{align*}
\langle {\rm DN_{per}} (v^{(2)}(y',0)),v^{(1)}(y',0) \rangle_{H^{-\frac12}, H^{\frac12}}
= \langle {\rm DN_{per}} (v^{(1)}(y',0)),v^{(2)}(y',0) \rangle_{H^{-\frac12}, H^{\frac12}}\,,
\end{align*}
which can be verified by a direct computation. 
Let us take $X=(X_1,X_2)^\top\in\R^2$. Then, by setting $u=X_1 v^{(1)}+ X_2 v^{(2)}$ and $a(y')=u(y',0)$, we calculate $X^\top \overline MX$, ${\overline M}=(\alpha^{(j)}_i)_{1\le i,j\le 2}$, as 
\begin{equation}\label{est2.proof.prop.per.BL}
\begin{split}
X^\top \overline MX
&= \frac{1}{(2\pi)^2} \int_{(0,2\pi)^2} \int_{\gamma(y')}^{0} 
|\nabla (u(y) + y_3 (X,0)^\top)|^2 \dd y_3 \dd y' \\
& \quad
+ \frac{1}{(2\pi)^2}\langle {\rm DN_{per}}(u(y',0)),u(y',0)\rangle_{H^{-\frac12}, H^{\frac12}}\,,
\end{split}
\end{equation}
which implies $X^\top \overline MX\ge0$ by non-negativity of the Dirichlet-to-Neumann operator. Next let us assume that $X^\top\overline MX=0$ for some $X\in\R^2\setminus\{0\}$. Then from \eqref{est2.proof.prop.per.BL} we see that $u=-y_3 (X,0)^\top+C$ with some $C\in\R^3$ on $\Omega^{{\rm bl},-}$, and on the other hand, that $u=\hat{a}_{0}$ on $\R^3_+$ from the representation \eqref{rep1.proof.prop.per.BL} since $\hat{a}_k=0$ for all $k\in\Z^2\setminus\{(0,0)\}$. This contradicts the fact that $u$ is smooth away from the boundary due to the nontriviality of $X$. The proof of Proposition \ref{prop.per.BL} is complete.
\end{proof}
%
\subsection{Some useful estimates}\label{subsec.useful.BL}
%
\noindent In this subsection we prove an easy lemma useful in estimating $v^{(j)}$.
%
\begin{lemma}\label{lem.est.boundarylayer}
Fix $j\in\{1,2\}$ and let $\ep\in(0,1)$ and $r\in[\ep,1]$. Then we have
\begin{align}\label{est1.lem.est.boundarylayer}
& \int_{B^\ep_{r,+}(0)}
\big|(\nabla_y v^{(j)})(\frac{x}{\ep}) \big|^2 \dd x
\le
C \ep r^2\,,
\end{align}
and for $m\in\{0,1,2\}$,
\begin{align}\label{est2.lem.est.boundarylayer}
\int_{B^\ep_{r,+}(0)}
\big|v^{(j)}(\frac{x}{\ep}) \big|^{2+m} \dd x
\le
\frac{Cr^{4-\frac{m}2}}{\ep^{1+\frac{m}2}}\,,
\end{align}
where the constant $C$ is independent of $\ep$ and $r$ and depends on $\|\gamma\|_{W^{1,\infty}(\R^2)}$.
\end{lemma}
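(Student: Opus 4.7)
The natural first move is to reduce everything to a single rescaled domain by the change of variables $y=x/\ep$, so that $B^\ep_{r,+}(0)$ maps to
\[
\widetilde B_{r/\ep}:=\{y\in\R^3 \,|\, y'\in(-r/\ep,r/\ep)^2,\ \gamma(y')<y_3<\gamma(y')+r/\ep\},
\]
with $\dd x=\ep^3\dd y$. After this rescaling, \eqref{est1.lem.est.boundarylayer} becomes $\int_{\widetilde B_{r/\ep}}|\nabla v^{(j)}|^2\le C(r/\ep)^2$, and \eqref{est2.lem.est.boundarylayer} becomes $\int_{\widetilde B_{r/\ep}}|v^{(j)}|^{2+m}\le C(r/\ep)^{4-m/2}\ep^{-1-m/2}\cdot\ep^{-3}$, i.e. a statement purely about $v^{(j)}$ on a bounded Lipschitz domain of tangential and vertical size $r/\ep\ge 1$.

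\textbf{Gradient estimate \eqref{est1.lem.est.boundarylayer}.} I would cover the tangential slab $(-r/\ep,r/\ep)^2$ by $O((r/\ep)^2)$ unit squares $\eta+(0,1)^2$, $\eta\in\Z^2$, and on each extend the vertical range $[\gamma(y'),\gamma(y')+r/\ep]$ to $[\gamma(y'),\infty)$. Summing the uniform bound \eqref{est1.prop.BL} over these translates immediately gives $\int_{\widetilde B_{r/\ep}}|\nabla v^{(j)}|^2\le C(r/\ep)^2$, and multiplying by $\ep^3$ yields \eqref{est1.lem.est.boundarylayer}.

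\textbf{The $L^2$ piece of \eqref{est2.lem.est.boundarylayer} ($m=0$).} Starting from the boundary identity $v^{(j)}(y',\gamma(y'))=-\gamma(y'){\bf e}_j$ with $|\gamma|\le 1$, the fundamental theorem of calculus in $y_3$ and Cauchy--Schwarz give, for $y\in\widetilde B_{r/\ep}$,
\[
|v^{(j)}(y',y_3)|^2\le 2+2\,(y_3-\gamma(y'))\int_{\gamma(y')}^{\gamma(y')+r/\ep}|\partial_3 v^{(j)}(y',s)|^2\dd s.
\]
Integrating over $\widetilde B_{r/\ep}$ and using $|\widetilde B_{r/\ep}|\lesssim(r/\ep)^3$ together with the gradient estimate above yields $\int_{\widetilde B_{r/\ep}}|v^{(j)}|^2\le C(r/\ep)^3+C(r/\ep)^2\cdot(r/\ep)^2\le C(r/\ep)^4$ (using $r/\ep\ge 1$). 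Multiplication by $\ep^3$ gives the $m=0$ bound.

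\textbf{The $L^{2+m}$ pieces for $m=1,2$.} I would use Sobolev embedding $H^1\hookrightarrow L^6$ on the bounded Lipschitz domain $\widetilde B_{r/\ep}$, whose Lipschitz character is controlled by $\|\gamma\|_{W^{1,\infty}(\R^2)}$. Rescaling to a unit-size domain (and tracking the homogeneities) produces the scale-invariant inequality
\[
\|v^{(j)}\|_{L^6(\widetilde B_{r/\ep})}\le C\bigl((\ep/r)\|v^{(j)}\|_{L^2(\widetilde B_{r/\ep})}+\|\nabla v^{(j)}\|_{L^2(\widetilde B_{r/\ep})}\bigr)\le C(r/\ep),
\]
where the last inequality uses the $L^2$ and gradient bounds just established. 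Interpolating $L^{2+m}$ between $L^2$ and $L^6$ then produces $\|v^{(j)}\|_{L^{2+m}(\widetilde B_{r/\ep})}^{2+m}\le C(r/\ep)^{3+m/2}$ for $m\in\{1,2\}$, and multiplying by $\ep^3$ gives \eqref{est2.lem.est.boundarylayer} after observing that $r\le 1$ absorbs the remaining powers of $r$.

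The only point requiring genuine care is the scale-invariant Sobolev step, since one must be sure the Sobolev constant is controlled purely by the Lipschitz norm of $\gamma$ and not by the (large) diameter $r/\ep$; this is handled by dilating $\widetilde B_{r/\ep}$ to a unit-size Lipschitz domain whose Lipschitz constant equals $\|\gamma\|_{W^{1,\infty}(\R^2)}$ and tracking how $L^2$, $L^6$ and $\nabla$ scale under this dilation.
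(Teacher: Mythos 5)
Your argument is essentially the paper's own: both proofs rescale by $y=x/\ep$, sum the locally uniform energy bound \eqref{est1.prop.BL} over $O((r/\ep)^2)$ unit cells to obtain \eqref{est1.lem.est.boundarylayer}, use the fundamental theorem of calculus in the vertical variable plus Cauchy--Schwarz for the $m=0$ case, and then a Sobolev embedding followed by H\"older interpolation for $m=1,2$. The only real difference is how the $L^6$ bound is justified: the paper subtracts the cut-off corrector $\gamma(y'){\bf e}_j\varphi(y_3)$ so that the resulting function vanishes on the rough boundary and the homogeneous inequality $\|f\|_{L^6}\le C\|\nabla f\|_{L^2}$ applies (the same subtraction it uses in the FTC step, where you instead invoke the trace $v^{(j)}(y',\gamma(y'))=-\gamma(y'){\bf e}_j$ with $|\gamma|\le 1$ directly, which is equivalent), whereas you use the inhomogeneous embedding $H^1\hookrightarrow L^6$ on the rescaled domain in scale-invariant form, justified by dilating to a unit-size graph domain whose Lipschitz constant is still $\|\nabla\gamma\|_{L^\infty(\R^2)}$. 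Both routes work and rest on comparable standard facts; you correctly single out the uniformity of the Sobolev constant in $(\ep,r)$ as the point needing care.

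One arithmetic slip, which does not affect the conclusion: interpolating with your bounds $\|v^{(j)}\|_{L^2(\widetilde B_{r/\ep})}\le C(r/\ep)^2$ and $\|v^{(j)}\|_{L^6(\widetilde B_{r/\ep})}\le C(r/\ep)$ gives $\|v^{(j)}\|_{L^{2+m}}^{2+m}\le \|v^{(j)}\|_{L^2}^{(4-m)/2}\,\|v^{(j)}\|_{L^6}^{3m/2}\le C(r/\ep)^{4+m/2}$, not $C(r/\ep)^{3+m/2}$ as you wrote. Multiplying by $\ep^3$ yields $Cr^{4+m/2}\ep^{-1-m/2}\le Cr^{4-m/2}\ep^{-1-m/2}$ because $r\le 1$, which is exactly \eqref{est2.lem.est.boundarylayer}; this loss of extra powers of $r$ is of the same harmless kind the paper itself accepts when it bounds $\ep^{-\frac12}r+\ep^{\frac16}r^{\frac13}$ by $\ep^{-\frac12}r^{\frac13}$ in its $L^6$ step.
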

%
\begin{proof}
By the change of variables $y=x/\ep$, we see that from \eqref{est1.prop.BL} in Proposition \ref{prop.BL}, 
\begin{align}\label{est1.proof.lem.est.boundarylayer}
\begin{split}
\int_{B^\ep_{r,+}(0)}
\big|(\nabla_y v^{(j)})(\frac{x}{\ep}) \big|^2 \dd x
& = 
\int_{(-r,r)^2} \int_{\ep\gamma(\frac{x'}{\ep})}^{\ep\gamma(\frac{x'}{\ep})+r}
\big|(\nabla_y v^{(j)})(\frac{x}{\ep}) \big|^2 \dd x_3 \dd x' \\
& = 
\ep^3 \int_{(-\frac{r}{\ep}, \frac{r}{\ep})^2} 
\int_{\gamma(y')}^{\gamma(y')+\frac{r}{\ep}}
|(\nabla_y v^{(j)})(y)|^2 \dd y_3 \dd y' \\
& \le 
C \ep^3 (\frac{r}{\ep})^2\,,
\end{split}
\end{align}
which implies the estimate \eqref{est1.lem.est.boundarylayer}. Next we take a cut-off function $\varphi=\varphi(t)\in C^\infty_0(\R)$ such that $\supp\varphi\subset(-2,1)$ and $\varphi(t)=1$ on $[-1,\frac12]$. Then we have
\begin{align}\label{est2.proof.lem.est.boundarylayer}
\begin{split}
&\int_{B^\ep_{r,+}(0)}
\big|v^{(j)}(\frac{x}{\ep}) \big|^2 \dd x \\
& =
\ep^3 \int_{(-\frac{r}{\ep}, \frac{r}{\ep})^2} 
\int_{\gamma(y')}^{\gamma(y')+\frac{r}{\ep}}
|v^{(j)}(y)|^2 \dd y_3 \dd y' \\
& \le
C \ep^3 \int_{(-\frac{r}{\ep}, \frac{r}{\ep})^2} 
\int_{\gamma(y')}^{\gamma(y')+\frac{r}{\ep}}
|v^{(j)}(y) + \gamma(y') {\bf e}_j \varphi(y_3) |^2 \dd y_3 \dd y' \\
& \quad
+ C \ep^3 \int_{(-\frac{r}{\ep}, \frac{r}{\ep})^2} 
\int_{\gamma(y')}^{\gamma(y')+\frac{r}{\ep}}
|\gamma(y') {\bf e}_j \varphi(y_3) |^2 \dd y_3 \dd y'\,.
\end{split}
\end{align}
Since $v^{(j)}(y) + \gamma(y') {\bf e}_j \varphi(y_3)$ vanishes on the boundary $y_3=\gamma(y')$, we have
\begin{align}\label{est3.proof.lem.est.boundarylayer}
\begin{split}
|v^{(j)}(y) + \gamma(y') {\bf e}_j \varphi(y_3) | 
& = \Big| \int_{\gamma(y')}^{y_3} 
\frac{\dd}{\dd t} \big[v^{(j)}(y',t) + \gamma(y') {\bf e}_j \varphi(t) \big] \dd t \Big| \\
& \le 
\int_{\gamma(y')}^{y_3} |(\partial_3 v^{(j)})(y',t)| \dd t
+ \int_{\gamma(y')}^{ 1} |\gamma(y') {\bf e}_j \frac{\dd \varphi}{\dd t}(t)| \dd t \\
& \le 
\big(y_3-\gamma(y')\big)^\frac12 
\Big( \int_{\gamma(y')}^{\infty} |\nabla v(y',t)|^2 \dd t \Big)^\frac12 
+ C\,,
\end{split}
\end{align}
where the H\"older inequality is applied in the last line and the constant $C$ depends on $\|\gamma\|_{W^{1,\infty}(\R^2)}$ 
and $\|\frac{\dd \varphi}{\dd t}\|_{L^{\infty}(\R)}$. 
Thus after inserting  \eqref{est3.proof.lem.est.boundarylayer} to \eqref{est2.proof.lem.est.boundarylayer}, by a similar computation as in \eqref{est1.proof.lem.est.boundarylayer}, we see that
\begin{align*}
\int_{B^\ep_{r,+}(0)}
\big|v^{(j)}(\frac{x}{\ep}) \big|^2 \dd x
\le 
C\Big( \ep^3 (\ep^{-1}r)^4 + \ep^3 (\ep^{-1}r)^3 \Big)\leq C\ep^{-1}{r^4}\,.
\end{align*}
Thus we obtain \eqref{est2.lem.est.boundarylayer} for $m=0$. On the other hand, by the Sobolev inequality and \eqref{est1.lem.est.boundarylayer} we find
\begin{align*}
&\bigg(\int_{B^\ep_{r,+}(0)} \big|v^{(j)}(\frac{x}{\ep}) \big|^6 \dd x \bigg)^\frac16 \\
&\le
\bigg(\int_{B^\ep_{r,+}(0)} \big| v^{(j)}(\frac{x}{\ep}) 
+ \gamma(\frac{x'}{\ep}) {\bf e}_j \varphi(\frac{x_3}{\ep}) \big|^6 \dd x \bigg)^\frac16 \\
& \quad
+ \bigg(\int_{B^\ep_{r,+}(0)} \big| \gamma(\frac{x'}{\ep}) {\bf e}_j \varphi(\frac{x_3}{\ep}) \big|^6 \dd x \bigg)^\frac16 \\
& \le
C\bigg(\int_{B^\ep_{r,+}(0)} 
\big|\nabla_x \big( v^{(j)}(\frac{x}{\ep}) 
+ \gamma(\frac{x'}{\ep}) {\bf e}_j \varphi(\frac{x_3}{\ep}) \big) \big|^2 \dd x \bigg)^\frac12
+ C (\ep r^2)^\frac16 \\
& \leq C(\ep^{-\frac12}r+\ep^\frac16r^\frac13)\le
C \ep^{-\frac12} r^\frac13\,.
\end{align*}
Then, by the H\"older inequality and \eqref{est2.lem.est.boundarylayer} with $m=0$, we obtain \eqref{est2.lem.est.boundarylayer} for $m=1$ from
\begin{align*}
\int_{B^\ep_{r,+}(0)}
\big|v^{(j)}(\frac{x}{\ep}) \big|^3 \dd x
& \le
\bigg(\int_{B^\ep_{r,+}(0)} \big|v^{(j)}(\frac{x}{\ep}) \big|^6 \dd x \bigg)^\frac14
\bigg(\int_{B^\ep_{r,+}(0)} \big|v^{(j)}(\frac{x}{\ep}) \big|^2 \dd x \bigg)^\frac34 \\
& \le
C (\ep^{-\frac12}r^\frac13)^\frac32
(\ep^{-1}r^4)^\frac34\,.
\end{align*}
We can prove \eqref{est2.lem.est.boundarylayer} for $m=2$ in a similar way by using \eqref{est2.lem.est.boundarylayer} with $m=1$. The proof is complete.
\end{proof}
%
\section{Regularity for the Stokes equations}\label{sec.linear}
%
\noindent In this subsection we consider the Stokes equations
\begin{equation}\tag{S$^\ep$}\label{S.ep}
\left\{
\begin{array}{ll}
-\Delta u^\ep+\nabla p^\ep=0&\mbox{in}\ B^\ep_{1,+}(0)\\
\nabla\cdot u^\ep=0&\mbox{in}\ B^\ep_{1,+}(0)\\
u^\ep=0&\mbox{on}\ \Gamma^\ep_1(0)
\end{array}
\right.
\end{equation}
in order to demonstrate how the compactness and iteration arguments work in a simpler setting. We note that a weak formulation for \eqref{S.ep} can be defined in a similar manner as \eqref{intro.def2} for \eqref{intro.NS.ep} in the introduction. Our goal in this section is to prove the following linear version of Theorem \ref{theo.lip.nonlinear}.
%
\begin{theorem}[linear estimate]\label{theo.lip}
There exists a constant $\ep^{(3)} \in(0,1)$ depending on $\|\gamma\|_{W^{1,\infty}(\R^2)}$ such that the following statement holds. For all $\ep\in(0,\ep^{(3)}]$ and $r\in[\ep/\ep^{(3)}, 1]$, any weak solution $u^\ep=(u^\ep_1(x), u^\ep_2(x), u^\ep_3(x))^\top \in H^1(B^\ep_{1,+}(0))^3$ to \eqref{S.ep} satisfies
\begin{align}\label{est.theo.lip}
\bigg(\dashint_{B^\ep_{r,+}(0)} |u^\ep|^2 \bigg)^\frac12 
\le
C^{(3)} r \bigg(\dashint_{B^\ep_{1,+}(0)} |u^\ep|^2 \bigg)^\frac12\,,
\end{align}
where the constant $C^{(3)}$ is independent of $\ep$ and $r$, and depends on $\|\gamma\|_{W^{1,\infty}(\R^2)}$.
\end{theorem}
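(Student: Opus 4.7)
The proof uses the compactness method of Avellaneda--Lin \cite{AL1,AL2}, adapted to the bumpy Stokes setting as in Kenig--Prange \cite{KP1,KP2}, together with the boundary layer correctors $v^{(j)}$ of Section \ref{sec.BL}: these correctors are precisely what is needed to build approximations of $u^\ep$ that respect the homogeneous Dirichlet condition on the rough graph $\Gamma^\ep_r(0)$, so that the approximations can themselves be subtracted off and the argument iterated. In the compactness step the limit will be a Stokes solution in the flat half-cube $B_{1,+}(0)$ vanishing at $x_3=0$, for which Appendix \ref{appendix.Regularity} provides a $C^{1,\sigma}$ Taylor expansion for some $\sigma\in(0,1)$.

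Fix $\mu\in(0,\sigma)$. The key one-step improvement is the following: there exist $\theta\in(0,1/4)$ and $\ep_0>0$ such that for $\ep\in(0,\ep_0]$ and every solution $u^\ep$ of \eqref{S.ep} with $\dashint_{B^\ep_{1,+}(0)}|u^\ep|^2\le 1$, one can find $c^\ep_1,c^\ep_2\in\R$, uniformly bounded, such that the ``Navier polynomial''
\begin{align*}
P^\ep(x)=\sum_{j=1}^2 c^\ep_j\bigl(x_3{\bf e}_j+\ep\, v^{(j)}(x/\ep)\bigr)
\end{align*}
satisfies $\bigl(\dashint_{B^\ep_{\theta,+}(0)}|u^\ep-P^\ep|^2\bigr)^{1/2}\le\theta^{1+\mu}$. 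This is proved by contradiction: along a sequence $\ep_n\to 0$ of counterexamples, the Caccioppoli inequality of Appendix \ref{appendix.Caccioppoli} gives uniform $H^1$ control on $B^{\ep_n}_{1/2,+}(0)$; extending by zero through $\Gamma^{\ep_n}_{1/2}$ and using \eqref{e.estpressure} to control the pressure, one extracts a weak $H^1$/strong $L^2$ limit $u^0$ that solves the Stokes system in $B_{1/2,+}(0)$ with $u^0|_{x_3=0}=0$. Appendix \ref{appendix.Regularity} then yields $\bigl(\dashint_{B_{\theta,+}(0)}|u^0-\sum c_j x_3{\bf e}_j|^2\bigr)^{1/2}\le C\theta^{1+\sigma}$ with $c_j=\partial_3 u^0_j(0)$ (the third component vanishes since $\nabla\cdot u^0=0$ together with $u^0_1|_{x_3=0}=u^0_2|_{x_3=0}=0$ forces $\partial_3 u^0_3(x',0)=0$), while Lemma \ref{lem.est.boundarylayer} shows that $\bigl(\dashint|\ep\, v^{(j)}(x/\ep)|^2\bigr)^{1/2}$ is of order $\sqrt{\ep\theta}$ and vanishes in the limit. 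Choosing $\theta$ small enough so that $C\theta^{\sigma-\mu}\le 1/2$ produces the desired contradiction.

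The crucial algebraic observation for iterating is that $P^\ep$ is itself a weak solution of the Stokes equations that vanishes on $\Gamma^\ep_r(0)$: divergence-freeness comes from $\nabla\cdot v^{(j)}=0$, the Stokes equation from \eqref{BLj}, and the vanishing Dirichlet trace from the boundary condition $v^{(j)}(y',\gamma(y'))=-\gamma(y'){\bf e}_j$. Hence $u^\ep-P^\ep$ again solves \eqref{S.ep}, and after rescaling $w(y)=\theta^{-(1+\mu)}(u^\ep-P^\ep)(\theta y)$ one obtains a unit-normalized Stokes solution on $B^{\ep/\theta}_{1,+}(0)$ to which the one-step improvement applies as long as $\ep/\theta\le\ep_0$. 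Iterating up to the largest $k$ with $\ep/\theta^{k-1}\le\ep_0$ (equivalently $\theta^k\gtrsim\ep$) one obtains $P^\ep_k=\sum_j c^{(k)}_j\bigl(x_3{\bf e}_j+\ep v^{(j)}(x/\ep)\bigr)$ whose coefficients are geometric sums in $\theta^{\ell\mu}$ of bounded terms, hence uniformly bounded in $k$, and $\bigl(\dashint_{B^\ep_{\theta^k,+}(0)}|u^\ep-P^\ep_k|^2\bigr)^{1/2}\le\theta^{k(1+\mu)}$. Lemma \ref{lem.est.boundarylayer} gives $\bigl(\dashint_{B^\ep_{\theta^k,+}(0)}|P^\ep_k|^2\bigr)^{1/2}\le C\theta^k$, so the triangle inequality produces the discrete Lipschitz bound $\bigl(\dashint_{B^\ep_{\theta^k,+}(0)}|u^\ep|^2\bigr)^{1/2}\le C\theta^k$; for general $r\in[\ep/\ep^{(3)},1]$ one sandwiches $r$ between $\theta^{k+1}$ and $\theta^k$ and absorbs the volume ratio $\theta^{-3/2}$ into the final constant $C^{(3)}$.

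The hardest point I expect is the compactness argument itself: the rough boundary $\Gamma^{\ep_n}_{1/2}$ converges to the flat $\Gamma_{1/2}(0)$ only in a weak sense, so one must carefully justify that the zero extension of $u^{\ep_n}$ across the graph remains in $H^1$ uniformly (this uses $u^{\ep_n}|_{\Gamma^{\ep_n}}=0$ combined with a Hardy-type estimate in a $W^{1,\infty}$ graph), that the limiting trace on $\{x_3=0\}$ is indeed zero, and that the pressure passes to the limit thanks to \eqref{e.estpressure} to produce the full flat Stokes system for $u^0$. Conceptually, what distinguishes this Lipschitz estimate from the easier H\"older bound of \cite{G1} is precisely the need to subtract the Navier polynomial $P^\ep$ rather than a plain affine function: without the boundary layer corrector, the ansatz would not vanish on $\Gamma^\ep_r(0)$ and the scale-by-scale rescaling in Step 2 would break down.
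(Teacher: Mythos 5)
Your proposal is correct and follows essentially the same route as the paper: a one-step compactness improvement toward the ansatz $\sum_j c_j\bigl(x_3{\bf e}_j+\ep v^{(j)}(x/\ep)\bigr)$ (Lemma \ref{lem.cpt}), iteration by rescaling using that this corrector is an exact Stokes solution vanishing on the bumpy boundary (Lemma \ref{lem.itr}), geometric-series control of the coefficients via the Caccioppoli inequality, and the final sandwiching $r\in(\theta^k,\theta^{k-1}]$ together with Lemma \ref{lem.est.boundarylayer}. The only cosmetic deviations are that the paper passes to the limit in the compactness step using solenoidal test functions rather than the pressure bound \eqref{e.estpressure}, and chooses the coefficients as averaged derivatives $(\overline{\partial_3 u^\ep_j})_{B^\ep_{\theta,+}(0)}$, neither of which changes the argument.
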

%
\noindent 
We prove the compactness and iteration lemmas in Subsection \ref{subsec.cpt.itr} which are essential tools for our argument. We prove Theorem \ref{theo.lip} in Subsection \ref{subsec.linear.proof} using the estimates in Subsections \ref{subsec.per.BL} and \ref{subsec.useful.BL}.
%
\subsection{Compactness and iteration lemmas}\label{subsec.cpt.itr}
%
The compactness lemma is stated as follows. An important ingredient in the proof is that one does not need a higher order expansion for the component $u^\ep_3(x)$ thanks to the divergence-free and the no-slip conditions in the $\ep$-zero limit equations. Along the way, we overcome the difficulty coming from the vectoriality of \eqref{S.ep}. Let $v^{(j)}=v^{(j)}(y)$ be the weak solution to \eqref{BLj} for $j\in\{1,2\}$ provided by Proposition \ref{prop.BL}.
%
\begin{lemma}\label{lem.cpt}
For $\mu\in(0,1)$, there exist constants $\theta\in(0,\frac18)$ and $\ep_\mu\in(0,1)$ depending on $\|\gamma\|_{W^{1,\infty}(\R^2)}$ and $\mu$ such that the following statement holds. For $\ep\in(0,\ep_\mu]$, any weak solution $u^\ep=(u^\ep_1(x), u^\ep_2(x), u^\ep_3(x))^\top \in H^1(B^\ep_{1,+}(0))^3$ to \eqref{S.ep} with
\begin{align}\label{est1.lem.cpt}
\dashint_{B^\ep_{1,+}(0)} |u^\ep|^2 
\le
1
\end{align}
satisfies
\begin{align}\label{est2.lem.cpt}
\dashint_{B^\ep_{\theta,+}(0)}
\big|u^\ep(x)
- \sum_{j=1}^{2}
(\overline{\partial_{3} u^\ep_j})_{B^\ep_{\theta,+}(0)} 
\big( x_3 {\bf e}_j + \ep v^{(j)}(\frac{x}{\ep})\big) \big|^2 \dd x
\le
\theta^{2+2\mu}\,.
\end{align}
\end{lemma}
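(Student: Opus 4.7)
I would prove Lemma~\ref{lem.cpt} by a contradiction/compactness argument in the spirit of Avellaneda--Lin \cite{AL1} and \cite{KP2}. Fix $\mu \in (0,1)$. Suppose that for some $\theta \in (0,1/8)$ to be chosen, there exist sequences $\ep_n \downarrow 0$ and weak solutions $u^{\ep_n}$ of \eqref{S.ep} satisfying \eqref{est1.lem.cpt} but violating \eqref{est2.lem.cpt}; set $c^{\ep_n}_j := (\overline{\partial_3 u^{\ep_n}_j})_{B^{\ep_n}_{\theta,+}(0)}$. The goal is to derive a contradiction for $\theta$ small and $n$ large.

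The Caccioppoli inequality from Appendix~\ref{appendix.Caccioppoli} gives a uniform bound on $\|\nabla u^{\ep_n}\|_{L^2(B^{\ep_n}_{1/2,+}(0))}$. Since $u^{\ep_n}$ vanishes on $\Gamma^{\ep_n}_1$, its extension by zero below the bumpy boundary, denoted $\tilde u^{\ep_n}$, lies in $H^1$ of a fixed flat box containing $B_{1/2,+}(0)$, is still divergence-free, and obeys the same uniform bound. Along a subsequence, $\tilde u^{\ep_n} \rightharpoonup u^*$ weakly in $H^1$ and strongly in $L^2$ on that box. Because the zero-extension region $\{x_3 < \ep_n \gamma(x'/\ep_n)\}$ lies inside $\{x_3<0\}$ and collapses onto $\{x_3\le0\}$, I can pass to the limit in the weak formulation using test functions supported in $\{x_3 > 0\}$ to conclude that $u^*$ solves the Stokes equations on $B_{1/2,+}(0)$ with no-slip trace $u^*|_{\{x_3=0\}} = 0$. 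Appendix~\ref{appendix.Regularity} then yields smoothness of $u^*$ up to the flat boundary near the origin, with constants depending only on $\|u^*\|_{L^2(B_{1/2,+}(0))} \le C(\|\gamma\|_{W^{1,\infty}(\R^2)})$. The no-slip and divergence-free conditions imply $\partial_3 u^*_3(0) = -(\partial_1 u^*_1+\partial_2 u^*_2)(0) = 0$, so Taylor expansion at the origin yields
\begin{align*}
\Big|u^*(x) - \sum_{j=1}^{2}(\partial_3 u^*_j)(0)\, x_3 {\bf e}_j\Big| \le C_0 |x|^2 \quad \text{on } B_{\theta,+}(0),
\end{align*}
with $C_0$ depending only on $\|\gamma\|_{W^{1,\infty}(\R^2)}$. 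I now fix $\theta \in (0,1/8)$ so that $16 C_0^2 \theta^{2-2\mu} \le 1$, which is feasible because $2-2\mu > 0$.

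To conclude, I decompose
\begin{align*}
u^{\ep_n} - \sum_{j=1}^{2} c^{\ep_n}_j \big(x_3 {\bf e}_j + \ep_n v^{(j)}(\tfrac{\cdot}{\ep_n})\big)
&= (u^{\ep_n} - u^*) + \Big(u^* - \sum_j (\partial_3 u^*_j)(0)\, x_3 {\bf e}_j\Big) \\
&\quad + \sum_j \big((\partial_3 u^*_j)(0) - c^{\ep_n}_j\big) x_3 {\bf e}_j - \sum_j c^{\ep_n}_j \ep_n v^{(j)}(\tfrac{\cdot}{\ep_n})
\end{align*}
and estimate each summand in $L^2$-average over $B^{\ep_n}_{\theta,+}(0)$. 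The first term vanishes as $n\to\infty$ by strong $L^2$ convergence together with $|B^{\ep_n}_{\theta,+}(0) \triangle B_{\theta,+}(0)| \to 0$. The second is bounded by $C_0^2 \theta^4 \le \theta^{2+2\mu}/16$ by the choice of $\theta$. For the third, weak $L^2$ convergence of $\partial_3 u^{\ep_n}$ together with convergence of the domains gives $c^{\ep_n}_j \to c^*_j := \dashint_{B_{\theta,+}(0)} \partial_3 u^*_j$, and smoothness of $u^*$ yields $|c^*_j - (\partial_3 u^*_j)(0)| \le C\theta$; this piece is therefore $o(1) + O(\theta^4)$. Finally, the coefficients $c^{\ep_n}_j$ stay bounded (by $C\theta^{-3/2}$ via Caccioppoli), and Lemma~\ref{lem.est.boundarylayer} with $m=0$ and $r=\theta$ yields $\dashint_{B^{\ep_n}_{\theta,+}(0)} \ep_n^2 |v^{(j)}(x/\ep_n)|^2 \dd x \le C \ep_n \theta \to 0$. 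For $n$ large, the sum of all four contributions is $< \theta^{2+2\mu}$, contradicting the failure of \eqref{est2.lem.cpt}. The threshold $\ep_\mu > 0$ is then read off from this $n$.

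The main obstacle is \emph{correctly identifying the limit problem}: the domains $B^{\ep_n}_{\theta,+}(0)$ themselves vary with $n$, and the no-slip condition on the bumpy graph $\Gamma^{\ep_n}_1$ must be transferred to the flat interface $\{x_3 = 0\}$ in the limit. The clean device is the zero-extension across the bumpy boundary combined with weak/strong compactness on a fixed flat box; that this extension preserves both the divergence-free and the no-slip conditions is what makes the compactness step go through. Everything else is relatively soft: the smoothness of the limit, together with the two a priori inputs (Caccioppoli on the $\ep$-side and Lemma~\ref{lem.est.boundarylayer} on the boundary layer), suffices to beat the target exponent $2+2\mu$ by taking $\theta$ small before $\ep_n$.
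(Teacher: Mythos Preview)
Your proof is correct and follows essentially the same compactness-and-contradiction scheme as the paper: extend by zero across the bumpy boundary, use Caccioppoli to gain uniform $H^1$ bounds on a fixed flat box, extract a limit solving the Stokes system on $B_{1/2,+}(0)$ with no-slip on $\Gamma_{1/2}(0)$, invoke the smooth regularity of that limit (in particular $\partial_3 u^*_3=0$ on the flat boundary via the divergence-free condition) to choose $\theta$, and then close by showing the boundary-layer and convergence remainders are $o(1)$ as $n\to\infty$.

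The only cosmetic difference is that the paper writes the Taylor-type estimate for the limit directly with the averaged slopes $(\overline{\partial_3 u^0_j})_{B_{\theta,+}(0)}$, whereas you use the pointwise values $(\partial_3 u^*_j)(0)$ and then bridge to the averages $c^*_j$ via $|c^*_j-(\partial_3 u^*_j)(0)|\le C\theta$; both give the needed $O(\theta^4)$ control. One small remark: the bound $\|u^*\|_{L^2(B_{1/2,+}(0))}\le C$ is in fact an absolute constant (from \eqref{est1.lem.cpt} and $|B^\ep_{1,+}(0)|=4$), so $C_0$ does not actually depend on $\|\gamma\|_{W^{1,\infty}}$---but this overstatement is harmless.
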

%
\begin{proof}
For given $\mu\in(0,1)$, we choose $\theta\in(0,\frac18)$ in the statement as follows. Let $(u^0, p^0) \in H^1(B_{\frac12,+}(0))^3\times L^2(B_{\frac12,+}(0))$ be a weak solution to the $\ep$-zero limit equations
\begin{equation}\label{eq1.proof.lem.cpt}
\left\{
\begin{array}{ll}
-\Delta u^0+\nabla p^0=0&\mbox{in}\ B_{\frac12,+}(0)\\
\nabla\cdot u^0=0&\mbox{in}\ B_{\frac12,+}(0)\\
u^0=0&\mbox{on}\ \Gamma_\frac12(0)
\end{array}
\right.
\end{equation}
with
\begin{align}\label{est1.proof.lem.cpt}
\int_{B_{\frac12,+}(0)}
|u^0|^2 \le 4\,.
\end{align}
By the regularity theory to \eqref{eq1.proof.lem.cpt} in Appendix \ref{appendix.Regularity} combined with \eqref{est1.proof.lem.cpt}, we see that $u^0\in C^2(\overline{B_{\frac38,+}(0)})^3$. From the no-slip condition in \eqref{eq1.proof.lem.cpt}, we calculate the tangential component $u^0_j$ of $u$ with $j\in\{1,2\}$ as
\begin{align*}
& u^0_j(x) - (\overline{\partial_3 u^0_j})_{B_{\theta,+}(0)}\,x_3 \\
& = \frac{x_3}{|B_{\theta,+}(0)|}
\int_0^1 \int_{B_{\theta,+}(0)} 
\big(\partial_3 u^0_j(x', t x_3) - \partial_3 u^0_j(z) \big) \dd z \dd t\,,
\end{align*}
where $\theta\in(0,\frac14)$ is arbitrary. Thus we see that
\begin{align}\label{est2.proof.lem.cpt}
\dashint_{B_{\theta,+}(0)}
|u^0_j(x) - (\overline{\partial_3 u^0_j})_{B_{\theta,+}(0)}\,x_3|^2 \dd x
\le 
C \theta^4
\end{align}
with a constant $C$ independent of $\theta$. For the normal component $u^0_3$ of $u$, by the divergence-free and no-slip conditions in \eqref{eq1.proof.lem.cpt}, we have
\begin{align*}
u^0_3(x) 
= -x_3 \int_0^1  
\sum_{j=1}^{2} \partial_j u^0_j(x', t x_3) \dd t\,.
\end{align*}
Since $\partial_j u^0_j=0$ on $\Gamma_\frac12(0)$ holds for $j\in\{1,2\}$, we also have
\begin{align*}
u^0_3(x) 
& = -x_3^2 \int_0^1 \int_0^1
\sum_{j=1}^{2} t\,\partial_3 \partial_j u^0_j(x', s t x_3) \dd s \dd t\,.
\end{align*}
Thus there exists a constant $C$ independent of $\theta$ such that for any $\theta\in(0,\frac14)$, 
\begin{align}\label{est3.proof.lem.cpt}
\dashint_{B_{\theta,+}(0)}
|u^0_3|^2
\le 
C \theta^4\,.
\end{align}
Then we choose $\theta\in(0,\frac18)$ in \eqref{est2.proof.lem.cpt} and \eqref{est3.proof.lem.cpt} sufficiently small depending on $\mu$ so that 
\begin{align}\label{est4.proof.lem.cpt}
\begin{split}
& \dashint_{B_{\theta,+}(0)}
\big|u^0(x)
- \sum_{j=1}^{2}
(\overline{\partial_{3} u^0_j})_{B_{\theta,+}(0)} x_3 {\bf e}_j \big|^2 \dd x \\
& =
\,\dashint_{B_{\theta,+}(0)}
|u^0_j(x) - (\overline{\partial_3 u^0_j})_{B_{\theta,+}(0)}\,x_3 |^2 \dd x
\,+\, \dashint_{B_{\theta,+}(0)}
|u^0_3|^2 
< \frac{\theta^{2+2\mu}}{8}\,.
\end{split}
\end{align}
The rest of the proof is by contradiction. Assume that there exist sequences $\{\ep_k\}_{k=1}^{\infty}$ in $(0,1)$ with $\lim_{k\to\infty}\ep_k=0$ and $\{u^{\ep_k}\}_{k=1}^{\infty}$ in $H^1(B^{\ep_k}_{1,+}(0))^3$ with
\begin{align}\label{est5.proof.lem.cpt}
\dashint_{B^{\ep_k}_{1,+}(0)} |u^{\ep_k}|^2 
\le 1
\end{align}
satisfying both
\begin{equation*}
\left\{
\begin{array}{ll}
-\Delta u^{\ep_k}+\nabla p^{\ep_k}=0&\mbox{in}\ B^{\ep_k}_{1,+}(0) \\
\nabla\cdot u^{\ep_k}=0&\mbox{in}\ B^{\ep_k}_{1,+}(0) \\
u^{\ep_k}=0&\mbox{on}\ \Gamma^{\ep_k}_{1}(0)
\end{array}
\right.
\end{equation*}
and
\begin{align}\label{est5'.proof.lem.cpt}
\dashint_{B^{\ep_k}_{\theta,+}(0)}
\big|u^{\ep_k}(x) - \sum_{j=1}^{2} (\overline{\partial_{3} u^{\ep_k}_j})_{B^{\ep_k}_{\theta,+}(0)} 
\big( x_3 {\bf e}_j + {\ep_k} v^{(j)}(\frac{x}{\ep_k})\big) \big|^2 \dd x
> \theta^{2+2\mu}\,.
\end{align}
Since $\ep_k\gamma(x'/\ep_k)\to 0$ uniformly in $x'\in\R^2$, the boundary $\Gamma^{\ep_k}_{1}(0)$ is included in the set $(-1,1)^2\times(-\frac12,0)$ when $k$ is sufficiently large. We extend $u^{\ep_k}$ by zero below the boundary, which is denoted again by $u^{\ep_k}$, and we see that $u^{\ep_k}\in H^1(B_1(0))^3$ for all $k\in\N$. Then, by the Caccioppoli inequality in Lemma \ref{appendix.lem.Caccioppoli.ineq.} with $\rho=\frac12$ and $r=1$ in Appendix \ref{appendix.Caccioppoli}, we have from \eqref{est5.proof.lem.cpt},
\begin{align*}
\dashint_{B^{\ep_k}_{\frac12,+}(0)} |\nabla u^{\ep_k}|^2 
\le C
\end{align*}
with $C$ independent of $\ep_k$. Hence, up to a subsequence of $\{u^{\ep_k}\}_{k=1}^{\infty}$, which is denoted by $\{u^{\ep_k}\}_{k=1}^{\infty}$ again, there exists $u^0\in H^1(B_{\frac12}(0))^3$ such that in the limit $k\to\infty$,
\begin{align*}
u^{\ep_k} \ \to \ u^0 \ \ {\rm in} \ \ L^2(B_{\frac12}(0))^3\,, \qquad
\nabla u^{\ep_k} \ \rightharpoonup \ \nabla u^0 \ \ {\rm in} \ \ L^2(B_{\frac12}(0))^{3\times3}\,,
\end{align*}
and \eqref{est1.proof.lem.cpt} holds by the assumption \eqref{est5.proof.lem.cpt}. Moreover, we have for any $\varphi\in C^\infty_0((-\frac12,\frac12)^2\times(-\frac12,0))^{3}$,  
\begin{align*}
\int_{(-\frac12,\frac12)^2\times(-\frac12,0)} u^0 \cdot \varphi
&= \lim_{k\to\infty} 
\int_{(-\frac12,\frac12)^2\times(-\frac12,0)} u^{\ep_k} \cdot \varphi \\
&=0
\end{align*}
and for any $\varphi\in C^\infty_{0,\sigma}(B_{\frac12}(0))^{3}$,
\begin{align*}
\int_{B_{\frac12,+}(0)} \nabla u^0 \cdot \nabla \varphi
&= \lim_{k\to\infty} 
\int_{B^{\ep_k}_{1,+}(0)} \nabla u^{\ep_k} \cdot \nabla \varphi \\
&=0\,.
\end{align*}
We see that $u^0=0$ on $(-\frac12,\frac12)^2\times(-\frac12,0)$ and hence that $u^0=0$ on $\Gamma_\frac12(0)$ from $u^0\in H^1(B_{\frac12}(0))$. Thus $u^0$ is a weak solution to \eqref{eq1.proof.lem.cpt} satisfying \eqref{est1.proof.lem.cpt}. Then, from $B^{\ep_k}_{\theta,+}(0)=\big(B^{\ep_k}_{\theta,+}(0)\setminus B_{\theta,+}(0)\big) \cup \big(B^{\ep_k}_{\theta,+}(0)\cap B_{\theta,+}(0)\big)$ and $|B^{\ep_k}_{\theta,+}(0)|=|B_{\theta,+}(0)|=4\theta^3$, by the triangle inequality we have
\begin{align*}
& \dashint_{B^{\ep_k}_{\theta,+}(0)}
\big|u^{\ep_k}(x) - \sum_{j=1}^{2} (\overline{\partial_{3} u^{\ep_k}_j})_{B^{\ep_k}_{\theta,+}(0)} 
\big( x_3 {\bf e}_j + {\ep_k} v^{(j)}(\frac{x}{\ep_k})\big) \big|^2 \dd x \\
& \le
\frac{1}{4\theta^3}
\int_{B^{\ep_k}_{\theta,+}(0)\setminus B_{\theta,+}(0)}
\big|u^{\ep_k}(x) - \sum_{j=1}^{2} (\overline{\partial_{3} u^{\ep_k}_j})_{B^{\ep_k}_{\theta,+}(0)} 
\big( x_3 {\bf e}_j + {\ep_k} v^{(j)}(\frac{x}{\ep_k})\big) \big|^2 \dd x \\
& \quad
+ \frac{2}{\theta^3}
\bigg(
\int_{B^{\ep_k}_{\theta,+}(0)\cap B_{\theta,+}(0)}
\big|u^{\ep_k} - u^0 \big|^2 \\
& \qquad\qquad\qquad
+ \sum_{j=1}^{2} \big|(\overline{\partial_{3} u^{\ep_k}_j})_{B^{\ep_k}_{\theta,+}(0)} x_3{\bf e}_j
- (\overline{\partial_3 u^0_j})_{B_{\theta,+}(0)} x_3{\bf e}_j \big|^2 \dd x \\
& \qquad\qquad\qquad
+ \ep_k^2 \sum_{j=1}^{2} 
\big|(\overline{\partial_{3} u^{\ep_k}_j})_{B^{\ep_k}_{\theta,+}(0)}\big|^2
\int_{B^{\ep_k}_{\theta,+}(0)\cap B_{\theta,+}(0)}
\big|v^{(j)}(\frac{x}{\ep_k}) \big|^2 \dd x
\bigg) \\
& \quad 
+ 8\,\dashint_{B_{\theta,+}(0)}
\big|u^0(x) - \sum_{j=1}^{2} (\overline{\partial_{3} u^0_j})_{B_{\theta,+}(0)} x_3 {\bf e}_j \big|^2 \dd x\,.
\end{align*}
Since $u^{\ep_k} \to u^0$ in $L^2(B_{\frac12}(0))^3$ and $\{\nabla u^{\ep_k}\}_{k=1}^{\infty}$ is uniformly bounded in $L^2(B_{\frac12}(0))^{3\times3}$, from the assumption \eqref{est5'.proof.lem.cpt} we see that
\begin{align*}
\theta^{2+2\mu}
& \le 
\varlimsup_{k\to\infty} 
\dashint_{B^{\ep_k}_{\theta,+}(0)}
\big|u^{\ep_k}(x) - \sum_{j=1}^{2} (\overline{\partial_{3} u^{\ep_k}_j})_{B^{\ep_k}_{\theta,+}(0)} 
\big( x_3 {\bf e}_j + {\ep_k} v^{(j)}(\frac{x}{\ep_k})\big) \big|^2 \dd x \\
& \le 
8\,\dashint_{B_{\theta,+}(0)}
\big|u^0(x) - \sum_{j=1}^{2} (\overline{\partial_{3} u^0_j})_{B_{\theta,+}(0)} x_3 {\bf e}_j \big|^2 \dd x\,,
\end{align*}
where \eqref{est2.lem.est.boundarylayer} with $m=0$ in Lemma \ref{lem.est.boundarylayer} is applied to obtain the second line. Hence the choice of $\theta$ in \eqref{est4.proof.lem.cpt} contradicts  \eqref{est5'.proof.lem.cpt}. This completes the proof of Lemma \ref{lem.cpt}.
\end{proof}
%
\noindent The iteration lemma to \eqref{S.ep} is stated as follows. Let $K_0$ be the constant of the Caccioppoli inequality in Lemma \ref{appendix.lem.Caccioppoli.ineq.} in Appendix \ref{appendix.Caccioppoli}.
%
\begin{lemma}\label{lem.itr}
Fix $\mu\in(0,1)$ and let $\theta\in(0,\frac18)$ and $\ep_{\mu}\in(0,1)$ be the constants in Lemma \ref{lem.cpt}. Then for $k\in\N$ and $\ep\in(0, \theta^{k-1}\ep_\mu]$, any weak solution $u^\ep=(u^\ep_1(x), u^\ep_2(x), u^\ep_3(x))^\top \in H^1(B^\ep_{1,+}(0))^3$ to \eqref{S.ep} with
\begin{align}\label{est1.lem.itr}
\dashint_{B^\ep_{1,+}(0)} |u^\ep|^2 
\le
1
\end{align}
satisfies
\begin{align}\label{est2.lem.itr}
\dashint_{B^\ep_{\theta^k,+}(0)}
\big|u^\ep(x) 
- \sum_{j=1}^{2}
a^\ep_{k,j}
\big( x_3 {\bf e}_j + \ep v^{(j)}(\frac{x}{\ep})\big) \big|^2 \dd x
\le
\theta^{(2+2\mu)k}\,.
\end{align}
Here the number $a^\ep_{k,j}\in\R$, $j\in\{1,2\}$, is estimated as
\begin{align}\label{est3.lem.itr}
\sum_{j=1}^{2} |a^\ep_{k,j}|
\le 
2K_0^\frac12 \theta^{-\frac32} (1-\theta)^{-1}
\sum_{l=1}^{k} \theta^{\mu(l-1)}\,.
\end{align}
\end{lemma}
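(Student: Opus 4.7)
The plan is a straightforward induction on $k\in\N$, the compactness Lemma \ref{lem.cpt} serving as both the base case and the one-step engine. For $k=1$, the conclusion \eqref{est2.lem.itr} is literally Lemma \ref{lem.cpt} with $a^\ep_{1,j}:=(\overline{\partial_3 u^\ep_j})_{B^\ep_{\theta,+}(0)}$, and the bound \eqref{est3.lem.itr} for $k=1$ follows at once from Cauchy--Schwarz applied to this average combined with the Caccioppoli inequality from Appendix \ref{appendix.Caccioppoli} applied to $u^\ep$ between the scales $\theta$ and $1$.

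For the inductive step, assume \eqref{est2.lem.itr}--\eqref{est3.lem.itr} at level $k$ and take $\ep\in(0,\theta^k\ep_\mu]\subset(0,\theta^{k-1}\ep_\mu]$, so that the step-$k$ statement applies to $u^\ep$. Setting $\ep':=\ep/\theta^k\in(0,\ep_\mu]$, I would introduce the rescaled error
\[
w^\ep(y):=\theta^{-(1+\mu)k}\,\Big(u^\ep(\theta^k y) - \sum_{j=1}^{2} a^\ep_{k,j}\bigl(\theta^k y_3{\bf e}_j + \ep\,v^{(j)}(\theta^k y/\ep)\bigr)\Big).
\]
Because each of $u^\ep(\theta^k\cdot)$, the linear shear $\theta^k y_3{\bf e}_j$, and the boundary layer $\ep\,v^{(j)}(\theta^k\cdot/\ep)$ solves the Stokes system (on the appropriate domain), and because the combination $\theta^k y_3{\bf e}_j+\ep v^{(j)}(\theta^k y/\ep)$ vanishes on the oscillating boundary $y_3=\ep'\gamma(y'/\ep')$ thanks to the identity $v^{(j)}(y',\gamma(y'))=-\gamma(y'){\bf e}_j$ from Proposition \ref{prop.BL}, the function $w^\ep$ is a weak solution of $({\rm S}^{\ep'})$ on $B^{\ep'}_{1,+}(0)$. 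Moreover the normalization factor $\theta^{-(1+\mu)k}$ is tuned so that the change of variables $x=\theta^k y$ together with the inductive hypothesis \eqref{est2.lem.itr} gives $\dashint_{B^{\ep'}_{1,+}(0)}|w^\ep|^2\le 1$.

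Applying Lemma \ref{lem.cpt} to $w^\ep$ then yields numbers $\tilde{a}^\ep_j:=(\overline{\partial_3 w^\ep_j})_{B^{\ep'}_{\theta,+}(0)}$ for which the one-step improvement holds at the rescaled scale $\theta$. Undoing the scaling $y_3=x_3/\theta^k$ and $\ep'v^{(j)}(y/\ep')=(\ep/\theta^k)\,v^{(j)}(x/\ep)$, the rescaled correction multiplies the Navier building block $x_3{\bf e}_j+\ep v^{(j)}(x/\ep)$ by $\theta^{\mu k}\tilde{a}^\ep_j$, so that setting
\[
a^\ep_{k+1,j}:=a^\ep_{k,j}+\theta^{\mu k}\tilde{a}^\ep_j
\]
produces exactly \eqref{est2.lem.itr} at level $k+1$. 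To close the induction on \eqref{est3.lem.itr}, I would estimate $|\tilde{a}^\ep_j|$ by Cauchy--Schwarz together with the Caccioppoli inequality applied to the Stokes solution $w^\ep$ (between scales $\theta$ and $1$), which gives $|\tilde{a}^\ep_j|\le K_0^{1/2}\theta^{-3/2}(1-\theta)^{-1}$ with $K_0$ the constant of Appendix \ref{appendix.Caccioppoli}; adding this bound, multiplied by $\theta^{\mu k}$, to the step-$k$ bound on $|a^\ep_{k,j}|$ produces the geometric-type partial sum $\sum_{l=1}^{k+1}\theta^{\mu(l-1)}$ required in \eqref{est3.lem.itr}.

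The only place where care is needed is the verification that the rescaled $w^\ep$ really satisfies $({\rm S}^{\ep'})$ in the weak sense; the interior equation is routine but the homogeneous Dirichlet condition on $y_3=\ep'\gamma(y'/\ep')$ depends entirely on the precise boundary trace of $v^{(j)}$ provided by Proposition \ref{prop.BL}, which is exactly the reason for introducing the Navier building block in this form. Once this compatibility is in hand the induction is mechanical, and the range $\ep\in(0,\theta^{k-1}\ep_\mu]$ is dictated by the requirement $\ep'\le\ep_\mu$ at each step of the rescaling.
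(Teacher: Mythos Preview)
Your proposal is correct and follows essentially the same approach as the paper: induction on $k$, with the base case given directly by Lemma \ref{lem.cpt} and the inductive step carried out by rescaling the step-$k$ error by $\theta^{-(1+\mu)k}$ to produce a new Stokes solution on $B^{\ep/\theta^k}_{1,+}(0)$, reapplying Lemma \ref{lem.cpt}, and updating $a^\ep_{k+1,j}=a^\ep_{k,j}+\theta^{\mu k}\tilde a^\ep_j$ with the Caccioppoli bound on $\tilde a^\ep_j$. The paper also records the associated rescaled pressure $P^{\ep/\theta^k}(y)=\theta^{-\mu k}\big(p^\ep(\theta^k y)-\sum_j a^\ep_{k,j}q^{(j)}(\theta^k y/\ep)\big)$, but this is only bookkeeping since the weak formulation does not require it.
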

%
\begin{proof}
The proof is done by induction on $k\in\N$. The case $k=1$ is valid since it is exactly \eqref{est2.lem.cpt} in Lemma \ref{lem.cpt} putting $a^\ep_{1,j}=(\overline{\partial_{3} u^\ep_j})_{B^\ep_{\theta,+}(0)}$, $j\in\{1,2\}$. Indeed, by the H\"older inequality we have
\begin{align*}
\sum_{j=1}^{2} |a^\ep_{1,j}|
& \le 
2|B^\ep_{\theta,+}(0)|^{-\frac12}
\|\nabla u^\ep\|_{L^2(B^\ep_{\theta,+}(0))} \\
& \le 
K_0^\frac12 \theta^{-\frac32} (1-\theta)^{-1}
\|u^\ep\|_{L^2(B^\ep_{1,+}(0))}\,,
\end{align*}
where we have applied the Caccioppoli inequality to \eqref{S.ep} with $\rho=\theta$ and $r=1$ in Lemma \ref{appendix.lem.Caccioppoli.ineq.} in Appendix \ref{appendix.Caccioppoli}. Thus by \eqref{est1.lem.itr} we have \eqref{est3.lem.itr} for $k=1$. Next let us assume that \eqref{est2.lem.itr} and \eqref{est3.lem.itr} hold at rank $k\in\N$ and let $\ep\in(0, \theta^{k}\ep_\mu]$. Then we define new functions $U^{\ep/\theta^k}=(U^{\ep/\theta^k}_1(y), U^{\ep/\theta^k}_2(y), U^{\ep/\theta^k}_3(y))^\top$ and $P^{\ep/\theta^k}=P^{\ep/\theta^k}(y)$ on $B^{\ep/\theta^k}_{1,+}(0)$ by
\begin{align*}
U^{\ep/\theta^k}(y)
& = \frac{1}{\theta^{(1+\mu)k}}
\Big( u^\ep(\theta^k y) 
- \sum_{j=1}^{2}
\theta^k a^\ep_{k,j} \big(y_3 {\bf e}_j 
+ \frac{\ep}{\theta^k} v^{(j)}(\frac{\theta^k y}{\ep})\big) \Big)\,, \\
P^{\ep/\theta^k}(y) 
& = \frac{1}{\theta^{\mu k}} \Big( p^\ep(\theta^k y) 
- \sum_{j=1}^{2} a^\ep_{k,j} q^{(j)}(\frac{\theta^k y}{\ep}) \Big)\,.
\end{align*}
We see that $(U^{\ep/\theta^k}, P^{\ep/\theta^k})$ is a weak solution to
\begin{equation}\label{eq1.proof.lem.itr}
\left\{
\begin{array}{ll}
-\Delta_y U^{\ep/\theta^k} +\nabla_y P^{\ep/\theta^k}=0&\mbox{in}\ B^{\ep/\theta^k}_{1,+}(0) \\
\nabla_y\cdot U^{\ep/\theta^k}=0&\mbox{in}\ B^{\ep/\theta^k}_{1,+}(0) \\
U^{\ep/\theta^k}=0&\mbox{on}\ \Gamma^{\ep/\theta^k}_{1}(0)\,.
\end{array}
\right.
\end{equation}
From the recurrence hypothesis \eqref{est2.lem.itr} at rank $k$, we have
\begin{align}\label{est1.proof.lem.itr}
\dashint_{B^{\ep/\theta^k}_{1,+}(0)} |U^{\ep/\theta^k}|^2 \le 1
\end{align}
by a change of variables. Now, since $\ep/\theta^k \in(0, \ep_\mu]$, we can apply Lemma \ref{lem.cpt} to see that
\begin{align*}
\dashint_{B^{\ep/\theta^k}_{\theta,+}(0)}
\big|U^{\ep/\theta^k}(y) 
- \sum_{j=1}^{2}
(\overline{\partial_{y_3} U^{\ep/\theta^k}_j})_{B^{\ep/\theta^k}_{\theta,+}(0)} 
\big( y_3 {\bf e}_j + \frac{\ep}{\theta^k} v^{(j)}(\frac{\theta^k y}{\ep})\big) \big|^2 \dd y
\le
\theta^{2+2\mu}\,.
\end{align*}
A change of variables leads to
\begin{align}\label{goal.proof.lem.itr}
\dashint_{B^\ep_{\theta^{k+1},+}(0)}
\big|u^\ep(x) 
- \sum_{j=1}^{2}
a^\ep_{k+1,j} \big( x_3 {\bf e}_j + \ep v^{(j)}(\frac{x}{\ep})\big) \big|^2 \dd x
\le
\theta^{(2+2\mu)(k+1)}\,,
\end{align}
where the number $a^\ep_{k+1,j}\in\R$, $j\in\{1,2\}$, is defined as
\begin{align}\label{def1.proof.lem.itr}
a^\ep_{k+1,j} = a^\ep_{k,j} 
+  \theta^{\mu k} (\overline{\partial_{y_3} U^{\ep/\theta^k}_j})_{B^{\ep/\theta^k}_{\theta,+}(0)}\,.
\end{align}
The Caccioppoli inequality to \eqref{eq1.proof.lem.itr} with $\rho=\theta$ and $r=1$ combined with \eqref{est1.proof.lem.itr} 
leads to
\begin{align*}
\begin{split}
\|\nabla_y U^{\ep/\theta^k}\|_{L^2(B^{\ep/\theta^k}_{\theta,+}(0))}
& \le 
K_0^\frac12 (1-\theta)^{-1} \|U^{\ep/\theta^k}\|_{L^2(B^{\ep/\theta^k}_{1,+}(0))} \\
& \le 
2K_0^\frac12 (1-\theta)^{-1}\,.
\end{split}
\end{align*}
Therefore, from the assumption \eqref{est3.lem.itr} for $k$ and \eqref{def1.proof.lem.itr}, by the H\"older inequality we obtain
\begin{align*}
\sum_{j=1}^{2} |a^\ep_{k+1,j}| 
& \le
\sum_{j=1}^{2} |a^\ep_{k,j}| 
+\theta^{\mu k} \sum_{j=1}^{2} 
\big|(\overline{\partial_{y_3} U^{\ep/\theta^k}_j})_{B^{\ep/\theta^k}_{\theta,+}(0)}\big| \\
& \le 
2K_0^\frac12 \theta^{-\frac32} (1-\theta)^{-1}
\sum_{l=1}^{k+1} \theta^{\mu(l-1)}\,,
\end{align*}
which with \eqref{goal.proof.lem.itr} proves the assertions \eqref{est2.lem.itr} and \eqref{est3.lem.itr} for $k+1$. This completes the proof.
\end{proof}
%
\subsection{Proof of Theorem \ref{theo.lip}}\label{subsec.linear.proof}
%
We prove Theorem \ref{theo.lip} by applying Lemma \ref{lem.itr}. Fix $\mu\in(0,1)$ and let $\theta\in(0,\frac18)$ and $\ep_{\mu}\in(0,1)$ be the constants in Lemma \ref{lem.cpt}.
%
\begin{proofx}{Theorem \ref{theo.lip}} Since the equations \eqref{S.ep} are linear, it suffices to prove the estimate
\begin{align}\label{est1.proof.theo.lip}
\bigg(\dashint_{B^\ep_{r,+}(0)} |u^\ep|^2 \bigg)^\frac12 
& \le
C r\,.
\end{align}
Set $\ep^{(3)}=\ep_{\mu}$ and let $\ep\in(0,\ep^{(3)}]$. Firstly we note that if $r\in(\theta, 1]$, then
\begin{align*}
\bigg(\dashint_{B^\ep_{r,+}(0)} |u^\ep|^2 \bigg)^\frac12 
\le
\theta^{-\frac52} r
\end{align*}
holds. Thus we focus on the case $r\in[\ep/\ep^{(3)}, \theta]$. For any given $r\in[\ep/\ep^{(3)},\theta]$, there exists $k\in\N$ 
 with $k\ge2$ 
such that $r\in(\theta^k, \theta^{k-1}]$ holds. From $\ep\in(0,\theta^{k-1}\ep^{(3)}]$ we apply Lemma \ref{lem.itr} to see that
\begin{align}\label{est2.proof.theo.lip}
\begin{split}
& \bigg(\dashint_{B^\ep_{r,+}(0)} |u^\ep|^2 \bigg)^\frac12 
\le
\bigg(\theta^{-3} \dashint_{B^\ep_{\theta^{k-1},+}(0)} |u^\ep|^2 \bigg)^\frac12 \\
& \le
\theta^{-\frac32}
\bigg(
\dashint_{B^\ep_{\theta^{k-1},+}(0)}
\big|u^\ep(x) 
- \sum_{j=1}^{2}
a^\ep_{k-1,j}
\big( x_3 {\bf e}_j + \ep v^{(j)}(\frac{x}{\ep})\big) \big|^2 \dd x
\bigg)^\frac12 \\
& \quad
+ \theta^{-\frac32}
\Big(\sum_{j=1}^{2} |a^\ep_{k-1,j}|\Big)
\bigg(\sum_{j=1}^{2} \dashint_{B^\ep_{\theta^{k-1},+}(0)}
\big| x_3 {\bf e}_j + \ep v^{(j)}(\frac{x}{\ep})\big) \big|^2 \dd x \bigg)^\frac12 \\
& \le
\theta^{(1+\mu)(k-1)-\frac32} \\
& \quad
+ C \theta^{-3} 
(1-\theta)^{-1} (1-\theta^\mu)^{-1}
\bigg(\sum_{j=1}^{2} \dashint_{B^\ep_{\theta^{k-1},+}(0)}
\big| x_3 {\bf e}_j + \ep v^{(j)}(\frac{x}{\ep})\big) \big|^2 \dd x \bigg)^\frac12\,,
\end{split}
\end{align}
where $C$ depends only on $\|\gamma\|_{W^{1,\infty}(\R^2)}$. From \eqref{est2.lem.est.boundarylayer} with $m=0$ in Lemma \ref{lem.est.boundarylayer} one has
\begin{align*}
\bigg(\sum_{j=1}^{2} \dashint_{B^\ep_{\theta^{k-1},+}(0)}
\big| x_3 {\bf e}_j + \ep v^{(j)}(\frac{x}{\ep})\big) \big|^2 \dd x \bigg)^\frac12
&\le 
C (\theta^{k-1} + \ep^\frac12 \theta^{\frac{k-1}{2}})\,.
\end{align*}
Therefore, by $\theta^{k-1}\in(0,\theta^{-1} r)$ and $\ep\in(0,\theta^{k-1} \ep^{(3)}]$, we have from \eqref{est2.proof.theo.lip}, 
\begin{align*}
\begin{split}
\bigg(\dashint_{B^\ep_{r,+}(0)} |u^\ep|^2 \bigg)^\frac12 
& \le
\theta^{-\frac52-\mu} r^{1+\mu}
+ C \theta^{-3} 
(1-\theta)^{-1} (1-\theta^\mu)^{-1}
(\theta^{k-1} + \ep^\frac12 \theta^{\frac{k-1}{2}}) \\
& \le
\Big( \theta^{-\frac52-\mu} r^{\mu} + C\theta^{-4} 
(1-\theta)^{-1} (1-\theta^\mu)^{-1}
(1 + (\ep^{(3)})^\frac12) \Big) r\,.
\end{split}
\end{align*}
Hence we obtain the desired estimate \eqref{est1.proof.theo.lip} by letting $\mu=\frac12$ for instance.
This completes the proof of Theorem \ref{theo.lip}.
\end{proofx}
%
\section{Proof of the main results}\label{sec.nonlinear}
%
\noindent We prove Theorem \ref{theo.lip.nonlinear} and Theorem \ref{theo.lip.nonlinear.periodic} in this section. As is done in Section \ref{sec.linear}, we first work out the compactness and iteration lemmas in Subsection \ref{subsec.nonlinear.cpt.itr}. Contrary to the linear case, we need to carry out a careful analysis of the iteration argument due to the nonlinearity. Indeed, since we do not assume any smallness condition on solutions of \eqref{intro.NS.ep}, a naive iterated application of the Caccioppoli inequality leads to a blow-up of the derivative estimate in the nonlinear case. We overcome this difficulty a priori by taking the free parameter $\theta$ appearing in the compactness lemma sufficiently small depending on the bound $M$ of the solution to \eqref{intro.NS.ep}. Eventually, the proof of Theorem \ref{theo.lip.nonlinear} and Theorem \ref{theo.lip.nonlinear.periodic} is given in Subsection \ref{subsec.nonlinear.proof}.
%
\subsection{Nonlinear compactness and iteration lemmas}\label{subsec.nonlinear.cpt.itr}
%
We give the proof of the compactness and iteration lemmas to the nonlinear equations. We consider the modified Navier-Stokes equations:
\begin{equation}\tag{MNS$^\ep$}\label{modified.NS.ep}
\left\{
\begin{aligned}
&-\Delta U^{\ep} +\nabla P^{\ep}
= -\nabla \cdot 
(U^{\ep} \otimes b^{\ep} + b^\ep \otimes U^{\ep}) \\
& \qquad\qquad\qquad\qquad
-\lambda^\ep U^{\ep} \cdot\nabla U^{\ep}
+ \nabla \cdot F^\ep \ \ \mbox{in}\ B^{\ep}_{1,+}(0) \\
&\nabla\cdot U^{\ep}=0 \ \ \mbox{in}\ B^{\ep}_{1,+}(0) \\
&U^{\ep}=0 \ \ \mbox{on}\ \Gamma^{\ep}_1(0)\,,
\end{aligned}\right.
\end{equation}
where $b^\ep=b^\ep(x)$ is defined as
\begin{align}
b^\ep(x) = \sum_{j=1}^{2} C^\ep_j 
\big(x_3 {\bf e}_j + \ep v^{(j)}(\frac{x}{\ep})\big)\,, 
\quad x\in B^\ep_{1,+}(0)\,.
\end{align}
Note that $\nabla\cdot b^\ep=0$ in $B^{\ep}_{1,+}(0)$ and $b^\ep=0$ on $\Gamma^{\ep}_1(0)$. The compactness lemma is stated as follows.
%
\begin{lemma}\label{lem.nonlinear.cpt}
For $M\in(0,\infty)$ and $\mu\in(0,1)$, there exists a constant $\theta_{0}\in(0,\frac18)$ depending on $M$ and $\mu$ such that the following statement holds. For any $\theta\in(0,\theta_{0}]$, there exists $\ep_\mu\in(0,1)$ depending on $\|\gamma\|_{W^{1,\infty}(\R^2)}$, $M$, $\mu$, and $\theta$ such that for $\ep\in(0, \ep_\mu]$, $(\lambda^\ep, C^\ep_1, C^\ep_2)\in[-1,1]^3$, and $F^{\ep}\in L^2(B^\ep_{1,+}(0))^{3\times3}$ with
\begin{align}\label{est1.lem.nonlinear.cpt}
\|F^{\ep}\|_{L^2(B^\ep_{1,+}(0))}
\le M \ep_{\mu}\,,
\end{align}
any weak solution $U^\ep=(U^\ep_1(x), U^\ep_2(x), U^\ep_3(x))^\top \in H^1(B^\ep_{1,+}(0))^3$ to \eqref{modified.NS.ep} with
\begin{align}\label{est2.lem.nonlinear.cpt}
\dashint_{B^\ep_{1,+}(0)} |U^\ep|^2 
\le M^2
\end{align}
satisfies
\begin{align}\label{est3.lem.nonlinear.cpt}
\dashint_{B^\ep_{\theta,+}(0)}
\big|U^\ep(x)
- \sum_{j=1}^{2}
(\overline{\partial_{3} U^\ep_j})_{B^\ep_{\theta,+}(0)} 
\big( x_3 {\bf e}_j + \ep v^{(j)}(\frac{x}{\ep})\big) \big|^2 \dd x
\le
M^2 \theta^{2+2\mu}\,.
\end{align}
\end{lemma}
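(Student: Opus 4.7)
The plan is to follow the compactness contradiction scheme of Lemma \ref{lem.cpt}, the new features being the convection by $b^\ep$, the quadratic term $\lambda^\ep U^\ep\cdot\nabla U^\ep$, and the source $\nabla\cdot F^\ep$. As in the linear case I first choose $\theta_{0}$ from the $\ep\downarrow 0$ limit profile, and then, for a given $\theta\in(0,\theta_{0}]$, choose $\ep_{\mu}$ small enough to close a contradiction. The factor $M^{2}$ appearing on both sides of \eqref{est2.lem.nonlinear.cpt} and \eqref{est3.lem.nonlinear.cpt} amounts to normalising $U^{\ep}$ by $M$, at the cost of putting a coefficient $M$ in front of the nonlinear term.

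Assume the conclusion fails: there exist $\ep_{k}\downarrow 0$, $(\lambda^{\ep_{k}},C_{1}^{\ep_{k}},C_{2}^{\ep_{k}})\in[-1,1]^{3}$, $F^{\ep_{k}}$ with $\|F^{\ep_{k}}\|_{L^{2}}\le M\ep_{\mu}$, and weak solutions $U^{\ep_{k}}$ to \eqref{modified.NS.ep} satisfying \eqref{est2.lem.nonlinear.cpt} but violating \eqref{est3.lem.nonlinear.cpt}. The Caccioppoli estimate of Appendix \ref{appendix.Caccioppoli} (applied to \eqref{modified.NS.ep} and absorbing the $L^{2}$ control of $b^{\ep_{k}}$ from Lemma \ref{lem.est.boundarylayer}) yields a uniform $H^{1}(B^{\ep_{k}}_{1/2,+}(0))$ bound on $U^{\ep_{k}}$ depending only on $M$ and $\|\gamma\|_{W^{1,\infty}(\R^{2})}$. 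After zero-extending $U^{\ep_{k}}$ below the boundary, extraction gives $U^{\ep_{k}}\to U^{0}$ strongly in $L^{p}(B_{1/2,+}(0))$ for every $p<6$ by Rellich--Kondrachov, $\nabla U^{\ep_{k}}\rightharpoonup\nabla U^{0}$ in $L^{2}$, and $(\lambda^{\ep_{k}},C_{j}^{\ep_{k}})\to(\lambda,C_{j})$. Lemma \ref{lem.est.boundarylayer} gives $\ep_{k}v^{(j)}(\cdot/\ep_{k})\to 0$ in $L^{2}$, hence $b^{\ep_{k}}\to b^{0}:=\sum_{j}C_{j}x_{3}\mathbf{e}_{j}$ strongly in $L^{2}$. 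Passing to the limit in the divergence-free weak formulation identifies $U^{0}\in H^{1}(B_{1/2,+}(0))^{3}$ as a weak solution of the stationary Navier--Stokes system with convection by the smooth field $b^{0}$, homogeneous no-slip on $\Gamma_{1/2}(0)$, zero forcing, and $\|U^{0}\|_{L^{2}}\le M$. Bootstrapping via Appendix \ref{appendix.Regularity} — using the pressure estimate \eqref{e.estpressure}, smoothness of $b^{0}$, and $H^{1}\hookrightarrow L^{6}$ — yields $U^{0}\in C^{2}(\overline{B_{3/8,+}(0)})^{3}$ with a bound $K(M)$ depending on $M$ and $\|\gamma\|_{W^{1,\infty}(\R^{2})}$.

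Carrying out the tangential Taylor expansion exactly as in Lemma \ref{lem.cpt}, together with $U^{0}_{3}=O(x_{3}^{2})$ following from divergence-freeness and no-slip on the flat boundary, gives
\begin{equation*}
\dashint_{B_{\theta,+}(0)}\Bigl|U^{0}(x)-\sum_{j=1}^{2}(\overline{\partial_{3}U^{0}_{j}})_{B_{\theta,+}(0)}\,x_{3}\mathbf{e}_{j}\Bigr|^{2}\dd x\le C\,K(M)^{2}\,\theta^{4}.
\end{equation*}
Choosing $\theta_{0}=\theta_{0}(M,\mu)$ so that $CK(M)^{2}\theta_{0}^{2-2\mu}\le M^{2}/16$ forces this average to be at most $M^{2}\theta^{2+2\mu}/8$ for every $\theta\in(0,\theta_{0}]$. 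For a fixed $\theta\in(0,\theta_{0}]$, applying the triangle inequality as at the end of the proof of Lemma \ref{lem.cpt} — using the strong $L^{2}$ convergence of $U^{\ep_{k}}$, the convergence $(\overline{\partial_{3}U^{\ep_{k}}_{j}})\to(\overline{\partial_{3}U^{0}_{j}})$, and the smallness of $\ep_{k}v^{(j)}(\cdot/\ep_{k})$ in $L^{2}$ via \eqref{est2.lem.est.boundarylayer} — produces the desired contradiction once $\ep_{\mu}$ is taken small enough depending on $\theta$.

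The main obstacle is quantifying the $M$-dependence of the regularity constant $K(M)$ for the limit Navier--Stokes problem: in 3D, regularity starting from an $H^{1}$ bound alone is not automatic, and the non-perturbative size $M$ enters precisely through this constant. Because $K(M)$ grows with $M$, $\theta_{0}$ must shrink as $M$ grows, which is the non-uniformity foreseen in the introduction and which will force the Newton-shooting-type choice of parameters in the iteration step that follows.
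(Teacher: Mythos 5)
Your overall scheme is the paper's: normalise by $M$ (equivalently keep the $M^2$ on both sides), run a compactness contradiction, identify the limit as the modified Navier--Stokes system with drift $b^0=\sum_j C_j x_3{\bf e}_j$ and no-slip on the flat boundary, invoke the Appendix \ref{appendix.Regularity} regularity (whose $C^2$ constant depends on $M$ but is uniform over $(\lambda,C_1,C_2)\in[-1,1]^3$), and fix $\theta_0(M,\mu)$ from the Taylor expansion before running the contradiction for each $\theta\le\theta_0$; the Caccioppoli bound, the $L^1$/$L^p$ convergence of the quadratic terms, and the triangle-inequality endgame are all as in Lemma \ref{lem.cpt}. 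Your worry that $H^1$ regularity ``is not automatic'' in 3D is unfounded for the stationary system (this is exactly what Appendix \ref{appendix.Regularity} provides), but you track the $M$-dependence of $K(M)$ and hence of $\theta_0$ correctly, which is the point that matters for the later iteration.

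There is, however, a genuine flaw in how you set up the contradiction with respect to $F^\ep$ and $\ep_\mu$. You posit counterexamples with $\|F^{\ep_k}\|_{L^2}\le M\ep_\mu$ for a fixed $\ep_\mu$ and then assert that the limit problem has \emph{zero forcing}; this does not follow, since along such a sequence $F^{\ep_k}$ need only converge weakly to some $F^0$ with $\|F^0\|\le M\ep_\mu$, and then the limit equation carries $\nabla\cdot F^0$, Appendix \ref{appendix.Regularity} no longer applies as stated, and the constant in the Taylor step is contaminated. Your closing remark that the contradiction is obtained ``once $\ep_\mu$ is taken small enough depending on $\theta$'' misplaces the quantifier: $\ep_\mu$ is the object whose existence the lemma asserts, so it cannot be chosen at the end of a compactness argument. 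The correct negation is: for every trial value (say $1/k$) there exist $\ep_k\le 1/k$, data with $\|F^{\ep_k}\|_{L^2}\le M/k$, and a solution violating the conclusion; hence the forcing bound shrinks along the sequence, $F^{\ep_k}\to0$ strongly in $L^2$, the limit is force-free, and the contradiction with the choice of $\theta_0$ closes the proof with no further choice of $\ep_\mu$. This is exactly how the paper phrases it (after rescaling, $\|G^{\ep_k}\|_{L^2(B^{\ep_k}_{1,+}(0))}\le\ep_k$). The fix is purely in the logical bookkeeping, but as written the step ``zero forcing in the limit'' is unjustified. A further cosmetic point: the uniform $H^1$ bound via Caccioppoli uses the control of $\nabla b^{\ep_k}$ in $L^2$ from \eqref{est1.lem.est.boundarylayer}, not merely the $L^2$ smallness of $b^{\ep_k}$.
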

%
\begin{proof}
By setting
\begin{align*}
V^\ep = \frac{U^\ep}{M}\,, \qquad
Q^\ep = \frac{P^\ep}{M}\,, \qquad
G^\ep = \frac{F^\ep}{M}\,,
\end{align*}
we see that $V^\ep$ and $G^\ep$ satisfy 
\begin{align*}
\dashint_{B^\ep_{1,+}(0)} |V^\ep|^2 \le 1\,, \qquad
\|G^\ep\|_{L^2(B^\ep_{1,+}(0))} \le \ep_\mu\,,
\end{align*}
and that $(V^\ep, Q^\ep)$ solves the equations
\begin{equation}\label{eq0.proof.lem.nonlinear.cpt}
\left\{
\begin{aligned}
&-\Delta V^\ep +\nabla Q^\ep
= -\nabla \cdot 
(V^\ep \otimes b^\ep + b^\ep \otimes V^\ep) \\
& \qquad\qquad\qquad\qquad
- M \lambda^\ep V^\ep \cdot\nabla V^\ep
+ \nabla \cdot G^\ep \ \ \mbox{in}\ B^{\ep}_{1,+}(0) \\
&\nabla\cdot V^\ep=0 \ \ \mbox{in}\ B^{\ep}_{1,+}(0) \\
&V^\ep=0 \ \ \mbox{on}\ \Gamma^{\ep}_1(0)\,.
\end{aligned}\right.
\end{equation}
In the following we consider the rescaled equations \eqref{eq0.proof.lem.nonlinear.cpt}.   Hence our goal is to obtain
\begin{align}\label{goal.proof.lem.nonlinear.cpt}
\dashint_{B^\ep_{\theta,+}(0)}
\big|V^\ep(x)
- \sum_{j=1}^{2}
(\overline{\partial_{3} V^\ep_j})_{B^\ep_{\theta,+}(0)} 
\big( x_3 {\bf e}_j + \ep v^{(j)}(\frac{x}{\ep})\big) \big|^2 \dd x
\le
\theta^{2+2\mu}\,.
\end{align}
For given $M\in (0,\infty)$ and $\mu\in(0,1)$, we choose $\theta_0\in(0,\frac18)$ in the statement as follows. Let $(V^0, Q^0) \in H^1(B_{\frac12,+}(0))^3\times L^2(B_{\frac12,+}(0))$ be a weak solution to the $\ep$-zero limit equations
\begin{equation}\label{eq1.proof.lem.nonlinear.cpt}
\left\{
\begin{aligned}
&-\Delta V^0 +\nabla Q^0
= -\nabla \cdot 
\Big(V^0 \otimes \big(\sum_{j=1}^{2} C^0_j x_3 {\bf e}_j \big)
+ \big(\sum_{j=1}^{2} C^0_j x_3 {\bf e}_j \big) \otimes V^0\Big) \\
& \qquad\qquad\qquad\qquad
- M \lambda^0 V^0 \cdot\nabla V^0 \ \ \mbox{in}\ B_{\frac12,+}(0) \\
&\nabla\cdot V^0=0 \ \ \mbox{in}\ B_{\frac12,+}(0) \\
&V^0=0 \ \ \mbox{on}\ \Gamma_{\frac12}(0)
\end{aligned}\right.
\end{equation}
with
\begin{align}\label{est1.proof.lem.nonlinear.cpt}
\int_{B_{\frac12,+}(0)}
|V^0|^2 \le 4\,.
\end{align}
By the regularity theory to \eqref{eq1.proof.lem.nonlinear.cpt} in Appendix \ref{appendix.Regularity} using \eqref{est1.proof.lem.nonlinear.cpt}, we see that $V^0\in C^2(\overline{B_{\frac38,+}(0)})^3$ and 
\begin{align*}
\|V^0\|_{C^2(\overline{B_{\frac38,+}(0)})} \le K
\end{align*}
with a constant $K$ depending on $M$ but independent of $(\lambda^0,C^0_1, C^0_2)\in[-1,1]^3$. Then, in the same way as in the proof of Lemma \ref{lem.cpt}, we choose $\theta_0\in(0,\frac18)$ sufficiently small so that for any $\theta\in(0,\theta_0]$
\begin{align}\label{est2.proof.lem.nonlinear.cpt}
\dashint_{B_{\theta,+}(0)}
\big|V^0(x)
- \sum_{j=1}^{2}
(\overline{\partial_{3} V^0_j})_{B_{\theta,+}(0)} x_3 {\bf e}_j \big|^2 \dd x
< \frac{\theta^{2+2\mu}}{8}
\end{align}
holds. We emphasize that $\theta_0$ depends only on $M$ and $\mu$. The rest of the proof is done by contradiction. Assume that there exist $\theta\in(0,\theta_0]$ and sequences $\{\ep_k\}_{k=1}^{\infty}\subset(0,1)$ with $\lim_{k\to\infty}\ep_k=0$,
$\{(\lambda^{\ep_k},C^{\ep_k}_1, C^{\ep_k}_2)\}_{k=1}^{\infty}\subset[-1,1]^3$, 
and $\{G^{\ep_k}\}_{k=1}^{\infty}\subset L^2(B^{\ep_k}_{1,+}(0))^{3\times3}$ with
\begin{align*}
\|G^{\ep_k}\|_{L^2(B^{\ep_k}_{1,+}(0))} \le \ep_{k}\,.
\end{align*}
Moreover, we assume that there exists $\{V^{\ep_k}\}_{k=1}^{\infty}$ in $H^1(B^{\ep_k}_{1,+}(0))^3$ with
\begin{align}\label{est3.proof.lem.nonlinear.cpt}
\dashint_{B^{\ep_k}_{1,+}(0)} |V^{\ep_k}|^2 
\le 1
\end{align}
satisfying both
\begin{equation*}
\left\{
\begin{aligned}
&-\Delta V^{\ep_k} +\nabla Q^{\ep_k}
= -\nabla \cdot 
(V^{\ep_k} \otimes b^{\ep_k} + b^{\ep_k} \otimes V^{\ep_k}) \\
& \qquad\qquad\qquad\qquad\quad 
-M \lambda^{\ep_k} V^{\ep_k} \cdot\nabla V^{\ep_k}
+ \nabla \cdot G^{\ep_k} \ \ \mbox{in}\ B^{\ep_k}_{1,+}(0) \\
&\nabla\cdot V^{\ep_k}=0 \ \ \mbox{in}\ B^{\ep_k}_{1,+}(0) \\
&V^{\ep_k}=0 \ \ \mbox{on}\ \Gamma^{\ep_k}_1(0)
\end{aligned}\right.
\end{equation*}
and
\begin{align}\label{est4.proof.lem.nonlinear.cpt}
\dashint_{B^{\ep_k}_{\theta,+}(0)}
\big|V^{\ep_k}(x) - \sum_{j=1}^{2} (\overline{\partial_{3} V^{\ep_k}_j})_{B^{\ep_k}_{\theta,+}(0)} 
\big( x_3 {\bf e}_j + {\ep_k} v^{(j)}(\frac{x}{\ep_k})\big) \big|^2 \dd x
> \theta^{2+2\mu}\,.
\end{align}
We extend $V^{\ep_k}$, $v^{(j)}(\cdot/\ep_k)$, and $G^{\ep_k}$ by zero below the boundary, which are respectively denoted by $V^{\ep_k}$, $v^{(j)}(\cdot/\ep_k)$, and $G^{\ep_k}$ again, and see that $V^{\ep_k}\in H^1(B_1(0))^3$ and $G^{\ep_k}\in L^2(B_1(0))^{3\times3}$ for all $k\in\N$. By applying the Caccioppoli inequality in Lemma \ref{appendix.lem.Caccioppoli.ineq.} with $\rho=\frac12$ and $r=1$ in Appendix \ref{appendix.Caccioppoli}, we obtain
\begin{align*}
\|\nabla V^{\ep_k}\|_{L^2(B^{\ep_k}_{\frac12,+}(0))} \le C(1+M^3)
\end{align*}
uniformly in $k$ with a constant $C$ independent of $M$. Here we have used \eqref{est1.lem.est.boundarylayer} in Lemma \ref{lem.est.boundarylayer} and \eqref{est3.proof.lem.nonlinear.cpt}. Hence, up to subsequences of $\{V^{\ep_k}\}_{k=1}^{\infty}$, $\{(\lambda^{\ep_k}, C^{\ep_k}_1, C^{\ep_k}_2)\}_{k=1}^{\infty}$, and $\{G^{\ep_k}\}_{k=1}^{\infty}$, which are respectively denoted by $\{V^{\ep_k}\}_{k=1}^{\infty}$, $\{(\lambda^{\ep_k}, C^{\ep_k}_1, C^{\ep_k}_2)\}_{k=1}^{\infty}$, and $\{G^{\ep_k}\}_{k=1}^{\infty}$ again, 
there exist $V^0\in H^1(B_{\frac12}(0))^3$ and $(\lambda^0, C^0_1, C^0_2)\in[-1,1]^3$ such that in the limit $k\to\infty$,
\begin{align*}
& V^{\ep_k} \ \to \ V^0 \ \ {\rm in} \ \ L^2(B_{\frac12}(0))^3\,, \qquad
\nabla V^{\ep_k} \ \rightharpoonup \ \nabla V^0 \ \ {\rm in} \ \ L^2(B_{\frac12}(0))^{3\times3}\,, \\
&(\lambda^{\ep_k}, C^{\ep_k}_1, C^{\ep_k}_2) \to (\lambda^0, C^0_1, C^0_2) \ \ {\rm in} \ \ [-1,1]^3\,, \qquad
G^{\ep_k} \ \to \ 0 \ \ {\rm in} \ \ L^2(B_{\frac12}(0))^{3\times3}\,.
\end{align*}
On the other hand, the assumption \eqref{est3.proof.lem.nonlinear.cpt} implies \eqref{est1.proof.lem.nonlinear.cpt}. Hence, from \eqref{est2.lem.est.boundarylayer} with $m=0$ in Lemma \ref{lem.est.boundarylayer}, by a similar reasoning as in the proof of Lemma \ref{lem.cpt} combined with the convergences
\begin{align*}
&V^{\ep_k}\otimes V^{\ep_k} \ \to \ V^0\otimes V^0 
\ \ {\rm in} \ \ L^1(B_{\frac12}(0))^3\,, \\
&V^{\ep_k} \otimes \big(\sum_{j=1}^{2} C^{\ep_k}_j \ep_k v^{(j)}(\frac{\cdot}{\ep_k})\big)
+ \big(\sum_{j=1}^{2} C^{\ep_k}_j \ep_k v^{(j)}(\frac{\cdot}{\ep_k})\big) \otimes V^{\ep_k} \\
&\qquad\quad\quad \ \ 
\ \to \ 0 \ \ {\rm in} \ \ L^1(B_{\frac12}(0))^{3\times3}\,,
\end{align*}
we see that the limit $V^0$ gives a weak solution to \eqref{eq1.proof.lem.nonlinear.cpt} satisfying \eqref{est1.proof.lem.nonlinear.cpt}. Then, in the same way as in the proof of Lemma \ref{lem.cpt}, we reach a contradiction to \eqref{est4.proof.lem.nonlinear.cpt} from the choice of $\theta\in(0,\theta_0]$ in \eqref{est2.proof.lem.nonlinear.cpt}. Hence we obtain the desired estimate \eqref{goal.proof.lem.nonlinear.cpt} yielding \eqref{est3.lem.nonlinear.cpt}. This completes the proof.
\end{proof}
%
\noindent Next we prove the iteration lemma to the Navier-Stokes equations
\begin{equation}\tag{NS$^\ep$}\label{NS.ep}
\left\{
\begin{array}{ll}
-\Delta u^\ep+\nabla p^\ep
=-u^\ep\cdot\nabla u^\ep&\mbox{in}\ B^\ep_{1,+}(0)\\
\nabla\cdot u^\ep=0&\mbox{in}\ B^\ep_{1,+}(0)\\
u^\ep=0&\mbox{on}\ \Gamma^\ep_1(0)\,.
\end{array}
\right.
\end{equation}
An important step is the a priori choice of the parameter $\theta$ of Lemma \ref{lem.nonlinear.cpt} depending on the bound of the solution. Let $K_0$ be the constant in the Caccioppoli inequality in Lemma \ref{appendix.lem.Caccioppoli.ineq.} in Appendix \ref{appendix.Caccioppoli}.
%
\begin{lemma}\label{lem.nonlinear.itr}
Fix $M\in(0,\infty)$ and $\mu\in(0,1)$, and let $\theta_0\in(0,\frac18)$ be the constant in Lemma \ref{lem.nonlinear.cpt}. Choose $\theta=\theta(M,\mu)\in(0,\theta_0]$ sufficiently small to satisfy the conditions
\begin{align}\label{est7.proof.lem.nonlinear.itr}
\begin{split}
&4K_0^\frac12 (1-\theta^\mu)^{-1} (6 + 2^8M^4)^\frac12 M \theta^{\frac12} \le 1\,, \\
&\big(C_1(1-\theta^\mu)^{-1} (6 + 2^8M^4)^\frac12 M \theta^{\frac12}\big)^4 \\
& + (1-\theta)^{-\frac43} \big(C_1 (1-\theta^\mu)^{-1} (6 + 2^8M^4)^\frac12 M \theta^{\frac12}\big)^\frac43 \\
& \le \frac{(1-\theta)^{-2}}{4}\,, \\
&{\rm and} \quad C_2 (1-\theta^\mu)^{-2} (6 + 2^8M^4) M \theta \le 1\,,
\end{split}
\end{align}
where $C_1$ and $C_2$ are numerical constants appearing respectively in \eqref{est5.proof.lem.nonlinear.itr} and \eqref{est6.proof.lem.nonlinear.itr} in the proof. Moreover, let $\ep_\mu\in(0,1)$ be the corresponding constant for $\theta$ in Lemma \ref{lem.nonlinear.cpt}. Then for $k\in\N$ and 
\begin{align}\label{est1.lem.nonlinear.itr}
\ep\in(0, \theta^{k+2(2+\mu)(1-\delta_{1k})-1} \ep_{\mu}^{2-\delta_{1k}}]\,,
\end{align}
where $\delta_{1k}$ is the Kronecker delta, any weak solution $u^\ep=(u^\ep_1(x), u^\ep_2(x), u^\ep_3(x))^\top \in H^1(B^\ep_{1,+}(0))^3$ to \eqref{NS.ep} with
\begin{align}\label{est2.lem.nonlinear.itr}
\dashint_{B^\ep_{1,+}(0)} |u^\ep|^2 
\le M^2 
\end{align}
satisfies
\begin{align}\label{est3.lem.nonlinear.itr}
\dashint_{B^\ep_{\theta^k,+}(0)}
\big|u^\ep(x) 
- \sum_{j=1}^{2}
a^\ep_{k,j}
\big( x_3 {\bf e}_j + \ep v^{(j)}(\frac{x}{\ep})\big) \big|^2 \dd x
\le
M^2 \theta^{(2+2\mu)k}\,.
\end{align}
Here the number $a^\ep_{k,j}\in\R$, $j\in\{1,2\}$, is estimated as
\begin{align}\label{est4.lem.nonlinear.itr}
\sum_{j=1}^{2} |a^\ep_{k,j}| 
\le 
K_0^\frac12 \theta^{-\frac32} (1-\theta)^{-1} 
\big(6 + 2^6 (1-\theta)^{-2} M^4\big)^\frac12 M
\sum_{l=1}^{k} \theta^{\mu(l-1)}\,.
\end{align}
\end{lemma}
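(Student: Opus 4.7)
The plan is to argue by induction on $k$, with Lemma \ref{lem.nonlinear.cpt} serving as the one-step improvement. For the base case $k=1$, a weak solution $u^\ep$ of \eqref{NS.ep} is automatically a weak solution of \eqref{modified.NS.ep} with the trivial choices $(C_1^\ep, C_2^\ep) = (0,0)$, $\lambda^\ep = 1$, $F^\ep \equiv 0$, all of which lie in $[-1,1]$ and satisfy $\|F^\ep\|_{L^2}=0$. Since \eqref{est1.lem.nonlinear.itr} reduces to $\ep \in (0,\ep_\mu]$ when $k=1$, Lemma \ref{lem.nonlinear.cpt} applies directly and yields \eqref{est3.lem.nonlinear.itr} at rank $k=1$ with $a^\ep_{1,j} = (\overline{\partial_3 u^\ep_j})_{B^\ep_{\theta,+}(0)}$. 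The bound \eqref{est4.lem.nonlinear.itr} for $k=1$ then follows from Cauchy--Schwarz and the nonlinear Caccioppoli inequality (Lemma \ref{appendix.lem.Caccioppoli.ineq.}), whose constant scales algebraically in $M$, together with the hypothesis \eqref{est2.lem.nonlinear.itr}.

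For the inductive step, assume the conclusion holds at rank $k$ and let $\ep$ satisfy \eqref{est1.lem.nonlinear.itr} at rank $k+1$. Mimicking the linear proof of Lemma \ref{lem.itr}, I define
\begin{align*}
U^{\ep/\theta^k}(y) = \theta^{-(1+\mu)k}\Big(u^\ep(\theta^k y) - \sum_{j=1}^2 \theta^k a^\ep_{k,j}\, w^{(j)}(y)\Big)
\end{align*}
on $B^{\ep/\theta^k}_{1,+}(0)$, where $w^{(j)}(y) = y_3 {\bf e}_j + (\ep/\theta^k) v^{(j)}(\theta^k y/\ep)$, together with an analogous shifted pressure $P^{\ep/\theta^k}$. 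A direct computation --- using that each $w^{(j)}$ solves a Stokes system at scale $\ep/\theta^k$ and that $\nabla\cdot w^{(j)} = \nabla\cdot U^{\ep/\theta^k} = 0$ allows the cross-terms of the convective nonlinearity to be written in divergence form --- shows that $(U^{\ep/\theta^k}, P^{\ep/\theta^k})$ solves \eqref{modified.NS.ep} at scale $\ep/\theta^k$ with parameters
\begin{align*}
C^{\ep/\theta^k}_j = \theta^{2k} a^\ep_{k,j}, \qquad \lambda^{\ep/\theta^k} = \theta^{(2+\mu)k}, \qquad F^{\ep/\theta^k} = -\theta^{(2-\mu)k}\sum_{j,l=1}^{2} a^\ep_{k,j}\,a^\ep_{k,l}\, w^{(l)}\otimes w^{(j)}.
\end{align*}

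To apply Lemma \ref{lem.nonlinear.cpt} at scale $\ep/\theta^k$, I verify its hypotheses in turn. The $L^2$-bound $\dashint_{B^{\ep/\theta^k}_{1,+}(0)}|U^{\ep/\theta^k}|^2 \le M^2$ is exactly \eqref{est3.lem.nonlinear.itr} at rank $k$ after change of variables. The constraints $|C^{\ep/\theta^k}_j|,|\lambda^{\ep/\theta^k}|\le 1$ reduce, via \eqref{est4.lem.nonlinear.itr} at rank $k$ and the geometric bound $\sum_{l=1}^k \theta^{\mu(l-1)} \le (1-\theta^\mu)^{-1}$, to the first smallness condition in \eqref{est7.proof.lem.nonlinear.itr}. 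The bound $\|F^{\ep/\theta^k}\|_{L^2(B^{\ep/\theta^k}_{1,+}(0))} \le M\ep_\mu$ follows by controlling $\|w^{(l)}\otimes w^{(j)}\|_{L^2}$ through Lemma \ref{lem.est.boundarylayer} applied at scale $\ep/\theta^k$ and combining with \eqref{est4.lem.nonlinear.itr}; the exponents in \eqref{est1.lem.nonlinear.itr} are precisely calibrated to produce the required smallness. Lemma \ref{lem.nonlinear.cpt} then yields \eqref{est3.lem.nonlinear.itr} at rank $k+1$ with
\begin{align*}
a^\ep_{k+1,j} = a^\ep_{k,j} + \theta^{\mu k}\bigl(\overline{\partial_3 U^{\ep/\theta^k}_j}\bigr)_{B^{\ep/\theta^k}_{\theta,+}(0)}.
\end{align*}

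The main obstacle is closing the induction on $\sum_j |a^\ep_{k,j}|$ without any blow-up in $k$. Bounding the new increment by Cauchy--Schwarz yields $\theta^{\mu k}$ times the $L^2$-norm of $\nabla U^{\ep/\theta^k}$ on $B^{\ep/\theta^k}_{\theta,+}(0)$, which via the nonlinear Caccioppoli inequality picks up extra contributions from the convective term and from $F^{\ep/\theta^k}$, each scaling algebraically in $M$. If iterated naively, these contributions would force $\sum_j|a^\ep_{k,j}|$ to grow geometrically in $k$, and the induction would not close. The resolution, foreshadowed in the introduction as a Newton-shooting strategy, is the \emph{a priori} choice of $\theta = \theta(M,\mu)$ dictated by \eqref{est7.proof.lem.nonlinear.itr}: the first two conditions absorb the nonlinear contributions into the same factor $(6 + 2^6(1-\theta)^{-2}M^4)^{1/2}M$ that appears in \eqref{est4.lem.nonlinear.itr}, while the third condition guarantees that the $F^{\ep/\theta^k}$-contribution to the Caccioppoli bound is dominated by the linear part. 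Multiplying by $\theta^{\mu k}$ then contributes precisely the $(k+1)$-th term of the geometric series $\sum_l \theta^{\mu(l-1)}$, so the bound \eqref{est4.lem.nonlinear.itr} propagates with the \emph{same} constant to rank $k+1$.
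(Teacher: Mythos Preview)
Your overall induction strategy, the rescaling, and the handling of the coefficient bound \eqref{est4.lem.nonlinear.itr} all match the paper. However, there is a genuine gap in your choice of the forcing term $F^{\ep/\theta^k}$.

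You take $F^{\ep/\theta^k} = -\theta^{(2-\mu)k}\sum_{j,l} a^\ep_{k,j}a^\ep_{k,l}\, w^{(l)}\otimes w^{(j)}$ and claim that $\|F^{\ep/\theta^k}\|_{L^2}\le M\ep_\mu$ follows from Lemma \ref{lem.est.boundarylayer}. But $w^{(j)}(y)=y_3{\bf e}_j+(\ep/\theta^k)v^{(j)}(\theta^k y/\ep)$ contains the linear part $y_3{\bf e}_j$, whose contribution to $w^{(l)}\otimes w^{(j)}$ has $L^2$-norm of order $1$ on $B^{\ep/\theta^k}_{1,+}(0)$, \emph{independently of $\ep$}. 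Combined with $\sum_j|a^\ep_{k,j}|\sim C(M)\theta^{-3/2}$ from \eqref{est4.lem.nonlinear.itr}, your $F^{\ep/\theta^k}$ has $L^2$-norm at least of order $C(M)\,\theta^{(2-\mu)k-3}$. For $k=1$ this is $C(M)\theta^{-1-\mu}$, which \emph{blows up} as $\theta\to 0$; no smallness of $\ep$ helps, because the offending term does not depend on $\ep$. Hence the hypothesis \eqref{est1.lem.nonlinear.cpt} of Lemma \ref{lem.nonlinear.cpt} fails at the very first inductive step.

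The paper's fix is to observe that the purely linear piece $\big(\sum_j C^\ep_{j,k}y_3{\bf e}_j\big)\otimes\big(\sum_l C^\ep_{l,k}y_3{\bf e}_l\big)$ is divergence-free (since $j,l\in\{1,2\}$, so no $\partial_3$ hits $y_3^2$), and therefore can be subtracted from $F^{\ep/\theta^k}$ without changing the equation. After this subtraction every remaining term carries at least one factor $(\ep/\theta^k)v^{(j)}(\theta^k\cdot/\ep)$, and Lemma \ref{lem.est.boundarylayer} then gives $\|F^{\ep/\theta^k}\|_{L^2}\le C\theta^{(2-\mu)k-3}(\ep^{1/2}\theta^{-k/2}+\ep\theta^{-k})$, which the range \eqref{est1.lem.nonlinear.itr} converts into the required bound $M\ep_\mu$ via the third condition in \eqref{est7.proof.lem.nonlinear.itr}. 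This subtraction is the ``crucial fact'' flagged in the paper, and it is precisely what is missing from your argument.
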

%
\begin{proof} 
The proof is done by induction on $k\in\N$. For $k=1$, from $\ep\in(0, \ep_\mu]$, we can apply Lemma \ref{lem.nonlinear.cpt} to \eqref{NS.ep} by putting in \eqref{modified.NS.ep}, 
\begin{align*}
(U^{\ep}, P^{\ep}) = (u^\ep, p^\ep)\,, \qquad
\lambda^\ep = 1\,, \qquad
C^\ep_j = 0\,, \ \ j\in\{1,2\}\,, \qquad
F^\ep = 0\,.
\end{align*}
Thus, if we set $a^\ep_{1,j}=(\overline{\partial_{3} u^\ep_j})_{B^\ep_{\theta,+}(0)}$, $j\in\{1,2\}$, the assertion \eqref{est3.lem.nonlinear.itr} for the case $k=1$ follows. Moreover, from \eqref{est2.lem.nonlinear.itr}, the Caccioppoli inequality in Lemma \ref{appendix.lem.Caccioppoli.ineq.} with $\rho=\theta$ and $r=1$ leads to 
\begin{align*}
\|\nabla u^{\ep_k}\|_{L^2(B^{\ep_k}_{\theta,+}(0))}^2 
& \le 
K_0 (1-\theta)^{-2} \big(\|u^\ep\|_{L^2(B^\ep_{1,+}(0))}^2
+ (1-\theta)^{-2} \|u^\ep\|_{L^2(B^\ep_{1,+}(0))}^6 \big) \\
& \le 
K_0 (1-\theta)^{-2} \big(4 + 2^6(1-\theta)^{-2}M^4\big) M^2\,.
\end{align*}
Hence we obtain \eqref{est4.lem.nonlinear.itr} for $k=1$ from
\begin{align}\label{est1.proof.lem.nonlinear.itr}
\begin{split}
\sum_{j=1}^{2} |a^\ep_{1,j}|
& \le 
2 |B^\ep_{\theta,+}(0)|^{-\frac12}
\|\nabla u^\ep\|_{L^2(B^\ep_{\theta,+}(0))} \\
& \le 
K_0^\frac12 \theta^{-\frac32} (1-\theta)^{-1} 
\big(4 + 2^6(1-\theta)^{-2}M^4\big)^\frac12 M\,.
\end{split}
\end{align}
Next we assume that \eqref{est3.lem.nonlinear.itr} and \eqref{est4.lem.nonlinear.itr} hold for $k\in\N$ and let $\ep\in(0, \theta^{k+2(2+\mu)} \ep_{\mu}^2]$. We define $U^{\ep/\theta^k}=(U^{\ep/\theta^k}_1(y), U^{\ep/\theta^k}_2(y), U^{\ep/\theta^k}_3(y))^\top$ and $P^{\ep/\theta^k}=P^{\ep/\theta^k}(y)$ on $B^{\ep/\theta^k}_{1,+}(0)$ by
\begin{align*}
U^{\ep/\theta^k}(y)
& = \frac{1}{\theta^{(1+\mu)k}}
\Big( u^\ep(\theta^k y) 
- \sum_{j=1}^{2}
\theta^k a^\ep_{k,j} \big(y_3 {\bf e}_j 
+ \frac{\ep}{\theta^k} v^{(j)}(\frac{\theta^k y}{\ep})\big) \Big)\,, \\
P^{\ep/\theta^k}(y) 
& = \frac{1}{\theta^{\mu k}} \Big( p^\ep(\theta^k y) 
- \sum_{j=1}^{2} a^\ep_{k,j} q^{(j)}(\frac{\theta^k y}{\ep})\big) \Big)\,.
\end{align*}
After a direct computation, we see that $(U^{\ep/\theta^k}, P^{\ep/\theta^k})$ is a weak solution to
\begin{equation}\label{eq1.proof.lem.nonlinear.itr}
\left\{
\begin{aligned}
&-\Delta_y U^{\ep/\theta^k} +\nabla_y P^{\ep/\theta^k} 
= -\nabla_y \cdot 
\big( U^{\ep/\theta^k} \otimes (\theta^{k} b^{\ep/\theta^k})
+ (\theta^{k} b^{\ep/\theta^k}) \otimes U^{\ep/\theta^k} \big) \\
& \qquad\qquad\qquad\qquad\qquad\quad \ 
-\theta^{(2+\mu)k} U^{\ep/\theta^k} \cdot\nabla_y U^{\ep/\theta^k} \\
& \qquad\qquad\qquad\qquad\qquad\quad \ 
+ \nabla_y \cdot F^{\ep/\theta^k} \ \ \mbox{in}\ B^{\ep/\theta^k}_{1,+}(0) \\
&\nabla_y\cdot U^{\ep/\theta^k}=0 \ \ \mbox{in}\ B^{\ep/\theta^k}_{1,+}(0) \\
&U^{\ep/\theta^k}=0 \ \ \mbox{on}\ \Gamma^{\ep/\theta^k}_1(0)\,,
\end{aligned}\right.
\end{equation}
where $b^{\ep/\theta^k}=b^{\ep/\theta^k}(y)$ and $F^{\ep/\theta^k}=F^{\ep/\theta^k}(y)$ are respectively defined on $B^{\ep/\theta^k}_{1,+}(0)$ by
\begin{align*}
&b^{\ep/\theta^k}(y)
= \sum_{j=1}^{2} C^\ep_{j,k} 
\big( y_3 {\bf e}_j + \frac{\ep}{\theta^k} v^{(j)}(\frac{\theta^k y}{\ep})\big)\,, 
\qquad
C^\ep_{j,k}  = \theta^{k} a^{\ep}_{k,j}\,, \\
&F^{\ep/\theta^k}(y)
= -\theta^{-\mu k}
\Big(b^{\ep/\theta^k}(y)\otimes b^{\ep/\theta^k}(y)
- \big(\sum_{j=1}^{2} C^\ep_{j,k} y_3 {\bf e}_j\big)
\otimes \big(\sum_{j=1}^{2} C^\ep_{j,k} y_3 {\bf e}_j\big) \Big)\,. 
\end{align*}
Note that $\nabla_y\cdot b^{\ep/\theta^k}=0$ in $B^{\ep/\theta^k}_{1,+}(0)$ and $b^{\ep/\theta^k}=0$ on $\Gamma^{\ep/\theta^k}_1(0)$. Moreover, we can subtract 
\begin{align*}
\big(\sum_{j=1}^{2} C^\ep_{j,k} y_3 {\bf e}_j\big)
\otimes \big(\sum_{j=1}^{2} C^\ep_{j,k} y_3 {\bf e}_j\big) 
\end{align*}
from $b^{\ep/\theta^k}\otimes b^{\ep/\theta^k}$ beforehand, since it vanishes if we take its divergence. This is indeed a crucial fact in the following proof where we cancel singularities in $\theta^{-1}$ by choosing $\ep$ small with respect to $\theta$ as in \eqref{est1.lem.nonlinear.itr}. From the recurrence hypothesis, \eqref{est3.lem.nonlinear.itr} at rank $k$, we also have
\begin{align}\label{est2.proof.lem.nonlinear.itr}
\dashint_{B^{\ep/\theta^k}_{1,+}(0)} |U^{\ep/\theta^k}|^2 
\le M^2
\end{align}
by a change of variables. Let us estimate $b^{\ep/\theta^k}$ and $F^{\ep/\theta^k}$. From the recurrence hypothesis 
\begin{align}\label{est3.proof.lem.nonlinear.itr}
\sum_{j=1}^{2} |a^\ep_{k,j}|
\le 
4K_0^\frac12 \theta^{-\frac32} (1-\theta^\mu)^{-1} (6 + 2^8M^4)^\frac12 M
\end{align}
holds, where $(1-\theta)^{-1}\le2$ was used. We have uniformly in $k\in\N$, 
\begin{align}\label{est4.proof.lem.nonlinear.itr}
|\theta^{k} C^\ep_{j,k}|
\le 
4K_0^\frac12 (1-\theta^\mu)^{-1} (6 + 2^8M^4)^\frac12 M \theta^{\frac12}\leq 1\,
\end{align}
by \eqref{est7.proof.lem.nonlinear.itr}. Moreover, by \eqref{est1.lem.est.boundarylayer} in Lemma \ref{lem.est.boundarylayer} and $\ep\in(0, \theta^{k+2(2+\mu)} \ep_{\mu}^2]$, we see that 
\begin{align}\label{est5.proof.lem.nonlinear.itr}
\begin{split}
\|\nabla_y (\theta^{k} b^{\ep/\theta^k})\|_{L^2(B^{\ep/\theta^k}_{1,+}(0))} 
& \le
C \big(\sum_{j=1}^{2} |\theta^{k} C^\ep_{j,k}| \big)
(1 + \ep^\frac12 \theta^{-\frac{k}{2}}) \\
& \le
C_1 (1-\theta^\mu)^{-1} (6 + 2^8M^4)^\frac12 M \theta^{\frac12}\,,
\end{split}
\end{align}
where $C_1$ is independent of $k$, $M$, $\theta$, and $\ep$, while the definition of $F^{\ep/\theta^k}$ implies that for $y\in B^{\ep/\theta^k}_{1,+}(0)$,
\begin{align*}
|F^{\ep/\theta^k}(y)|
& \le
C \theta^{(2-\mu)k}
\big(\sum_{j=1}^{2} |a^\ep_{k,j}|\big)^2
\sum_{j=1}^{2} 
\big((\frac{\ep}{\theta^{k}}) \big|v^{(j)}(\frac{\theta^k y}{\ep})\big|
+ (\frac{\ep}{\theta^{k}})^2 \big|v^{(j)}(\frac{\theta^k y}{\ep})\big|^2 \big)\,.
\end{align*}
Thus, from \eqref{est2.lem.est.boundarylayer} with $m=0$ and $m=2$ in Lemma \ref{lem.est.boundarylayer}, we have again by $\ep\in(0, \theta^{k+2(2+\mu)} \ep_{\mu}^2]$,
\begin{align}\label{est6.proof.lem.nonlinear.itr}
\begin{split}
\|F^{\ep/\theta^k}\|_{L^2(B^{\ep/\theta^k}_{1,+}(0))}
& \le
C \theta^{(2-\mu)k-3} (1-\theta^\mu)^{-2} (6 + 2^8M^4) M^2
(\ep^\frac12 \theta^{-\frac{k}{2}} + \ep \theta^{-k}) \\
& \le
C \theta^{-1-\mu} (1-\theta^\mu)^{-2} (6 + 2^8M^4) M^2
(\ep_{\mu} \theta^{2+\mu} + \ep_{\mu}^2 \theta^{2(2+\mu)}) \\
& \le
\big(C_2 (1-\theta^\mu)^{-2} (6 + 2^8M^4) M \theta\big) M \ep_{\mu}\,,
\end{split}
\end{align}
where $C_2$ is independent of $k$, $M$, $\theta$, and $\ep$. Then, from \eqref{est2.proof.lem.nonlinear.itr} combined with \eqref{est4.proof.lem.nonlinear.itr} and \eqref{est6.proof.lem.nonlinear.itr} under \eqref{est7.proof.lem.nonlinear.itr}, since $\ep/\theta^k\in(0, \ep_\mu]$, we can apply Lemma \ref{lem.nonlinear.cpt} to \eqref{eq1.proof.lem.nonlinear.itr} by putting 
\begin{align*}
& (U^{\ep}, P^{\ep}) = (U^{\ep/\theta^k}, P^{\ep/\theta^k})\,, \qquad
\lambda^\ep=\theta^{(2+\mu)k}\,, \\
& C^\ep_j = \theta^{k} C^\ep_{j,k}\,, \ \ j\in\{1,2\}\,, \qquad
F^\ep = F^{\ep/\theta^k}
\end{align*}
in \eqref{modified.NS.ep} and find that
\begin{align*}
&\dashint_{B^{\ep/\theta^k}_{\theta,+}(0)}
\big|U^{\ep/\theta^k}(y) 
- \sum_{j=1}^{2}
(\overline{\partial_{y_3} U^{\ep/\theta^k}_j})_{B^{\ep/\theta^k}_{\theta,+}(0)} 
\big( y_3 {\bf e}_j + \frac{\ep}{\theta^k} v^{(j)}(\frac{\theta^k y}{\ep})\big) \big|^2 \dd y \\
&\le
M^2 \theta^{2+2\mu}\,.
\end{align*}
A change of variables yields that
\begin{align}\label{goal.proof.lem.nonlinear.itr}
\dashint_{B^\ep_{\theta^{k+1},+}(0)}
\big|u^\ep(x) 
- \sum_{j=1}^{2}
a^\ep_{k+1,j} \big( x_3 {\bf e}_j + \ep v^{(j)}(\frac{x}{\ep})\big) \big|^2 \dd x
\le
M^2 \theta^{(2+2\mu)(k+1)}\,,
\end{align}
where the number $a^\ep_{k+1,j}$, $j\in\{1,2\}$, is defined as
\begin{align}\label{def1.proof.lem.nonlinear.itr}
a^\ep_{k+1,j} = a^\ep_{k,j} 
+  \theta^{\mu k}
(\overline{\partial_{y_3} U^{\ep/\theta^k}_j})_{B^{\ep/\theta^k}_{\theta,+}(0)}\,.
\end{align}
Let us estimate $a^\ep_{k+1,j}$. 
By \eqref{est2.proof.lem.nonlinear.itr} and \eqref{est5.proof.lem.nonlinear.itr} under \eqref{est7.proof.lem.nonlinear.itr} we have
\begin{align*}
&\Big(\|\nabla_y (\theta^{k} b^{\ep/\theta^k})\|_{L^2(B^{\ep/\theta^k}_{1,+}(0))}^4 
+ (1-\theta)^{-\frac43} \|\nabla_y (\theta^{k} b^{\ep/\theta^k})\|_{L^2(B^{\ep/\theta^k}_{1,+}(0))}^\frac43
\Big) \\
&\quad
\times \|U^{\ep/\theta^k}\|_{L^2(B^{\ep/\theta^k}_{1,+}(0))}^2 \\
&\le
(1-\theta)^{-2} M^2\,.
\end{align*}
Then \eqref{est6.proof.lem.nonlinear.itr} under \eqref{est7.proof.lem.nonlinear.itr} and the Caccioppoli inequality applied to \eqref{eq1.proof.lem.nonlinear.itr} with $\rho=\theta$ and $r=1$ lead to
\begin{align*}
\begin{split}
& \|\nabla_y U^{\ep/\theta^k}\|_{L^2(B^{\ep/\theta^k}_{\theta,+}(0))}^2 \\
& \le 
K_0 (1-\theta)^{-2} \Big(
\|U^{\ep/\theta^k}\|_{L^2(B^{\ep/\theta^k}_{1,+}(0))}^2
+ (1-\theta)^{-2} \|U^{\ep/\theta^k}\|_{L^2(B^{\ep/\theta^k}_{1,+}(0))}^6 
+ 2M^2\Big) \\
& \le 
K_0 (1-\theta)^{-2} \big(6 + 2^6 (1-\theta)^{-2} M^4\big) M^2\,.
\end{split}
\end{align*}
Therefore from the H\"older inequality we obtain
\begin{align*}
\begin{split}
\sum_{j=1}^{2} 
\big|(\overline{\partial_{y_3} U^{\ep/\theta^k}_j})_{B^{\ep/\theta^k}_{\theta,+}(0)}\big|
& \le 
2 |B^{\ep/\theta^k}_{\theta,+}(0)|^{-\frac12}
\|\nabla_y U^{\ep/\theta^k}\|_{L^2(B^{\ep/\theta^k}_{\theta,+}(0))} \\
& \le 
K_0^\frac12 \theta^{-\frac32} (1-\theta)^{-1} 
\big(6 + 2^6 (1-\theta)^{-2} M^4\big)^\frac12
M\,.
\end{split}
\end{align*}
Thus, by the recurrence hypothesis, \eqref{est4.lem.nonlinear.itr} at rank $k$, and \eqref{def1.proof.lem.nonlinear.itr}, we have
\begin{align*}
\sum_{j=1}^{2} |a^\ep_{k+1,j}| 
& \le
\sum_{j=1}^{2} |a^\ep_{k,j}| 
+ \theta^{\mu k} \sum_{j=1}^{2} 
\big|(\overline{\partial_{y_3} U^{\ep/\theta^k}_j})_{B^{\ep/\theta^k}_{\theta,+}(0)}\big| \\
& \le 
K_0^\frac12 \theta^{-\frac32} (1-\theta)^{-1} 
\big(6 + 2^6 (1-\theta)^{-2} M^4\big)^\frac12 M
\sum_{l=1}^{k+1} \theta^{\mu(l-1)}\,,
\end{align*}
which with \eqref{goal.proof.lem.nonlinear.itr} proves the assertions \eqref{est3.lem.nonlinear.itr} and \eqref{est4.lem.nonlinear.itr} at rank $k+1$. This completes the proof.
\end{proof}
%
\subsection{Proof of Theorems \ref{theo.lip.nonlinear} and \ref{theo.lip.nonlinear.periodic}}\label{subsec.nonlinear.proof}
%
Firstly we prove Theorem \ref{theo.lip.nonlinear} by applying Lemma \ref{lem.nonlinear.itr}. Throughout this subsection, for given $M\in(0,\infty)$ and $\mu\in(0,1)$, let $\theta\in(0,\frac18)$ and $\ep_{\mu}\in(0,1)$ be the corresponding constants in Lemma \ref{lem.nonlinear.itr}. Note that, for any $k\in\N$, we have
\begin{align*}
(0, \theta^{k-1} (\theta^{2(2+\mu)} \ep_\mu^2)]
\subset (0, \theta^{k+2(2+\mu)(1-\delta_{1k})-1} \ep_{\mu}^{2-\delta_{1k}}]\,.
\end{align*}
%
\begin{proofx}{Theorem \ref{theo.lip.nonlinear}} We fix $\mu\in(0,1)$ and set $\ep^{(1)}=\theta^{2(2+\mu)}\ep_\mu^2$. Let $\ep\in(0, \ep^{(1)}]$. 
As in the proof of Theorem \ref{theo.lip} in Subsection \ref{subsec.linear.proof}, we can focus on the case $r\in[\ep/\ep^{(1)}, \theta]$. For 
any given $r\in[\ep/\ep^{(1)},\theta]$, there exists $k\in\N$ 
with $k\ge2$ 
such that $r\in(\theta^k, \theta^{k-1}]$. From the bound \eqref{est1.theo.lip.nonlinear} and $\ep\in(0,\theta^{k-1} \ep^{(1)}]$, one can apply Lemma \ref{lem.nonlinear.itr}. By using an easy estimate of $a^\ep_{k,j}\in\R$, $j\in\{1,2\}$:
\begin{align}\label{est1.proof.theo.lip.nonlinear}
\sum_{j=1}^{2} |a^\ep_{k,j}| 
\le 
C \theta^{-\frac32}  (1-\theta^\mu)^{-1} (1+M^4)^\frac12 M
\end{align}
with a constant $C$ depends only on $\|\gamma\|_{W^{1,\infty}(\R^2)}$, we see that
\begin{align*}
& \bigg(\dashint_{B^\ep_{r,+}(0)} |u^\ep|^2 \bigg)^\frac12 
\le
\bigg(\theta^{-3} \dashint_{B^\ep_{\theta^{k-1},+}(0)} |u^\ep|^2 \bigg)^\frac12 \\
& \le
\theta^{-\frac32}
\bigg(
\dashint_{B^\ep_{\theta^{k-1},+}(0)}
\big|u^\ep(x) 
- \sum_{j=1}^{2}
a^\ep_{k-1,j}
\big( x_3 {\bf e}_j + \ep v^{(j)}(\frac{x}{\ep})\big) \big|^2 \dd x
\bigg)^\frac12 \\
& \quad
+ \theta^{-\frac32}
\Big(\sum_{j=1}^{2} |a^\ep_{k-1,j}|\Big)
\bigg(\sum_{j=1}^{2} \dashint_{B^\ep_{\theta^{k-1},+}(0)}
\big| x_3 {\bf e}_j + \ep v^{(j)}(\frac{x}{\ep})\big) \big|^2 \dd x \bigg)^\frac12 \\
& \le
M \theta^{(1+\mu)(k-1)-\frac32} \\
& \quad
+ C \theta^{-3}  (1-\theta^\mu)^{-1} (1+M^4)^\frac12 M
\bigg(\sum_{j=1}^{2} \dashint_{B^\ep_{\theta^{k-1},+}(0)}
\big| x_3 {\bf e}_j + \ep v^{(j)}(\frac{x}{\ep})\big) \big|^2 \dd x \bigg)^\frac12\,.
\end{align*}
Then, in the same way as in the proof of Theorem \ref{theo.lip}, we have
\begin{align*}
\bigg(\dashint_{B^\ep_{r,+}(0)} |u^\ep|^2\bigg)^\frac12 
\le
\Big(\theta^{-\frac52-\mu} r^\mu
+ C \theta^{-4}  (1-\theta^\mu)^{-1} (1 + (\ep^{(1)})^\frac12)
(1+M^4)^\frac12
\Big) M r\,.
\end{align*}
Hence we obtain the assertion \eqref{est2.theo.lip.nonlinear} by letting $\mu=\frac12$ for instance and 
by defining $C^{(1)}_M$ by
\begin{align*}
C^{(1)}_M = \Big(\theta^{ -3} 
+ C \theta^{-4}  (1-\theta^{\frac12})^{-1} (1 + (\ep^{(1)})^\frac12)
(1+M^4)^\frac12
\Big) M\,.
\end{align*}
Indeed, it is easy to see that $C^{(1)}_M$ increases monotonically in $M$ if one chooses $\theta$ to be the supremum of the numbers $\theta$ satisfying \eqref{est7.proof.lem.nonlinear.itr} with 
$\mu=\frac12$. 
Moreover $C^{(1)}_M$ converges to zero when $M\to0$ from this choice of $\theta$. The proof is complete if we combine the trivial estimate for $r\in(\theta, 1]$. 
\end{proofx}
%
\noindent Next we prove Theorem \ref{theo.lip.nonlinear.periodic}. Let $\alpha^{(j)}\in\R^3$, $j\in\{1,2\}$, be the constant vector in Proposition \ref{prop.per.BL}. 
%
\begin{proofx}{Theorem \ref{theo.lip.nonlinear.periodic}} As in the proof of Theorem \ref{theo.lip.nonlinear}, we set $\ep^{(2)}=\theta^{2(2+\mu)}\ep_\mu^2$ and take $\ep\in(0, \ep^{(2)}]$.

\noindent (i) 
We focus on the case $r\in[\ep/\ep^{(1)}, \theta]$ again as in the proof of Theorem \ref{theo.lip.nonlinear}. Since every $r\in[\ep/\ep^{(2)},\theta]$ satisfies $r\in(\theta^k, \theta^{k-1}]$ with some $k\in\N$ satisfying $k\ge2$ we have
\begin{align*}
\begin{split}
& \bigg( \dashint_{B^\ep_{r,+}(0)}
\big|u^\ep(x) 
- \sum_{j=1}^{2}
 a^\ep_{k-1,j} 
x_3 {\bf e}_j \big|^2 \dd x
\bigg)^\frac12 \\
& \le
\bigg(\theta^{-3}
\dashint_{B^\ep_{\theta^{k-1},+}(0)}
\big|u^\ep(x) 
- \sum_{j=1}^{2}
 a^\ep_{k-1,j} 
x_3 {\bf e}_j \big|^2 \dd x
\bigg)^\frac12 \\
& \le
M \theta^{(1+\mu)(k-1)-{\frac32}} \\
& \quad
+ C \theta^{-3}  (1-\theta^\mu)^{-1} (1+M^4)^\frac12 M \ep
\bigg(
\sum_{j=1}^{2}
\dashint_{B^\ep_{\theta^{k-1},+}(0)}
\big| v^{(j)}(\frac{x}{\ep}) \big|^2 \dd x
\bigg)^\frac12\,,
\end{split}
\end{align*}
where Lemma \ref{lem.nonlinear.itr} has been applied in the third line. The estimate \eqref{est1.proof.theo.lip.nonlinear} for $a^\ep_{k-1,j} \in\R$, $j\in\{1,2\}$, is also used in the same line. Then \eqref{est2.lem.est.boundarylayer} with $m=0$ in Lemma \ref{lem.est.boundarylayer} and $\theta^{k-1}\in(0,\theta^{-1} r)$ lead to
\begin{align*}
\begin{split}
& \bigg( \dashint_{B^\ep_{r,+}(0)}
\big|u^\ep(x) 
- \sum_{j=1}^{2}
a^\ep_{k-1,j} x_3 {\bf e}_j \big|^2 \dd x
\bigg)^\frac12 \\
& \le
\Big(\theta^{-\frac52-\mu} r^{1+\mu} 
+ C \theta^{-\frac72} (1-\theta^\mu)^{-1} (1+M^4)^\frac12 \ep^\frac12 r^{\frac12} 
\Big) M\,.
\end{split}
\end{align*}
Hence we obtain the assertion \eqref{est1.theo.lip.nonlinear.periodic} by defining $c^\ep_{r,j}$ and $C^{(2)}_M$ by
\begin{align}\label{def1.proof.theo.lip.nonlinear.periodic}
c^\ep_{r,j} = a^\ep_{k-1,j} \,, \qquad
C^{(2)}_M = \Big(\theta^{-\frac52-\mu} 
+ C \theta^{-\frac72} (1-\theta^\mu)^{-1} (1+M^4)^\frac12
\Big) M\,,
\end{align}
and by combining the trivial estimate for $r\in(\theta, 1]$.

\noindent (ii) In a similar way as in (i), for $r\in[\ep/\ep^{(2)},\theta]$ with $r\in(\theta^k, \theta^{k-1}]$, we have
\begin{align}\label{est1.proof.theo.lip.nonlinear.periodic}
\begin{split}
& \bigg( \dashint_{B^\ep_{r,+}(0)}
\big|u^\ep(x) 
- \sum_{j=1}^{2}
c^\ep_{r,j}
(x_3 {\bf e}_j + \ep \alpha^{(j)}) \big|^2 \dd x
\bigg)^\frac12 \\
& \le
M \theta^{(1+\mu)(k-1)-\frac32} \\
& \quad
+ C \theta^{-3}(1-\theta^\mu)^{-1}(1+M^4)^\frac12 M \ep
\bigg(
\sum_{j=1}^{2}
\dashint_{B^\ep_{\theta^{k-1},+}(0)}
\big| v^{(j)}(\frac{x}{\ep}) - \alpha^{(j)} \big|^2 \dd x
\bigg)^\frac12\,,
\end{split}
\end{align}
where Lemma \ref{lem.nonlinear.itr} and the estimate \eqref{est1.proof.theo.lip.nonlinear} are applied again. Moreover, the notation $c^\ep_{r,j} =a^\ep_{k-1,j} $ in \eqref{def1.proof.theo.lip.nonlinear.periodic} is used. Then \eqref{est2.lem.est.boundarylayer} with $m=0$ in Lemma \ref{lem.est.boundarylayer} and \eqref{est1.prop.per.BL} in Proposition \ref{prop.per.BL} lead to
\begin{align*}
\begin{split}
&
\bigg( \sum_{j=1}^{2} \dashint_{B^\ep_{\theta^{k-1},+}(0)}
\big| v^{(j)}(\frac{x}{\ep}) - \alpha^{(j)} \big|^2 \dd x
\bigg)^\frac12 \\
& \le
\theta^{-\frac32(k-1)}
\bigg( \sum_{j=1}^{2} \int_{B^\ep_{2\ep,+}(0)}
\big| v^{(j)}(\frac{x}{\ep}) - \alpha^{(j)} \big|^2 \dd x
\bigg)^\frac12 \\
& \quad
+ \theta^{-\frac32(k-1)}
\bigg(
\sum_{j=1}^{2} \int_{(-\theta^{k-1}, \theta^{k-1})^2}
\int_{\ep}^{\theta^{k-1}}
\big| v^{(j)}(\frac{x}{\ep}) - \alpha^{(j)} \big|^2 \dd x_3 \dd x'
\bigg)^\frac12 \\
& \le
C (\ep^\frac32 \theta^{-\frac32(k-1)}
+ \ep^\frac12 \theta^{-\frac12(k-1)})\,.
\end{split}
\end{align*}
Hence, by $\theta^{k-1}\in(0,\theta^{-1} r)$, $r^{-1}\in[\theta^{-(k-1)}, \theta^{-k})$, and $\ep\in(0,\theta^{k-1} \ep^{(2)}]$, from \eqref{est1.proof.theo.lip.nonlinear.periodic} we find
\begin{align*}
\begin{split}
& \bigg(
\dashint_{B^\ep_{r,+}(0)}
\big|u^\ep(x) 
- \sum_{j=1}^{2}
c^\ep_{r,j}
(x_3 {\bf e}_j + \ep \alpha^{(j)}) \big|^2 \dd x
\bigg)^\frac12 \\
& \le
\Big(\theta^{-\frac52-\mu} r^{1+\mu} 
+ C \theta^{-3}  (1-\theta^\mu)^{-1} 
(1+\ep^{(2)})
(1+M^4)^\frac12 \ep^\frac32 r^{-\frac12} 
\Big) M\,.
\end{split}
\end{align*}
The assertion \eqref{est2.theo.lip.nonlinear.periodic} follows by setting
\begin{align*}
\widetilde{C^{(2)}_M} = \Big(\theta^{-\frac52-\mu} 
+ C \theta^{-3} (1-\theta^\mu)^{-1} 
(1+\ep^{(2)})
(1+M^4)^\frac12
\Big) M\,.
\end{align*}
This completes the proof of Theorem \ref{theo.lip.nonlinear.periodic}
 by using the trivial estimate for $r\in(\theta, 1]$. 
\end{proofx}
%
\appendix
%
\section{Regularity theory}\label{appendix.Regularity}
\noindent In this appendix we recall the regularity results for the Stokes equations
\begin{equation}\label{appendix.S}
\left\{
\begin{array}{ll}
-\Delta u+\nabla p = 0&\mbox{in}\ B_{\frac12,+}(0)\\
\nabla\cdot u=0&\mbox{in}\ B_{\frac12,+}(0)\\
u=0&\mbox{on}\ \Gamma_{\frac12}(0)
\end{array}
\right.
\end{equation}
and the modified Navier-Stokes equations
\begin{equation}\label{appendix.NS}
\left\{
\begin{array}{ll}
-\Delta u+\nabla p 
= -\nabla \cdot (u \otimes b + b \otimes u) 
- \lambda u\cdot\nabla u &\mbox{in}\ B_{\frac12,+}(0)\\
\nabla\cdot u=0&\mbox{in}\ B_{\frac12,+}(0)\\
u=0&\mbox{on}\ \Gamma_{\frac12}(0)\,,
\end{array}
\right.
\end{equation}
where $b=b(x)$ is defined as $b(x) = \sum_{j=1}^{2} C_j x_3 {\bf e}_j$.
%
\begin{lemma}\label{appendix.lem.regl}
{\rm (i)} Let $(u, p) \in H^1(B_{\frac12,+}(0))^3\times L^2(B_{\frac12,+}(0))$ be a weak solution to \eqref{appendix.S}. Then for all $r\in(0,\frac{7}{16})$, we have
\begin{align}\label{est1.appendix.lem.regl}
u\in C^\infty(\overline{B_{r,+}(0)})^3\,,\qquad
p\in C^\infty(\overline{B_{r,+}(0)})\,,
\end{align}
and for all $k\in\N\cup\{0\}$, we have
\begin{align}\label{est2.appendix.lem.regl}
\|u\|_{C^k(\overline{B_{r,+}(0)})} \le K_1\|u\|_{L^2(B_{\frac12,+}(0))}\,,
\end{align}
where the constant $K_1$ depends only on 
$k$. \\
\noindent {\rm (ii)} Let $(\lambda,C_1,C_2)\in\R^3$ and let $(u, p) \in H^1(B_{\frac12,+}(0))^3\times L^2(B_{\frac12,+}(0))$ be a weak solution to \eqref{appendix.NS}.
Then for all $r\in(0,\frac{7}{16})$, we have
\begin{align}\label{est3.appendix.lem.regl}
u\in C^\infty(\overline{B_{r,+}(0)})^3\,,\qquad
p\in C^\infty(\overline{B_{r,+}(0)})\,,
\end{align}
and for all $k\in\N\cup\{0\}$, we have
\begin{align}\label{est4.appendix.lem.regl}
\|u\|_{C^k(\overline{B_{r,+}(0)})} \le K_2\,,
\end{align}
where the constant $K_2$ depends nonlinearly on $(\lambda,C_1,C_2)$, $\|u\|_{L^2(B_{\frac12,+}(0))}$, and $k$.
\end{lemma}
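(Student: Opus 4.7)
Part (i) is a classical boundary regularity result for the Stokes system, which I would establish by a bootstrap on local Sobolev spaces $H^k$ up to the flat portion of the boundary, on nested half-balls with radii decreasing monotonically from $\tfrac12$ to $r$. The argument has three steps.

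\emph{Step 1 (pressure control).} On every intermediate half-ball one normalises $p$ by subtracting its mean and estimates $\|p-(\overline p)\|_{L^2}$ in terms of $\|\nabla u\|_{L^2}$ via the duality argument behind \eqref{e.estpressure}, using the Bogovski\u\i{} solution operator for the divergence problem on half-balls to produce a divergence-free test field.

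\emph{Step 2 (tangential regularity).} Since both $B_{\frac12,+}(0)$ and $\Gamma_{\frac12}(0)$ are translation-invariant in $x_1$ and $x_2$, the tangential difference quotients $\tau^h_j u$ with $j\in\{1,2\}$ solve the same Stokes system and still vanish on the flat portion. The Caccioppoli inequality of Appendix \ref{appendix.Caccioppoli}, combined with Step 1 applied to $\tau^h_j p$, yields uniform-in-$h$ $L^2$ bounds on $\nabla \tau^h_j u$ and thus $\partial_j \nabla u \in L^2_{loc}$ for $j\in\{1,2\}$.

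\emph{Step 3 (normal regularity and iteration).} The normal second derivative is recovered algebraically from the identity $\partial_3^2 u=-\partial_1^2 u-\partial_2^2 u+\nabla p$ together with the divergence-free condition; tangential derivatives of $p$ are controlled in $L^2$ by applying Step 1 to the tangential difference quotients of the system. Iterating Steps 1--3 on nested half-balls and invoking the Sobolev embedding yields \eqref{est1.appendix.lem.regl} and \eqref{est2.appendix.lem.regl}.

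For part (ii), the plan is a bootstrap in $L^p$-based Stokes regularity, exploiting that $b(x)=\sum_{j=1}^2 C_j x_3{\bf e}_j$ is smooth and divergence-free. Writing the equation as a Stokes system with right-hand side
\begin{equation*}
f = -\nabla \cdot (u\otimes b + b\otimes u) - \lambda\, u \cdot \nabla u,
\end{equation*}
the embedding $H^1\hookrightarrow L^6$ in three dimensions gives $u\cdot\nabla u\in L^{3/2}$ and $u\otimes b+b\otimes u\in L^6$, hence the forcing lies in $W^{-1,3}$. Applying the $L^p$ theory for the Stokes system up to the flat boundary on a slightly smaller half-ball yields $u\in W^{2,3/2}$, so $\nabla u\in L^3$ and $u\in L^p$ for every $p<\infty$. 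Re-inserting this information gives $u\cdot\nabla u\in L^2$, hence $u\in W^{2,2}$, and therefore $u\in L^\infty$ with $\nabla u\in L^6$. A further round gives $u\cdot\nabla u\in L^6$, whence $u\in W^{2,6}\hookrightarrow C^{1,1/2}$, at which point a Schauder-type bootstrap on nested half-balls yields \eqref{est3.appendix.lem.regl} and \eqref{est4.appendix.lem.regl}.

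The main point requiring attention is not smoothness itself but the tracking of constants: since $b$ is affine in $x_3$ with coefficients $(C_1,C_2)$ and the nonlinearity carries the factor $\lambda$, the constants produced in each bootstrap step depend polynomially on $(\lambda,C_1,C_2)$ and on $\|u\|_{L^2(B_{\frac12,+}(0))}$, which accounts for the nonlinear dependence of $K_2$ announced in \eqref{est4.appendix.lem.regl}.
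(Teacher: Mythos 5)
Your plan is correct in substance, but it follows a genuinely different route from the paper. For part (i) you use the classical translation method: tangential difference quotients $\tau^h_j u$, $j\in\{1,2\}$, which solve the same Stokes system and vanish on the flat portion, a Caccioppoli-type bound uniform in $h$, and then algebraic recovery of the normal derivatives from the equations; the paper instead never differentiates the system at the boundary. It introduces nested smooth domains $\Omega_l$ squeezed between $B_{r,+}(0)$ and $B_{\frac{7}{16},+}(0)$, corrects the cut-off $\varphi_l u$ by a Bogovski\u{\i} field $\mathbb{B}[\nabla\varphi_l\cdot u]$ so as to get a homogeneous Dirichlet, divergence-free Stokes problem in $\Omega_{l-1}$ with a lower-order right-hand side $f_l(u,p)$, and then quotes the global $W^{l,2}$ regularity theory for the Stokes system in smooth domains (Sohr), inducting on $l$ entirely in the $L^2$ Sobolev scale, with the pressure handled by $\|p-(\overline p)\|_{L^2}\le C\|\nabla p\|_{W^{-1,2}}$ and the Caccioppoli inequality converting $\|\nabla u\|_{L^2}$ into $\|u\|_{L^2}$ in the constants. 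Similarly for part (ii): you run an $L^p$ bootstrap ($f\in W^{-1,3}$, then $W^{2,3/2}\to W^{2,2}\to W^{2,6}\hookrightarrow C^{1,1/2}$, then Schauder), which requires Cattabriga-type $L^p$ Stokes estimates up to the boundary and Schauder theory, whereas the paper stays in $W^{l,2}$ spaces: it first gets $\nabla u\in L^3$ from the cited result in Sohr's book, then $W^{2,2}$, and then iterates in $l$ with the same cut-off/Bogovski\u{\i} localization. Your approach is more self-contained for (i) (only translation invariance of the flat boundary is used) but needs heavier cited machinery for (ii); the paper's approach trades the delicate normal-derivative/pressure recovery for standard global theorems and a single localization device used in both parts.

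Two points in your write-up deserve care, though neither is a fatal gap. First, in Step 3 of part (i) the identity as written does not by itself give $\partial_3^2 u$, since $\partial_3 p$ is not yet controlled: the correct order is to get $\partial_3^2 u_3$ from differentiating the divergence constraint, $\partial_1 p,\partial_2 p$ from the tangential difference quotients, then $\partial_3^2 u_1,\partial_3^2 u_2$ from the first two momentum equations, and finally $\partial_3 p$ from the third. Second, in part (ii) the local $L^p$ estimates "on a slightly smaller half-ball" require the same kind of divergence correction after cutting off (Bogovski\u{\i}, or the inhomogeneous-divergence version of the $L^p$ theory); you mention this device only in part (i), so make the localization explicit there as well, and note that the resulting constants indeed depend (polynomially at each step, hence nonlinearly overall) on $(\lambda,C_1,C_2)$ and $\|u\|_{L^2(B_{\frac12,+}(0))}$, as claimed in \eqref{est4.appendix.lem.regl}.
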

%
\begin{proof}
(i) Fix $r\in(0,\frac{7}{16})$ and for $l\in\N$ with $l\ge2$ let $r<\ldots\, <r_l<r_{l-1}<\ldots\, <\frac7{16}$ and $\Omega_l$ be a domain with a smooth boundary such that
\begin{align*}
B_{r,+}(0) \subset \Omega_{l} \subset B_{r_{l},+}(0)
\subset \Omega_{l-1} \subset B_{r_{l-1},+}(0) \subset B_{\frac{7}{16},+}(0)\,, 
\qquad 
\Gamma_r(0) \subset \overline{\Omega_{l}}\cap\Gamma_1(0)\,.
\end{align*}
Then it suffices to show for all $l\in\N$ with $l\ge2$
\begin{align}\label{est1.proof.appendix.lem.regl}
\|u\|_{W^{l,2}(\Omega_{l})} + \|p\|_{W^{l-1,2}(\Omega_{l})}
\le L_l\|u\|_{L^2(B_{\frac12,+})}\,,
\end{align}
where $L_l$ depends only on $l$. Let $\varphi_l\in C_0^\infty(\overline{\Omega_{l-1}})$ be a cut-off function  such that
\begin{align*}
\varphi_l =1 \ \, {\rm in} \ \, \Omega_l\,, \qquad
\supp \varphi_l \subset \Omega_{l-1}\setminus\Gamma_1(0)\,,
\end{align*}
and let $\mathbb{B}[\nabla \varphi_l \cdot u]$ be the Bogovskii corrector in $\Omega_{l-1}$. Note that $\mathbb{B}[\nabla \varphi_l \cdot u]$ satisfies 
\begin{align*}
\supp\mathbb{B}[\nabla \varphi_l \cdot u] \subset \Omega_{l-1}\,, \qquad 
\nabla\cdot\mathbb{B}[\nabla \varphi_l \cdot u] = \nabla \varphi_l \cdot u
\end{align*}
and estimates for all $m\in\N$
\begin{align*}
\|\nabla^{m+1} \mathbb{B} [\nabla \varphi_l \cdot u]\|_{L^2(\Omega_{l-1})} 
\le C 
\|\nabla^{m}(\nabla \varphi_l \cdot u) \|_{L^2(\Omega_{l-1})}\,.
\end{align*}
See \cite{BS} or \cite{Galdibook} for a proof. Then by setting
\begin{align}\label{def1.proof.appendix.lem.regl}
u_l = \varphi_l u - \mathbb{B} [\nabla \varphi_l \cdot u]\,, \qquad 
p_l = \varphi_l p\,,
\end{align}
we easily see that $(u_l, p_l)$ solves
\begin{equation*}
\left\{
\begin{array}{ll}
-\Delta u_l+\nabla p_l = f_l(u,p) &\mbox{in}\ \Omega_{l-1}\\
\nabla\cdot u_l =0&\mbox{in}\ \Omega_{l-1}\\
u_l=0&\mbox{on}\ \partial\Omega_{l-1}\,,
\end{array}
\right.
\end{equation*}
where 
\begin{align*}
f_l(u,p) = -(\Delta \varphi_l) u - 2\nabla u \nabla\varphi_l 
+ \Delta \mathbb{B} [\nabla \varphi_l \cdot u] + p\nabla\varphi_l\,.
\end{align*}
Now we prove \eqref{est1.proof.appendix.lem.regl} by induction on $l\in\N$ with $l\ge2$. We assume that $\int_{B_{\frac{7}{16},+}(0)}p=0$ without loss of generality. For $l=2$, from $\|p\|_{L^2(B_{\frac{7}{16},+}(0))}\le \|\nabla p\|_{W^{-1,2}(B_{\frac{7}{16},+}(0))}$ and $\nabla p=\Delta u$, we have
\begin{align*}
\|f_2(u,p)\|_{L^2(\Omega_{1})} 
& \le 
C(\|\nabla u\|_{L^2(B_{\frac{7}{16},+}(0))} 
+ \|p\|_{L^2(B_{\frac{7}{16},+}(0))}) \\
& \le 
C \|\nabla u\|_{L^2(B_{\frac{7}{16},+}(0))}\,,
\end{align*}
which implies
\begin{align*}
\|u_2\|_{W^{2,2}(\Omega_{1})} + \|p_2\|_{W^{1,2}(\Omega_{1})}
\le \widetilde{L_2}
\end{align*}
by the regularity theory of the Stokes equations; see \cite[Subsection 1.5, 
III]{So} for example. Moreover, one can choose $\widetilde{L_2}$ depending on $\|u\|_{L^2(B_{\frac12,+})}$ rather than on $\|\nabla u\|_{L^2(B_{\frac{7}{16},+}(0))}$ thanks to the Caccioppoli inequality in Lemma \ref{appendix.lem.Caccioppoli.ineq.} in Appendix \ref{appendix.Caccioppoli}. Then the definitions of $\varphi_2$ and $u_2$ imply \eqref{est1.proof.appendix.lem.regl} when $l=2$. Next we assume that \eqref{est1.proof.appendix.lem.regl} holds for $l\in\N$ with $l\ge2$. Since we have
\begin{align*}
\|f_{l+1}(u,p)\|_{W^{l-1,2}(\Omega_{l})} 
& \le 
C(\|u\|_{W^{l,2}(\Omega_{l})}  + \|p\|_{W^{l-1,2}(\Omega_{l})} ) \\
& \le 
C L_l\,,
\end{align*}
by the regularity theory again we see that
\begin{align*}
\|u_{l+1}\|_{W^{l+1,2}(\Omega_{l})} + \|p_{l+1}\|_{W^{l,2}(\Omega_{l})}
\le \widetilde{L_{l+1}}\,,
\end{align*}
which leads to \eqref{est1.proof.appendix.lem.regl} when $l+1$ from the definitions of $\varphi_{l+1}$ and $u_{l+1}$. Hence we have proved \eqref{est1.proof.appendix.lem.regl} for all $l\in\N$ with $l\ge2$, and therefore, 
the assertions \eqref{est1.appendix.lem.regl} and \eqref{est2.appendix.lem.regl} from the Sobolev embedding.

\noindent (ii) Fix $r\in(0,\frac{7}{16})$ and for $l\in\N$ with $l\ge2$ let us take the same domain $\Omega_l$ and cut-off function $\varphi_l$ as in (i). Then we see that $(u_l, p_l)$ defined in \eqref{def1.proof.appendix.lem.regl} now solves
\begin{equation}\label{eq1.proof.appendix.lem.regl}
\left\{
\begin{array}{ll}
-\Delta u_l+\nabla p_l
= - \lambda \varphi_l (u\cdot\nabla u)
+ g_l(u,p) &\mbox{in}\ \Omega_{l-1}\\
\nabla\cdot u_l=0&\mbox{in}\ \Omega_{l-1}\\
u_l=0&\mbox{on}\ \partial\Omega_{l-1}\,,
\end{array}
\right.
\end{equation}
where 
\begin{align*}
g_l(u,p) 
& = -(\Delta \varphi_l) u - 2\nabla u \nabla\varphi_l  
+ \Delta \mathbb{B} [\nabla \varphi_l \cdot u] + p\nabla\varphi_l
-\varphi_l \nabla \cdot (u \otimes b + b \otimes u)\,.
\end{align*}
From the same reasoning as in (i) we aim at proving
\begin{align}\label{est2.proof.appendix.lem.regl}
\|u\|_{W^{l-1,2}(\Omega_{l})} + \|p\|_{W^{l-2,2}(\Omega_{l})}
\le L_l
\end{align}
by induction on $l\in\N$ with $l\ge3$. Here $L_l$ depends nonlinearly on $(\lambda,C_1,C_2)$, $\|u\|_{L^2(B_{\frac12,+}(0))}$, and $l$. We assume that $\int_{B_{\frac{7}{16},+}(0)}p=0$ again. Firstly let us consider the equations \eqref{eq1.proof.appendix.lem.regl} with $l=2$. Then, by following the argument in the proof of \cite[Theorem 3.6.1, III]{So}, we see that
\begin{align*}
\|\nabla u_2\|_{L^{3}(\Omega_{1})} 
\le C(\|\nabla u\|_{L^2(B_{\frac{7}{16},+}(0))}^2 
+ \|\nabla u\|_{L^2(B_{\frac{7}{16},+}(0))})\,,
\end{align*}
and therefore, from the definition of $u_{2}$, that 
\begin{align}\label{est3.proof.appendix.lem.regl}
\|\nabla u\|_{L^{3}(\Omega_{2})} 
\le C(\|\nabla u\|_{L^2(B_{\frac{7}{16},+}(0))}^2 
+ \|\nabla u\|_{L^2(B_{\frac{7}{16},+}(0))})\,.
\end{align}
Now let us us go back to the equations \eqref{eq1.proof.appendix.lem.regl} with $l=3$. Then we have
\begin{align*}
\|g_3(u,p)\|_{L^2(\Omega_{2})} 
\le 
C \|\nabla u\|_{L^2(B_{\frac{7}{16},+}(0))}
\end{align*}
and, from \eqref{est3.proof.appendix.lem.regl} and the Sobolev inequality, 
\begin{align*}
\|\lambda \varphi_3 (u\cdot\nabla u)\|_{L^{2}(\Omega_{2})} 
\le C\|\nabla u\|_{L^2(B_{\frac{7}{16},+}(0))}
(\|\nabla u\|_{L^2(B_{\frac{7}{16},+}(0))}^2 
+ \|\nabla u\|_{L^2(B_{\frac{7}{16},+}(0))})\,.
\end{align*}
Then the regularity theory of the Stokes equations leads to 
\begin{align*}
\|u_3\|_{W^{2,2}(\Omega_{2})} + \|p_3\|_{W^{1,2}(\Omega_{2})}
\le \widetilde{L_3}\,,
\end{align*}
where, thanks to the Caccioppoli inequality, $\widetilde{L_3}$ can be chosen to depend on $\|u\|_{L^2(B_{\frac12,+})}$ rather than $\|\nabla u\|_{L^2(B_{\frac{7}{16},+}(0))}$. Therefore we have \eqref{est2.proof.appendix.lem.regl} when $l=3$. Next we assume that \eqref{est2.proof.appendix.lem.regl} holds for $l\in\N$ with $l\ge3$. Then by the Sobolev inequality we can obtain
\begin{align*}
\|g_{l+1}(u,p)\|_{W^{l-2,2}(\Omega_{l})} 
+ \|\lambda \varphi_{l+1} (u\cdot\nabla u)\|_{W^{l-2,2}(\Omega_{l})}
\le 
C(L_l)\,,
\end{align*}
where the constant $C(L_l)$ depending nonlinearly on $L_l$; see the proof of \cite[Theorem 3.6.1, III]{So} for details. Then by the regularity theory again we see that
\begin{align*}
\|u_{l+1}\|_{W^{l,2}(\Omega_{l})} + \|p_{l+1}\|_{W^{l-1,2}(\Omega_{l})}
\le \widetilde{L_{l+1}}\,,
\end{align*}
and that \eqref{est2.proof.appendix.lem.regl} holds when $l+1$. Hence we have \eqref{est2.proof.appendix.lem.regl} for all $l\in\N$ with $l\ge3$, which implies the assertions \eqref{est3.appendix.lem.regl} and \eqref{est4.appendix.lem.regl} from the Sobolev embedding. The proof of Lemma \ref{appendix.lem.regl} is complete.
\end{proof}
%
\section{Caccioppoli inequality}\label{appendix.Caccioppoli}
\noindent In this appendix we prove the Caccioppoli inequality for the Stokes and Navier-Stokes equations. Firstly we prepare a technical lemma which will be used in the proof of Lemma \ref{appendix.lem.Caccioppoli.ineq.}.
%
\begin{lemma}\label{appendix.lem.itr.}
Let $f$, $g$, $h_1$, $h_2$, and $h_3$ are non-negative and monotone increasing functions in $C^1([0,1])$ such that for some $\delta\in(0,\frac{1}{16})$ and for all $0<\rho<r\le1$,
\begin{align*}
f(\rho) \le \delta f(r)  
+ C_\delta \Big( g(r) 
+ \frac{h_1(r)}{(r-\rho)^\frac43} 
+ \frac{h_2(r)}{(r-\rho)^2} 
+ \frac{h_3(r)}{(r-\rho)^4} \Big)\,,
\end{align*}
where the constant $C_\delta$ depends on $\delta$. Then we have for all $0<\rho<r\le1$, 
\begin{align}\label{est.appendix.lem.itr.}
f(\rho) 
\le 
C_\delta \Big(
\frac{g(r)}{1-\delta} 
+ \frac{2^\frac43}{1-2^\frac43\delta} \frac{h_1(r)}{(r-\rho)^\frac43}
+ \frac{4}{1-4\delta} \frac{h_2(r)}{(r-\rho)^2}
+ \frac{16}{1-16\delta} \frac{h_3(r)}{(r-\rho)^4}
\Big)\,.
\end{align}
\end{lemma}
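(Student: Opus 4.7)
The plan is a standard Widman-type hole-filling iteration, adapted to accommodate the three different blow-up rates in $(r-\rho)$. Fix $0 < \rho < r \le 1$ and introduce the geometric sequence $\rho_k = r - \lambda^k(r-\rho)$ for a parameter $\lambda \in (0,1)$ to be chosen below. Then $\rho_0 = \rho$, the sequence $\rho_k$ increases monotonically to $r$, and $\rho_{k+1} - \rho_k = (1-\lambda)\lambda^k(r-\rho)$. Feeding the consecutive pair $(\rho_k, \rho_{k+1})$ into the hypothesis and using the monotonicity of $g, h_1, h_2, h_3$ to replace their argument by $r$ yields the pointwise recursion
\[ f(\rho_k) \le \delta\, f(\rho_{k+1}) + C_\delta\, b_k, \qquad b_k := g(r) + \sum_{j=1}^{3} \frac{h_j(r)}{\bigl((1-\lambda)\lambda^k(r-\rho)\bigr)^{\alpha_j}}, \]
with exponents $(\alpha_1, \alpha_2, \alpha_3) = (4/3, 2, 4)$.

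Iterating $N$ steps produces $f(\rho) \le \delta^N f(\rho_N) + C_\delta \sum_{k=0}^{N-1} \delta^k b_k$. Since $f \in C^1([0,1])$ is bounded on $[0,1]$, the tail $\delta^N f(\rho_N)$ vanishes as $N \to \infty$, and one is left with three geometric series of ratios $\delta/\lambda^{\alpha_j}$ for $j = 1, 2, 3$. The choice $\lambda = 1/2$ is essentially forced by the conclusion: it turns the ratios into $2^{\alpha_j}\delta \in \{2^{4/3}\delta,\, 4\delta,\, 16\delta\}$, and moreover the prefactor $(1-\lambda)^{-\alpha_j} = 2^{\alpha_j}$ combines with the geometric series to produce exactly the constants $2^{\alpha_j}/(1 - 2^{\alpha_j}\delta)$ appearing in \eqref{est.appendix.lem.itr.}, together with $1/(1-\delta)$ for the $g$ contribution. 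All three geometric series converge precisely when $16\delta < 1$, which is where the assumption $\delta < 1/16$ is consumed; the $h_3$ term, with the worst exponent, provides the binding constraint.

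There is no substantive obstacle beyond this bookkeeping: the argument is a single-parameter iteration in which the ratio $\lambda = 1/2$ is dictated by matching the explicit constants of the conclusion, the $C^1$ hypothesis on $f$ is invoked only to ensure $\sup_{[0,1]} f < \infty$ so that the remainder $\delta^N f(\rho_N)$ decays, and monotonicity of the other data is used solely to justify the replacements $g(\rho_{k+1}), h_j(\rho_{k+1}) \le g(r), h_j(r)$. The one point worth verifying with care is that the prefactor $(1-\lambda)^{-\alpha_j} = 2^{\alpha_j}$ and the $k=0$ term of the series conspire to yield precisely the numerators $2^{4/3}, 4, 16$ in \eqref{est.appendix.lem.itr.}, but this is a direct computation.
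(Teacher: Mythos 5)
Your proposal is correct and follows essentially the same route as the paper: your sequence $\rho_k=r-\lambda^k(r-\rho)$ with $\lambda=\tfrac12$ is exactly the paper's dyadic sequence $a_n=r-2^{-n}(r-\rho)$ (gaps $(r-\rho)/2^{n}$), and both arguments iterate the hypothesis, kill the remainder $\delta^N f(\rho_N)$ by boundedness of $f$, and sum the same three geometric series with ratios $2^{4/3}\delta$, $4\delta$, $16\delta$, the last one consuming $\delta<\tfrac1{16}$. The constants you obtain match \eqref{est.appendix.lem.itr.} exactly, so no further changes are needed.
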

%
\begin{proof}
Let us take a sequence $\{a_n\}_{n=0}^{\infty}$ in $[0,1]$ such that 
\begin{align*}
a_0 = \rho\,,\qquad a_n-a_{n-1} = \frac{r-\rho}{2^n}\,, \quad n\in\N\,.
\end{align*}
Then we see that $\lim_{n\to\infty} a_n = r$ and have recursively for $n\in\N$,
\begin{align}\label{est1.appendix.proof.lem.itr.}
\begin{split}
f(a_0) 
& \le \delta f(a_1) 
+ C_\delta \Big(g(a_1) 
+ \frac{h_1(a_1)}{(a_1-a_0)^\frac43} 
+ \frac{h_2(a_1)}{(a_1-a_0)^2} 
+ \frac{h_3(a_1)}{(a_1-a_0)^4} \Big) \\
& \le 
\delta \bigg(\delta f(a_2) 
+ C_\delta \Big( g(a_2) 
+ \frac{h_1(a_2)}{(a_2-a_1)^\frac43} 
+ \frac{h_2(a_2)}{(a_2-a_1)^2} 
+ \frac{h_3(a_2)}{(a_2-a_1)^4} \Big) \bigg) \\
& \quad
+ C_\delta \Big(g(a_1) 
+ \frac{h_1(a_1)}{(a_1-a_0)^\frac43} 
+ \frac{h_2(a_1)}{(a_1-a_0)^2}  
+ \frac{h_3(a_1)}{(a_1-a_0)^4} \Big) \\
& \le \ldots \\
& \le \delta^n f(a_n) 
+ C_\delta 
\big( g(a_1) + \delta g(a_2) + \cdots + \delta^{n-1} g(a_n) \big) \\
& \quad
+ C_\delta
\Big( 
\frac{h_1(a_1)}{(a_1-a_0)^\frac43} + \delta \frac{h_1(a_2)}{(a_2-a_1)^\frac43}
+ \cdots + \delta^{n-1} \frac{h_1(a_n)}{(a_n-a_{n-1})^\frac43} 
\Big) \\
& \quad
+ C_\delta
\Big( 
\frac{h_2(a_1)}{(a_1-a_0)^2} + \delta \frac{h_2(a_2)}{(a_2-a_1)^2} 
+ \cdots + \delta^{n-1} \frac{h_2(a_n)}{(a_n-a_{n-1})^2} 
\Big) \\
& \quad
+ C_\delta
\Big( 
\frac{h_3(a_1)}{(a_1-a_0)^4} + \delta \frac{h_3(a_2)}{(a_2-a_1)^4}
+ \cdots + \delta^{n-1} \frac{h_3(a_n)}{(a_n-a_{n-1})^4} 
\Big) \,.
\end{split}
\end{align}
Then we find
\begin{align}\label{est2.appendix.proof.lem.itr.}
\varlimsup_{n \to \infty} \delta^n f(a_n) = 0\,.
\end{align}
On the other hand, since $g(x)$ is monotone increasing on $[0,1]$, we see that
\begin{align}\label{est3.appendix.proof.lem.itr.}
g(a_1) + \delta g(a_2) + \cdots + \delta^{n-1} g(a_n) 
& \le 
\frac{g(r)}{1-\delta}
\end{align}
uniformly in $n\in\N$. By the same reason as above we have
\begin{align}\label{est4.appendix.proof.lem.itr.}
\begin{split}
& \frac{h_1(a_1)}{(a_1-a_0)^\frac43} + \delta \frac{h_1(a_2)}{(a_2-a_1)^\frac43} 
+ \cdots + \delta^{n-1} \frac{h_1(a_n)}{(a_n-a_{n-1})^\frac43} \\
& \le
2^\frac43 \big(1 + 2^\frac43 \delta + \cdots + (2^\frac43 \delta)^{n-1}\big) \frac{h_1(r)}{(r-\rho)^\frac43} \\
& \le
\frac{2^\frac43}{1-2^\frac43 \delta} \frac{h_1(r)}{(r-\rho)^\frac43}\,.
\end{split}
\end{align}
We also have
\begin{align}
& \frac{h_2(a_1)}{(a_1-a_0)^2} + \delta \frac{h_2(a_2)}{(a_2-a_1)^2} 
+ \cdots + \delta^{n-1} \frac{h_2(a_n)}{(a_n-a_{n-1})^2} 
\le \frac{4}{1-4\delta} \frac{h_2(r)}{(r-\rho)^2}\,, \label{est5.appendix.proof.lem.itr.} \\
& \frac{h_3(a_1)}{(a_1-a_0)^4} + \delta \frac{h_3(a_2)}{(a_2-a_1)^4}
+ \cdots + \delta^{n-1} \frac{h_3(a_n)}{(a_n-a_{n-1})^4}
\le \frac{16}{1-16\delta} \frac{h_3(r)}{(r-\rho)^4}\,. \label{est6.appendix.proof.lem.itr.}
\end{align}
Then \eqref{est.appendix.lem.itr.} is proved by inserting \eqref{est3.appendix.proof.lem.itr.}--\eqref{est6.appendix.proof.lem.itr.} to \eqref{est1.appendix.proof.lem.itr.} and using \eqref{est2.appendix.proof.lem.itr.}. The proof is complete.
\end{proof}
%
\noindent 
We establish the Caccioppoli inequality to the modified Navier-Stokes equations
\begin{equation}\label{appendix.modified.NS.ep}
\left\{
\begin{array}{ll}
-\Delta u^\ep+\nabla p^\ep
= -\nabla\cdot(b^\ep\otimes u^\ep + u^\ep\otimes b^\ep)
- \lambda^\ep (u^\ep\cdot\nabla u^\ep)
+ \nabla\cdot F^\ep&\mbox{in}\ B^\ep_{1,+}(0)\\
\nabla\cdot u^\ep=0&\mbox{in}\ B^\ep_{1,+}(0)\\
u^\ep=0&\mbox{on}\ \Gamma^\ep_1(0)\,.
\end{array}
\right.
\end{equation}
Note that the Stokes equations can be obtained by setting $b^\ep=0$ and $\lambda^\ep=0$.
%
\begin{lemma}\label{appendix.lem.Caccioppoli.ineq.}
Let $\ep\in[0,1)$, $b^\ep \in H^1(B^\ep_{1,+}(0))^3$ with $b^\ep=0$ on $\Gamma^\ep_1(0)$, $\lambda^\ep\in[0,1]$, and $F^\ep\in L^2(B^\ep_{1,+}(0))^{3\times3}$, and let $u^\ep\in H^1(B^\ep_{1,+}(0))^3$ be a weak solution to \eqref{appendix.modified.NS.ep}. Then we have for all $0<\rho<r\le1$,
\begin{align}\label{est.appendix.lem.Caccioppoli.ineq.}
\begin{split}
& \|\nabla u^\ep\|_{L^2(B^\ep_{\rho,+}(0))}^2 \\
& \le 
K_0\bigg(
\frac{1}{(r-\rho)^2} \|u^\ep\|_{L^2(B^\ep_{r,+}(0))}^2 \\
& \qquad\quad
+ \Big(
\|\nabla b^\ep\|_{L^2(B^\ep_{r,+}(0))}^4 
+ \frac{\|\nabla b^\ep\|_{L^2(B^\ep_{r,+}(0))}^\frac43}{(r-\rho)^\frac43}
\Big)
\|u^\ep\|_{L^2(B^\ep_{r,+}(0))}^2 \\
& \qquad\quad
+ \frac{(\lambda^\ep)^4}{(r-\rho)^4} \|u^\ep\|_{L^2(B^\ep_{r,+}(0))}^6
+ \|F^\ep\|_{L^2(B^\ep_{r,+}(0))}^2
\bigg)\,,
\end{split}
\end{align}
where the constant $K_0$ depends only on $\|\gamma\|_{W^{1,\infty}(\R^2)}$. In particular it is independent of $\ep$, $b^\ep$, $\lambda^\ep$, $\rho$, and $r$.
\end{lemma}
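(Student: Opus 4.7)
The plan is a Caccioppoli-type energy identity followed by the absorption iteration of Lemma~\ref{appendix.lem.itr.}. Fix $0<\rho<r\le 1$ and pick a cut-off $\varphi\in C^\infty(\R^3)$ with $\varphi\equiv 1$ on $B^\ep_{\rho,+}(0)$, $\supp\varphi\subset\overline{B^\ep_{r,+}(0)}\cup\{x_3\le\ep\gamma(x'/\ep)\}$, and $|\nabla\varphi|\le C(r-\rho)^{-1}$. Since $\varphi^2 u^\ep$ is not solenoidal, correct it by a Bogovskii lift $w=\mathbb{B}[\nabla\varphi^2\cdot u^\ep]$ on $B^\ep_{r,+}(0)$. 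Flattening the bumpy boundary through the bi-Lipschitz change of variables $(x',x_3)\mapsto(x',x_3-\ep\gamma(x'/\ep))$ reduces the construction to Bogovskii on the flat box $(-r,r)^2\times(0,r)$, so the resulting operator norm depends only on $\|\gamma\|_{W^{1,\infty}(\R^2)}$ and $\|\nabla w\|_{L^2}\le C(r-\rho)^{-1}\|u^\ep\|_{L^2(B^\ep_{r,+})}$ uniformly in $\ep$. Then $\phi=\varphi^2 u^\ep-w$ is an admissible divergence-free test function vanishing on $\Gamma^\ep_1(0)$ and on the lateral/top boundary of $B^\ep_{r,+}(0)$.

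Testing \eqref{appendix.modified.NS.ep} against $\phi$, the principal part gives
\begin{align*}
\int\varphi^2|\nabla u^\ep|^2
&=-2\int\varphi(u^\ep\otimes\nabla\varphi):\nabla u^\ep+\int\nabla u^\ep:\nabla w\\
&\quad+\int(b^\ep\otimes u^\ep+u^\ep\otimes b^\ep):\nabla\phi\\
&\quad-\lambda^\ep\int(u^\ep\cdot\nabla u^\ep)\cdot\phi-\int F^\ep:\nabla\phi.
\end{align*}
The first line together with the $F^\ep$ contribution is classical: Cauchy-Schwarz and Young with a small parameter $\eta$ deliver $\eta\int\varphi^2|\nabla u^\ep|^2+C_\eta(r-\rho)^{-2}\|u^\ep\|_{L^2(B^\ep_{r,+})}^2+C_\eta\|F^\ep\|_{L^2(B^\ep_{r,+})}^2$.

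For the $b^\ep$-term I combine H\"older in the $(6,3,2)$ pattern with the 3D Sobolev embedding $\|b^\ep\|_{L^6(B^\ep_{r,+})}\le C\|\nabla b^\ep\|_{L^2(B^\ep_{r,+})}$ (valid by Poincar\'e since $b^\ep|_{\Gamma^\ep_1}=0$, with constant depending only on $\|\gamma\|_{W^{1,\infty}}$) and the interpolation $\|u^\ep\|_{L^3}\le C\|u^\ep\|_{L^2}^{1/2}\|\nabla u^\ep\|_{L^2}^{1/2}$, obtaining $|\int(b^\ep\otimes u^\ep+u^\ep\otimes b^\ep):\nabla\phi|\le C\|\nabla b^\ep\|\|u^\ep\|^{1/2}\|\nabla u^\ep\|^{1/2}(\|\varphi\nabla u^\ep\|+(r-\rho)^{-1}\|u^\ep\|)$, all $L^2$ norms being taken over $B^\ep_{r,+}$. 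Two successive Young inequalities -- first with exponents $(2,2)$ to split off $\|\varphi\nabla u^\ep\|$, then with $(4/3,4)$ to absorb the remaining $\|\nabla u^\ep\|^{1/2}$ -- produce $\eta(\|\varphi\nabla u^\ep\|^2+\|\nabla u^\ep\|_{L^2(B^\ep_{r,+})}^2)+C_\eta(\|\nabla b^\ep\|^4+(r-\rho)^{-4/3}\|\nabla b^\ep\|^{4/3})\|u^\ep\|^2$. For the nonlinear term, the divergence-free condition gives $-\lambda^\ep\int(u^\ep\cdot\nabla u^\ep)\cdot(\varphi^2 u^\ep)=\tfrac{\lambda^\ep}{2}\int(u^\ep\cdot\nabla\varphi^2)|u^\ep|^2$, which is bounded via $\|u^\ep\|_{L^3}\le C\|u^\ep\|_{L^2}^{1/2}\|u^\ep\|_{L^6}^{1/2}$ and Sobolev by $C\lambda^\ep(r-\rho)^{-1}\|u^\ep\|_{L^2}^{3/2}(\|\nabla u^\ep\|_{L^2}+\|u^\ep\|_{L^2})^{3/2}$; Young with exponents $(4/3,4)$ yields $\eta\|\nabla u^\ep\|_{L^2(B^\ep_{r,+})}^2+C_\eta(\lambda^\ep)^4(r-\rho)^{-4}\|u^\ep\|_{L^2}^6$ (the residual $\eta\|u^\ep\|^2$ is swallowed by the $(r-\rho)^{-2}\|u^\ep\|^2$ contribution), and the $w$-piece is treated analogously using $\|w\|_{L^6}\le C\|\nabla w\|_{L^2}$.

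Collecting these bounds and choosing $\eta$ small enough to absorb the $\eta\int\varphi^2|\nabla u^\ep|^2$ terms on the right, and noting that the residual $\eta\|\nabla u^\ep\|_{L^2(B^\ep_{r,+})}^2$ contributions yield precisely the $\delta f(r)$ term required by Lemma~\ref{appendix.lem.itr.} with $f(r)=\|\nabla u^\ep\|_{L^2(B^\ep_{r,+})}^2$, one arrives at an inequality
\begin{align*}
f(\rho)\le\delta f(r)+C_\delta\Big(g(r)+\frac{h_1(r)}{(r-\rho)^{4/3}}+\frac{h_2(r)}{(r-\rho)^2}+\frac{h_3(r)}{(r-\rho)^4}\Big),
\end{align*}
with $g(r)=\|F^\ep\|_{L^2(B^\ep_{r,+})}^2+\|\nabla b^\ep\|_{L^2(B^\ep_{r,+})}^4\|u^\ep\|_{L^2(B^\ep_{r,+})}^2$, $h_1(r)=\|\nabla b^\ep\|_{L^2(B^\ep_{r,+})}^{4/3}\|u^\ep\|_{L^2(B^\ep_{r,+})}^2$, $h_2(r)=\|u^\ep\|_{L^2(B^\ep_{r,+})}^2$, and $h_3(r)=(\lambda^\ep)^4\|u^\ep\|_{L^2(B^\ep_{r,+})}^6$. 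Applying Lemma~\ref{appendix.lem.itr.} gives exactly \eqref{est.appendix.lem.Caccioppoli.ineq.}. The main obstacle will be the uniform-in-$\ep$ control of both the Bogovskii operator and the Sobolev-Poincar\'e constants on the Lipschitz domain $B^\ep_{r,+}(0)$; both are reduced to constants on the flat box $(-r,r)^2\times(0,r)$ through the graph-flattening diffeomorphism, whose bi-Lipschitz norm is governed solely by $\|\gamma\|_{W^{1,\infty}(\R^2)}$.
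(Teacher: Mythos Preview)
Your argument is correct and arrives at the same conclusion via the same absorption scheme of Lemma~\ref{appendix.lem.itr.}, but your handling of the pressure differs from the paper's. The paper tests the equation against the non-solenoidal function $u^\ep\varphi^2$ directly; the pressure term then survives as $-2\int(p^\ep-[p^\ep]_r)(u^\ep\cdot\nabla\varphi)\varphi$, and is controlled by the Ne\v{c}as--Lions estimate $\|p^\ep-[p^\ep]_r\|_{L^2(B^\ep_{r,+})}\le C\|\nabla p^\ep\|_{W^{-1,2}(B^\ep_{r,+})}$, the right side being rewritten through the equation as $\|\nabla u^\ep\|_{L^2}+\|b^\ep\otimes u^\ep+u^\ep\otimes b^\ep\|_{L^2}+\lambda^\ep\|u^\ep\|_{L^4}^2+\|F^\ep\|_{L^2}$. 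You instead correct $\varphi^2 u^\ep$ by a Bogovski\u{\i} lift to obtain a solenoidal test function and avoid the pressure altogether. The two devices are dual to each other (Bogovski\u{\i} is precisely what underlies the Ne\v{c}as inequality), and both require the same uniform-in-$\ep$ domain constant on $B^\ep_{r,+}(0)$, which you correctly extract via the graph-flattening map. Your route is slightly cleaner from the point of view of the weak formulation~\eqref{intro.def2}, since it only ever pairs the equation against divergence-free test functions; the paper's route is more direct once one has invoked the existence of $p^\ep\in L^2$. The H\"older--Sobolev--Young bookkeeping for the $b^\ep$, nonlinear, and $F^\ep$ terms is identical in both proofs.
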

%
\begin{proof}
\emph{In this proof the norm $\|\cdot\|_{L^p(B^\ep_{r,+}(0))}$ is denoted by $\|\cdot\|_{L^p}$ for simplicity.} Fix $0<\rho<r\le1$. We extend $u^\ep$ and $b^\ep$ by zero below the boundary, which are respectively denoted by $u^\ep$ and $b^\ep$ again, and see that $u^\ep$ and $b^\ep$ are in $H^1(B_1(0))^3$. By taking a cut-off function such that
\begin{align*}
\supp\varphi\subset B_r(0)\,, 
\qquad \varphi(x)=1 \ \, {\rm in} \ \, \overline{B_r(0)}\,,
\qquad \|\nabla \varphi\|_{L^\infty(B_{1}(0))} \le \frac{C}{r-\rho}\,,
\end{align*}
we test $u^\ep \varphi^2$ against the equations \eqref{appendix.modified.NS.ep}. Then we find that
\begin{align}\label{eq1.proof.appendix.lem.Caccioppoli.ineq.}
\begin{split}
\int_{B^\ep_{1,+}(0)}
(\nabla\cdot F^\ep) \cdot u^\ep \varphi^2
& = 
-\int_{B^\ep_{1,+}(0)} \Delta u^\ep \cdot u^\ep \varphi^2
+ \int_{B^\ep_{1,+}(0)} \nabla (p^\ep - [p^\ep]_r ) \cdot u^\ep \varphi^2 \\
& \quad
+ \int_{B^\ep_{1,+}(0)} 
\big(\nabla\cdot(b^\ep\otimes u^\ep + u^\ep\otimes b^\ep)\big)\cdot u^\ep \varphi^2 \\
& \quad
+ \lambda \int_{B^\ep_{1,+}(0)}
(u^\ep\cdot\nabla u^\ep) \cdot u^\ep \varphi^2\,,
\end{split}
\end{align}
where we have set $[p^\ep]_r = (\overline{p^\ep})_{B^\ep_{r,+}(0)}$. Then, by integration by parts and $\nabla\cdot u^\ep=0$, one has
\begin{align}\label{eq2.proof.appendix.lem.Caccioppoli.ineq.}
\begin{split}
& -\int_{B^\ep_{1,+}(0)} \Delta u^\ep \cdot u^\ep \varphi^2
+ \int_{B^\ep_{1,+}(0)} \nabla (p^\ep - [p^\ep]_r) \cdot u^\ep \varphi^2 \\
& = 
\int_{B^\ep_{1,+}(0)} |\nabla u^\ep|^2 \varphi^2
+ 2 \int_{B^\ep_{1,+}(0)} \nabla u^\ep \cdot (u^\ep \nabla \varphi) \varphi 
- 2 \int_{B^\ep_{1,+}(0)} (p^\ep - [p^\ep]_r) (u^\ep\cdot\nabla \varphi) \varphi
\end{split}
\end{align}
and
\begin{align}\label{eq3.proof.appendix.lem.Caccioppoli.ineq.}
\begin{split}
& \int_{B^\ep_{1,+}(0)} 
\big(\nabla\cdot(b^\ep\otimes u^\ep + u^\ep\otimes b^\ep)\big)\cdot u^\ep \varphi^2 \\
& = 
-\int_{B^\ep_{1,+}(0)} 
(b^\ep\otimes u^\ep + u^\ep\otimes b^\ep) \cdot (\nabla u^\ep) \varphi^2 \\
&\quad
-2\int_{B^\ep_{1,+}(0)} 
(b^\ep\otimes u^\ep + u^\ep\otimes b^\ep) \cdot (u^\ep \nabla \varphi) \varphi\,.
\end{split}
\end{align}
The nonlinearity is calculated as
\begin{align}\label{eq4.proof.appendix.lem.Caccioppoli.ineq.}
\begin{split}
\int_{B^\ep_{1,+}(0)}
(u^\ep\cdot\nabla u^\ep) \cdot u^\ep \varphi^2
& = \frac12 \int_{B^\ep_{1,+}(0)} \nabla|u^\ep|^2 \cdot u^\ep \varphi^2 \\
& = -\int_{B^\ep_{1,+}(0)} |u^\ep|^2 (u^\ep\cdot\nabla \varphi) \varphi\,.
\end{split}
\end{align}
By using $u^\ep\cdot\nabla u^\ep=\nabla\cdot(u^\ep\otimes u^\ep)$, the pressure $\|p^\ep - [p^\ep]_r \|_{L^2}$ is estimated as
\begin{align}\label{est1.proof.appendix.lem.Caccioppoli.ineq.}
\begin{split}
\|p^\ep - [p^\ep]_r \|_{L^2}
& \le
C \|\nabla p^\ep\|_{W^{-1,2}(B^\ep_{r,+}(0))} \\
& = C 
\|\nabla\cdot \big(\nabla u^\ep 
- (b^\ep\otimes u^\ep + u^\ep\otimes b^\ep)
- \lambda u^\ep\otimes u^\ep + F^\ep \big)\|_{W^{-1,2}(B^\ep_{r,+}(0))} \\
& \le C 
\big( \|\nabla u^\ep\|_{L^2} 
+ \|b^\ep\otimes u^\ep + u^\ep\otimes b^\ep\|_{L^2} 
+ \lambda \|u^\ep\|_{L^4}^2 + \|F^\ep\|_{L^2} \big)\,.
\end{split}
\end{align}
After inserting \eqref{eq2.proof.appendix.lem.Caccioppoli.ineq.}--\eqref{eq4.proof.appendix.lem.Caccioppoli.ineq.} to \eqref{eq1.proof.appendix.lem.Caccioppoli.ineq.} and using \eqref{est1.proof.appendix.lem.Caccioppoli.ineq.}, we see that from the H\"older inequality,
\begin{align}\label{est2.proof.appendix.lem.Caccioppoli.ineq.}
\begin{split}
\int_{B^\ep_{\rho,+}(0)} |\nabla u^\ep|^2 \varphi^2 
& \le 
\frac{C}{r-\rho} \|u^\ep\|_{L^2} \|\nabla u^\ep\|_{L^2} \\
& \quad
+ C \|b^\ep\otimes u^\ep + u^\ep\otimes b^\ep\|_{L^2}
\Big( \|\nabla u^\ep\|_{L^2} 
+ \frac{\|u^\ep\|_{L^2}}{r-\rho} \Big) \\
& \quad
+ \frac{C\lambda^\ep}{r-\rho}
\|u^\ep\|_{L^4}^2 \|u^\ep\|_{L^2} 
+ C \|F^\ep\|_{L^2}
\Big( \|\nabla u^\ep\|_{L^2} + \frac{\|u^\ep\|_{L^2}}{r-\rho} \Big)\,,
\end{split}
\end{align}
where $C$ depends only on $\|\gamma\|_{W^{1,\infty}(\R^2)}$. By the H\"older and Sobolev inequalities we have
\begin{align}\label{est3.proof.appendix.lem.Caccioppoli.ineq.}
\begin{split}
\|b^\ep\otimes u^\ep + u^\ep\otimes b^\ep\|_{L^2}
& \le \|b^\ep\|_{L^6} \|u^\ep\|_{L^3} \\
& \le \|b^\ep\|_{L^6} \|u^\ep\|_{L^2}^\frac12 \|u^\ep\|_{L^6}^\frac12 \\
& \le C \|\nabla b^\ep\|_{L^2(B^\ep_{1,+}(0))} \|u^\ep\|_{L^2}^\frac12 \|\nabla u^\ep\|_{L^2}^\frac12
\end{split}
\end{align}
and
\begin{align}\label{est4.proof.appendix.lem.Caccioppoli.ineq.}
\begin{split}
\|u^\ep\|_{L^4}^2 
& \le \|u^\ep\|_{L^2}^\frac12 \|u^\ep\|_{L^6}^\frac32 \\
& \le C \|u^\ep\|_{L^2}^\frac12 \|\nabla u^\ep\|_{L^2}^\frac32\,.
\end{split}
\end{align}
Therefore, by inserting \eqref{est3.proof.appendix.lem.Caccioppoli.ineq.} and \eqref{est4.proof.appendix.lem.Caccioppoli.ineq.} into \eqref{est2.proof.appendix.lem.Caccioppoli.ineq.},  we find
\begin{align*}
\begin{split}
\|\nabla u^\ep\|_{L^2(B^\ep_{\rho,+}(0))}^2 
& \le 
\frac{C}{r-\rho} \|\nabla u^\ep\|_{L^2} \|u^\ep\|_{L^2} \\
& \quad
+ C \|\nabla b^\ep\|_{L^2(B^\ep_{1,+}(0))} 
\Big(
\|u^\ep\|_{L^2}^\frac12 \|\nabla u^\ep\|_{L^2}^\frac32
+ \frac{\|u^\ep\|_{L^2}^\frac32 \|\nabla u^\ep\|_{L^2}^\frac12}{r-\rho} 
\Big) \\
& \quad
+ \frac{C\lambda^\ep}{r-\rho}
\|u^\ep\|_{L^2}^\frac32 \|\nabla u^\ep\|_{L^2}^\frac32
+ C \|F^\ep\|_{L^2(B^\ep_{1,+}(0))}
\Big( \|\nabla u^\ep\|_{L^2} + \frac{\|u^\ep\|_{L^2}}{r-\rho} \Big)\,,
\end{split}
\end{align*}
and thus by applying the Young inequality, we have for any $\delta\in(0,\frac{1}{16})$,
\begin{align*}
\begin{split}
& \|\nabla u^\ep\|_{L^2(B^\ep_{\rho,+}(0))}^2 \\
& \le 
\delta \|\nabla u^\ep\|_{L^2}^2
+ C_\delta 
\Big( 
\frac{\|u^\ep\|_{L^2}^2}{(r-\rho)^2} 
+ \|\nabla b^\ep\|_{L^2(B^\ep_{1,+}(0))}^4 \|u^\ep\|_{L^2}^2
+ \frac{\|\nabla b^\ep\|_{L^2(B^\ep_{1,+}(0))}^\frac43}{(r-\rho)^\frac43}
\|u^\ep\|_{L^2}^2 \\
& \qquad\qquad\qquad\qquad \ 
+ \frac{(\lambda^\ep)^4}{(r-\rho)^4} \|u^\ep\|_{L^2}^6
+ \|F^\ep\|_{L^2(B^\ep_{1,+}(0))}^2
\Big)\,,
\end{split}
\end{align*}
where $C_\delta$ depends on $\delta$. The assertion \eqref{est.appendix.lem.Caccioppoli.ineq.} follows from Lemma \ref{appendix.lem.itr.}. This completes the proof.
\end{proof}
%

\subsection*{Acknowledgement}
M. H. is grateful to Universit\'e de Bordeaux for their kind hospitality during his stay as a postdoctoral researcher in the spring semester of 2019.

\subsection*{Funding and conflict of interest.} 
The authors acknow\-ledge financial support from the IDEX of the University of Bordeaux for the BOLIDE project. The second author is also partially supported by the project BORDS grant ANR-16-CE40-0027-01 and by the project SingFlows grant ANR-18-CE40-0027 of the French National Research Agency (ANR). 
The authors declare that they have no conflict of interest. 

\small
\bibliographystyle{abbrv}
\bibliography{nsbumpy}

\end{document}